\newtheorem{thm}{Theorem}[section]
\newtheorem{cor}[thm]{Corollary}
\newtheorem{rem}[thm]{Remark}
\newtheorem{lem}[thm]{Lemma}
\newtheorem{prop}[thm]{Proposition}
\newtheorem{defn}[thm]{Definition}
\numberwithin{equation}{section}
\newcommand{\al}{\alpha}
\newcommand{\De}{\Delta}
\newcommand{\ep}{\varepsilon}
\newcommand{\si}{\sigma}
\newcommand{\om}{\omega}
\newcommand{\Om}{\Omega}
\newcommand{\Ga}{\Gamma}
\renewcommand{\P}{\mathcal{P}}
\newcommand{\U}{\mathbb{S}}
\newcommand{\Real}{\mathbb{R}}
\newcommand{\norm}[1]{\Vert#1\Vert}
\def\<{\left\langle} \def\>{\right\rangle}
\def\({\left(} \def\){\right)}
\newcommand{\n}{\nabla}
\newcommand{\p}{\partial}
\subjclass[2020]{Primary 35G61, 35Q55, 35Q60, 58J35}
\keywords{Local smooth solutions, Global smooth solutions, The initial-Neumann boundary value problem, The Schr\"{o}dinger flow}
\begin{document}
\title{Smooth solutions to the Schr\"odinger flow for maps from smooth bounded domains in Euclidean spaces into $\U^2$}
\thanks{*Corresponding Author}
\author{Bo Chen}
\address{School of Mathematics, South China University of Technology, Guangzhou,510640, China}
\email{cbmath@scut.edu.cn}
	
\author{Youde Wang*}
\address{1. School of Mathematics and Information Sciences, Guangzhou University;
2. Hua Loo-Keng Key Laboratory of Mathematics, Institute of Mathematics, AMSS, and School of
Mathematical Sciences, UCAS, Beijing 100190, China.}
\email{wyd@math.ac.cn}

\begin{abstract}
The results of this paper are twofold. First, we establish the local existence and uniqueness of very regular or smooth solutions to the initial-Neumann boundary value problem of the Schr\"{o}dinger flow for maps from a smooth bounded domain $\Om\subset \Real^m$ with $m=1,2,3$ into $\U^2$ in the scale of Sobolev spaces. In this part, we also provide a precise description of the compatibility conditions required at the boundary for the initial data. Second, we further prove that the local smooth solution obtained for the initial-Neumann boundary value problem of the 1-dimensional
Schr\"odinger flow can be extended to a global smooth one.
\end{abstract}

\maketitle
\section{Introduction}
In this paper, we are concerned with the existence and uniqueness for very regular or smooth solutions to the following initial-Neumann boundary value problem of the Schr\"odinger flow
\begin{equation*}
	\begin{cases}
		\p_tu =u\times\De u,\quad\quad&\text{(x,t)}\in\Om\times \Real^+,\\[1ex]
		\frac{\p u}{\p \nu}=0, &\text{(x,t)}\in\p\Om\times \Real^+,\\[1ex]
		u(x,0)=u_0: \Om\to \U^2,
	\end{cases}
\end{equation*}
where $\Om\subset\mathbb{R}^m$ with $m=1,2,3$ is a smooth bounded domain, and $u$ is a map from $\Om$ into a standard sphere $\U^2$. More precisely, we aim to identify the compatibility conditions and regularity requirements on the initial map $u_0:\Om\to\U^2$  that ensure the existence of a unique smooth (or sufficiently regular) solution to the above problem. The present paper can be regarded as the sequel of our recent paper \cite{CW1}, where we established the local well-posedness of strong solutions to the Schr\"odinger flow (also known as the Landau-Lifshitz equation) on a smooth bounded domain associated with natural boundary conditions.

A long-standing and challenging question in this area is whether the Schrödinger flow (Landau-Lifshitz equation) with natural boundary conditions admits regular or smooth solutions. To the best of our knowledge, there are very few well-posedness results for such smooth or regular solutions in the existing literature when $\dim(\Omega)\geq 2$, where $\dim(\Omega)$ denotes the dimension of the domain $\Omega$.

\subsection{Definitions and Background}
In physics, for a map $u$ from $\Om$ into a standard sphere $\U^2$, the Landau-Lifshtiz (LL) equation
\begin{equation}\label{LL-eq0}
\p_tu =-u\times\De u
\end{equation}
is a fundamental evolution equation for the ferromagnetic spin chain and was proposed on the phenomenological ground in studying the dispersive theory of magnetization of ferromagnets. It was first deduced by Landau and Lifshitz in \cite{LL}, and then proposed by Gilbert in \cite{G} with dissipation as the following form
\begin{equation}\label{eq-LL}
\p_tu=-\al u\times (u\times \De u)+\beta u\times\De u,	
\end{equation}
where $\beta$ is a real number and $\al \geq 0$ is called the Gilbert damping coefficient. Hence, the above equation \eqref{eq-LL} is also called Landau-Lifshitz-Gilbert (LLG) equaion. Here ``$\times$" denotes the cross product in $\Real^{3}$ and $\De$ is the Laplace operator in $\Real^{3}$.

Since the negative sign ``$-$'' in equation \eqref{LL-eq0} does not affect on our analysis and main results, for the sake of convenience, we only consider the classical Schr\"{o}dinger flow into $\U^2$
\[\p_tu =u\times\De u.\]

\medskip
Intrinsically, ``$u\times$" can be considered as a complex structure
\[J(u)=u\times: T_{u}\U^2\to T_{u}\U^2\]
on $\U^2$, which rotates vectors on the tangent space of $\U^2$ anticlockwise by an angle of $\frac{\pi}{2}$ degrees. Therefore, we can write the above equation as
\[\p_tu = J(u)(\De u + |\nabla u|^2u).\]

From the viewpoint of infinite dimensional symplectic geometry, Ding and Wang \cite{DW} proposed to consider the Schr\"odinger flows for maps from a Riemannian manifold into a symplectic manifold, which can be regarded as an extension of LL equation \eqref{LL-eq0} and was also independently introduced by Terng and Uhlenbeck in \cite{Uh1}. Namely, suppose $(M,g)$ is a Riemannian manifold and $(N,J,\om)$ is a symplectic manifold, the Schr\"odinger flow is a time-dependent map $u:M\times \Real^+\to N\hookrightarrow \Real^{K}$ satisfying
\[\p_t u=J(u)\tau(u).\]
Here $\tau(u)$ is the tension field of $u$ which has the extrinsic form
$$\tau(u)=\De_g u+A(u)(\n u,\n u),$$
where $A(u)(\cdot, \cdot)$ is the second fundamental form of $N$ in $\Real^{K}$.

\medskip
The PDE aspects of the Schr\"odinger flow containing the existence, uniqueness and regularities of various kinds of solutions, have been intensively studied in the last two decades. Next, we briefly recall a few of results that are closely related to our work in the present paper.

\medskip
In 1986, P.L. Sulem, C. Sulem and C. Bardos in \cite{SSB} proved the existence of global weak solutions and local regular solutions to the Schr\"{o}dinger flow for maps from $\Real^n$ into $\U^2$, by employing difference method. In 1998, Y.D. Wang \cite{W} adopted a geometric approximation method (i.e. the complex structure approximation method) to obtain the global existence of weak solutions to the Schr\"odinger flow for maps from a closed Riemannian manifold or a bounded domain in $\Real^n$ into $\U^2$. Later, A. Nahmod, J. Shatah, L. Vega and C.C. Zeng \cite{NSVZ} established the existence of global weak solutions to the Schr\"odinger flow from $\Real^2$ into $\mathbb{H}^2$. For recent development of weak solutions to a class of generalized Schr\"odinger flows and related equations, we refer to \cite{CW0,JW1,JW2} and references therein for various results.

\medskip
The local regular solution to the Schr\"odinger flow from a closed Riemanian manifold or $\Real^n$ into a K\"ahler manifold was established by Ding and the second named author of this paper in \cite{DW,DW1} by employing a parabolic geometric approximation equation and estimating some intrinsic geometric energy picked suitably. Furthermore, they also obtained the persistence of regularity results, in that the solution always stays as regular as the initial data (as measured in Sobolev norms), provided that one is within the time of existence guaranteed by the local existence theorem.

For low-regularity initial data, the initial value problem for Schr\"odinger flow from an Euclidean space into $\mathbb{S}^2$ has been studied indirectly using the ``modified Schr\"odinger map equations" and certain enhanced energy methods, for instance, A.R. Nahmod, A. Stefanov and K. K. Uhlenbeck \cite{NSU} have ever used the standard technique of Picard iteration in some suitable function spaces of the Schr\"odinger equation to obtain a near-optimal (but conditional) local well-posedness result for the Schr\"odinger map flow equation from two dimensions into the sphere $X = \mathbb{S}^2$ or hyperbolic space $X = \mathbb{H}^2$. Moreover, they also proved the persistence of regularity results, in that the solution always stays as regular as the initial data, provided that one is within the time interval of existence guaranteed by the local existence theorem.

For one dimensional global existence for Schr\"odinger flow from $\mathbb{S}^1$ or $\mathbb{R}^1$ into a K\"ahler manifold, we refer to \cite{PWW, RRS, ZGT} and references therein. The global well-posedness result for the Schr\"odinger flow from $\mathbb{R}^n$ (with $n\geq 2$) into $\U^2$ with small initial data was well researched by Ionescu, Kenig, Bejanaru and et al., we refer to \cite{B1, BIK, BIKT, IK} for more details. Especially, in \cite{BIKT} the global well-posedness result for the Schr\"odinger flow for small data in the critical Sobolev spaces in dimensions $n\geq2$ was addressed. Recently, in \cite{L1, L2} Z. Li proved that the Schr\"odinger flow from $\mathbb{R}^n$ with $n\geq 2$ to compact K\"ahler manifold with small initial data in critical Sobolev spaces is also global well-posed.

\medskip
On the contrary, F. Merle, P. Rapha\"el and I. Rodnianski \cite{MRR} considered the energy critical Schr\"odinger flow problem with the 2-sphere target for equivariant initial data of homotopy index $k=1$. They showed the existence of a codimension one set of smooth well localized initial data arbitrarily close to the ground state harmonic map in the energy critical norm, which generates finite time blowup solutions, and gave a sharp description of the corresponding singularity formation which occurs by concentration of a universal bubble of energy. One also found some self-similar solutions to Schr\"odinger flow from $\mathbb{C}^n$ into $\mathbb{C}P^n$ with local bounded energy which blow up at finite time, for more details we refer to \cite{DTZ, GSZ,NSVZ}.

\medskip
As for some travelling wave solutions with vortex structures, F. Lin and J. Wei \cite{LW} employed perturbation method to consider such solutions for the Schr\"odinger map flow equation with easy-axis and proved the existence of smooth travelling waves with bounded energy if the velocity of travelling wave is small enough. Moreover, they showed the travelling wave solution has exactly two vortices. Later, J. Wei and J. Yang \cite{WY} considered the same Schr\"odinger map flow equation as in \cite{LW}, i.e. the Landau-Lifshitz equation describing the planar ferromagnets. They constructed a travelling wave solution possessing vortex helix structures for this equation. Using the perturbation approach, they give a complete characterization of the asymptotic behaviour of the solution.

\medskip
It should also be pointed out that Banica and Vega in \cite{Ba, Ba1} studied the stability properties of self-similar solutions of the geometric (Da Rios) flow
\[\gamma_t = \gamma_x \times \gamma_{xx},\]
which was proposed by Da Rios. Here $\gamma = \gamma(t, x)\in\mathbb{R}^3$, $x$ denotes the arclength parameter and $t$ the time variable. It is well-known that Da Rios flow is directly related to Schr\"oding flow. Based on the Hasimoto transform, the problem is reduced to the long-time asymptotics of the cubic nonlinear Schr\"odinger equation with time-dependent coefficients in one space dimension, where the Cauchy data are supposed to be small regular perturbations of constant given at $t= 1$.

They also made a connection between a famous analytical object introduced in the 1860's by Riemann, as well as some variants of it, and the Da Rios flow (the binormal curvature flow) in \cite{Ba4}. As a consequence, this analytical object has a non-obvious nonlinear geometric interpretation. Moreover, they proved the existence of solutions of the binormal curvature flow with smooth trajectories that are as close as desired to curves with a multifractal behavior, and showed that this behavior falls within the multifractal formalism of Frisch and Parisi \cite{FP}, which is conjectured to govern turbulent fluids. For more details we refer to \cite{Ba, Ba1, Ba2, Ba3, Ba4}.

\medskip
On the other hand, the LLG equation with initial-Neumann boundary conditions has consistently attracted interest from both physicists and mathematicians:
\begin{equation*}
	\begin{cases}
		\p_tu =-\al u\times(u\times\De u)+\beta u\times\De u,\quad\quad&\text{(x,t)}\in\Om\times \Real^+,\\[1ex]
		\frac{\p u}{\p \nu}=0, &\text{(x,t)}\in\p\Om\times \Real^+,\\[1ex]
		u(x,0)=u_0: \Om\to \U^2,
	\end{cases}
\end{equation*}
along with related problems stemming from certain systems connected to the LLG equations (see \cite{CF,S-R}). Here $\nu$ denotes the outer normal vector of $\p\Om$ and $u_{0}$ is the initial data.

Next, we retrospect some of the works related to local regular solutions of the initial-Neumann boundary value problem to LLG equations (i.e. equation \eqref{eq-LL}) with $\al>0$. In the case where the base space is a bounded domain $\Om\subset \Real^{3}$, Carbou and Fabrie proved the local existence and uniqueness of regular solutions to a dissipative LL equation coupled with Maxwell equations in micromagnetism theory in \cite{CF}. Recently, the local existence of very regular solution to LLG equation with $\al>0$ was addressed by applying the delicate Galerkin approximation method and adding compatibility initial-boundary condition in \cite{CJ}. Inspired by the method used in \cite{CJ}, we obtained the local in time very regular solution to LLG equation with spin-polarized transport in \cite{CW}.

\subsection{Main results on local regular solutions to the Schr\"odinger flow}
For the most challenging case where $\al=0$, there is a few results in the literature about the well-posedness of the initial-Neumann boundary value problem of the Schr\"odinger flow
\begin{equation}\label{S-eq}
	\begin{cases}
		\p_tu =u\times\De u,\quad\quad&\text{(x,t)}\in\Om\times \Real^+,\\[1ex]
		\frac{\p u}{\p \nu}=0, &\text{(x,t)}\in\p\Om\times \Real^+,\\[1ex]
		u(x,0)=u_0: \Om\to \U^2;
	\end{cases}
\end{equation}
except for the authors of the present paper obtained the existence and uniqueness of local strong solutions to \eqref{S-eq} by assuming that $u_0\in H^3(\Om)$ with $\frac{\p u_0}{\p \nu}|_{\p \Om}=0$ in the previous work \cite{CW1}. In this paper, we continue to investigate the local existence of very regular solutions to the equation, provided some necessary compatibility conditions of the initial data. Our main conclusions are presented as follows.

\begin{thm}\label{thm1}
Let $\Om$ be a smooth bounded domain in $\Real^3$. Suppose that $u_0\in H^5(\Om,\U^2)$, which satisfies  the $1$-order compatibility condition defined in \eqref{com-cond2}, i.e. $\frac{\p u_0}{\p \nu}|_{\p \Om}=0$ and $\tilde{\n}_\nu \tau(u_0)|_{\p\Om}=0$, where $\tilde{\n}$ is the pull-back connection on ${u_0}^*T\U^2$. Then there exists a positive time $T_1$ depending only on $\norm{u_0}_{H^5(\Om)}$ such that the initial-Neumann boundary value problem \eqref{S-eq} admits a unique local in time regular solution $u$, which satisfies
\[\p^i_tu\in L^\infty([0,T_1], H^{5-2i}(\Om))\]
for $i=0,1,2$.
\end{thm}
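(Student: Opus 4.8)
The plan is to realize the solution as the vanishing--damping limit of Landau--Lifshitz--Gilbert (LLG) flows. I would first rewrite the flow intrinsically as $\p_tu=J(u)\tau(u)$, where $\tau(u)=\De u+\abs{\n u}^2u$ is the tension field and $J(u)=u\times$ is the complex structure of $\U^2$, which is parallel ($\tilde{\n}J=0$) and skew--adjoint ($\<J\xi,\xi\>=0$). For $\ep>0$ I would solve the regularized problem
\begin{equation*}
\p_tu_\ep=\ep\,\tau(u_\ep)+J(u_\ep)\tau(u_\ep),\qquad \p_\nu u_\ep=0 \ \text{ on } \p\Om,\qquad u_\ep(\cdot,0)=u_0,
\end{equation*}
which is exactly LLG with Gilbert damping $\al=\ep$ (using $-u\times(u\times\De u)=\tau(u)$ on $\U^2$); it is parabolic, preserves $\abs{u_\ep}\equiv1$, and---because $\tau(u_0)$ is tangent to $\U^2$ and $\p_\nu u_0=0$, so that $u_0\cdot\p_\nu\tau(u_0)=0$ automatically---it is compatible with the datum to first order precisely when $\tilde{\n}_\nu\tau(u_0)=0$. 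For each fixed $\ep$ its local very regular solvability follows from the Galerkin scheme already developed for the dissipative case in \cite{CJ,CW}.

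The heart of the proof is a family of $\ep$--uniform energy estimates. Using the Bochner-type commutation $\tilde{\n}_t\tau(u)=\tilde{\De}(\p_tu)+\tr R(\p_tu,\n u)\n u$ together with $\tilde{\n}J=0$, one obtains $\tilde{\n}_t\tau=\ep\,\tilde{\De}\tau+J\tilde{\De}\tau+(\text{curvature})$, and analogous identities for the time--differentiated quantities $\tilde{\n}_t^{\,k}\tau$, for which $\p_t^{k+1}u=J\tilde{\n}_t^{\,k}\tau$ up to lower-order normal terms. I would then control the hierarchy $\int_\Om\abs{\tilde{\n}_t^{\,k}\tau}^2$ (equivalently $\norm{\p_t^{k+1}u}_{L^2}^2$) for $k=0,1$, together with the one spatial-gradient level $\int_\Om\abs{\tilde{\n}\tilde{\n}_t\tau}^2$. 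Two structural facts make these close uniformly in $\ep$: first, skew--adjointness makes $\<J\tilde{\De}\xi,\xi\>$ integrate to zero, so the non-dissipative Schr\"odinger part contributes nothing at top order, while the $\ep\tilde{\De}$ part is a favorable dissipation $-\ep\int\abs{\tilde{\n}\xi}^2\le0$; second, differentiating the preserved relation $\p_\nu u_\ep=0$ in time yields $\tilde{\n}_\nu\tau=0$, $\tilde{\n}_\nu\tilde{\n}_t\tau=0,\dots$ on $\p\Om$, so every boundary integral produced by integration by parts vanishes. The remaining curvature and commutator terms are of lower order and are absorbed by Sobolev embedding ($H^2\hookrightarrow L^\infty$ for $m\le3$), yielding $\tfrac{d}{dt}\E\le P(\E)$ for a polynomial $P$ independent of $\ep$, hence a time $T_1=T_1(\norm{u_0}_{H^5})$ and bounds independent of $\ep$.

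To convert these into the claimed Sobolev regularity I would invert the principal part: $\tau(u_\ep)=-J(u_\ep)\p_tu_\ep+\ep(\ldots)$, so that $\De u_\ep=-J(u_\ep)\p_tu_\ep-\abs{\n u_\ep}^2u_\ep+\ep(\ldots)$ with $\p_\nu u_\ep=0$, and elliptic regularity with Neumann data boosts each bound on $\p_t^{\,i}u_\ep$ in $H^{3-2i}$ to a bound on $u_\ep$ two spatial orders higher. Combined with $\p_t^2u_\ep\sim-\tilde{\De}\tau\sim\De^2u_\ep$, the uniform energy bounds give $u_\ep$ bounded in $L^\infty([0,T_1],H^5)$, $\p_tu_\ep$ in $L^\infty H^3$ and $\p_t^2u_\ep$ in $L^\infty H^1$, uniformly in $\ep$. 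Passing to a weak-$*$ limit and using Aubin--Lions compactness to handle the products $J(u_\ep)\tau(u_\ep)$ and $\abs{\n u_\ep}^2u_\ep$ and to retain $\abs{u}\equiv1$, the limit $u$ solves \eqref{S-eq} with exactly the stated regularity.

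Uniqueness is inherited: a solution at the present regularity is in particular an $H^3$ strong solution, for which uniqueness was established in \cite{CW1}; alternatively one runs an energy estimate for $w=u_1-u_2$, whose equation $\p_tw=u_1\times\De w+w\times\De u_2$ again enjoys the skew structure, closing a Gronwall inequality by means of the $L^\infty H^5$ bounds. I expect the main obstacle to be the uniform control of the boundary integrals in the higher-order estimates, since only first-order spatial compatibility is assumed. The resolution---and the reason the scheme is organized around time derivatives rather than pure spatial derivatives---is that time-differentiating the Neumann condition supplies precisely the boundary vanishing $\tilde{\n}_\nu\tilde{\n}_t^{\,k}\tau=0$ needed at each level, after which spatial regularity is recovered \emph{a posteriori} from elliptic estimates; estimating $\n^5u$ directly would instead demand spatial compatibility of higher order than hypothesized.
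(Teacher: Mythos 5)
Your proposal follows essentially the same route as the paper: the vanishing-damping LLG regularization, $\ep$-uniform energy estimates on the time-derivative hierarchy $w_k=\p_t^k u_\ep$ (closed by the skew-adjointness of $J=u\times$ and by the boundary vanishing obtained from time-differentiating the Neumann condition, cf.\ Proposition \ref{comp-cond2}), recovery of spatial regularity via the Neumann elliptic estimate of Lemma \ref{eq-norm} applied to $\De u_\ep=-J(u_\ep)\p_tu_\ep-|\n u_\ep|^2u_\ep+\ep(\cdots)$, Aubin--Lions compactness in the limit, and uniqueness inherited from the $H^3$ theory of \cite{CW1}. The only difference is cosmetic: you phrase the energies intrinsically via $\tilde{\n}_t^{\,k}\tau$ and a Bochner commutation in the spirit of \cite{DW}, whereas the paper works extrinsically with $w_k$ and the cross-product structure; for the target $\U^2$ these are the same estimates.
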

In general, let $u_0\in H^{2k+2}(\Om, \U^2)$ with $k\geq 1$. We can show the existence of very regular solutions to \eqref{S-eq} under adding $k$-order compatibility condition $CC(k)$ for $u_0$ (one can also see \eqref{s-com-cond} in Definition \ref{stronger-com-cond}):
\begin{itemize}
\item[$\blacklozenge$]  \emph{For any $1\leq j\leq 2k$, there holds
\begin{equation*}
		\frac{\p}{\p \nu}\p^ju_0|_{\p\Om}=0,
\end{equation*}
where $\p^ju_0=\(\frac{\p^ju_0}{\p x^{i_1}\cdots\p x^{i_j}}\)$ are all the $j$-th partial derivatives of $u_0$.}	
\end{itemize}

\begin{thm}\label{thm2}
Suppose that $u_0\in H^{2k+1}(\Om,\U^2)$ with $k\geq 2$, which satisfies the $(k-1)$-order compatibility condition $CC(k-1)$. Let $u$ 
and $T_1>0$ be given in Theorem \ref{thm1}. Then, for any $0\leq i\leq k$ we have
\[\p^i_t u\in L^\infty([0,T_1], H^{2k+1-2i}(\Om)).\]

Additionally, if $u_0\in C^\infty(\bar{\Om})$, which satisfies the compatibility condition $CC(k)$ for any $k\geq 0$, we also have
\[u\in C^\infty(\bar{\Om}\times [0,T_1]).\]
\end{thm}

\begin{rem}\label{higher-dim-case}
\begin{itemize}
	\item[$(1)$] We should point out that the conclusions of Theorems \ref{thm1} and \ref{thm2} continue to hold in the following cases: when $\Om$ is a smooth bounded domain in $\Real^m$ for $m = 1$ or $2$, or when $\Om$ is a compact manifold with smooth boundary of dimension at most three.
	\item[$(2)$]  It seems that the results stated in Theorems \ref{thm1} and \ref{thm2} can be extended to the case where the target manifold of the Schr\"odinger flow is a compact K\"ahler manifold beyond the sphere $\U^2$. But we need to use some different arguments and techniques from here and to overcome some new essential difficulties.
	\item[$(3)$] It seems that our current arguments in the proof of Theorems \ref{thm1} and \ref{thm2} may not valid when the dimension of the domain $\Om$ is larger than 3. This raises an open question: Can the existence of regular solutions to problem \eqref{S-eq} be established in the case where the dimension of $\Om$ exceeds 3?
\end{itemize}
\end{rem}

In particular, for the one-dimensional case, we can establish the following existence result for the Schr\"odinger flow. Let $I=[0,1]$ for simplicity, the 1-dimensional Schr\"odinger flow satisfies

\begin{equation}\label{eq-ISMF0}
	\begin{cases}
		\p_tu=u\times \p^2_xu,&\text{(x,t)}\in(0,1)\times \Real^+,\\[1ex]
		\p_xu(0,t)=0,\,\p_xu(1,t)=0, &t\in\Real^+,\\[1ex]
		u(x,0)=u_0: I\to \U^2,
	\end{cases}
\end{equation}
where we set $\Om=(0,1)\subset\Real^1$ with coordinate $\{x\}$, $u$ is a time-dependent map from $(0,1)$ into a standard sphere $\U^2$.
	
Let $u_0\in H^{2k+2}(I, \U^2)$ with $k\geq 1$. We say $u_0$ satisfies the necessary $k$-order compatibility condition $\widetilde{CC}(k)$ (see Proposition \ref{intrc-cd2}), if
\begin{itemize}
\item[$\blacklozenge$] \emph{For any $0\leq j\leq k$, there holds	
\begin{equation*}
\tilde{\n}^{2j+1}_xu_0|_{\p I}=0.
\end{equation*}}
\end{itemize}
It is worth noting that this compatibility condition $\widetilde{CC}(k)$ is weaker than the condition $CC(k)$.
\begin{thm}\label{thm2'}
Suppose that $u_0\in H^{2k+1} (I,\U^2)$ with $k\geq 2$, which satisfies the $(k-1)$-order compatibility condition $\widetilde{CC}(k-1)$. Then there exists a positive time $T_1$ depending only on $\norm{u_0}_{H^5(I)}$ such that the initial-Neumann boundary value problem \eqref{eq-ISMF0} admits a unique local regular solution $u$ on $[0,T_1]$ such that for any $0\leq i\leq k$ we have
\[\p^i_t u\in L^\infty([0,T_1], H^{2k+1-2i}(I)).\]

Additionally, if $u_0\in C^\infty([0,1])$, which satisfies the compatibility condition $\widetilde{CC}(k)$ for any $k\geq 0$, we also have
\[u\in C^\infty(I\times [0,T_1]).\]
\end{thm}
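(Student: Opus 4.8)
The plan is to take the one-dimensional case of Theorem \ref{thm1} as the starting point and then upgrade the regularity by a bootstrap on the order of the time derivative, arguing by induction on $k$. The base case $k=2$ is exactly Theorem \ref{thm1}: for $k=2$ the data lie in $H^{2k+1}=H^5$ and $\widetilde{CC}(k-1)=\widetilde{CC}(1)$ contains precisely the cases $j=0$ and $j=1$, namely $\p_xu_0|_{\p I}=0$ and $\tilde{\n}^3_xu_0|_{\p I}=\tilde{\n}_x\tau(u_0)|_{\p I}=0$, which is the $1$-order compatibility condition required there. This produces a unique regular solution on $[0,T_1]$ with $T_1=T_1(\norm{u_0}_{H^5})$ and $\p^i_tu\in L^\infty([0,T_1],H^{5-2i}(I))$ for $i=0,1,2$, and uniqueness is inherited. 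Assuming the statement for $k-1$, the hypotheses for $k$ (data in $H^{2k+1}\subset H^{2k-1}$ satisfying $\widetilde{CC}(k-1)\supset\widetilde{CC}(k-2)$) already furnish, by the inductive hypothesis, $\p^i_tu\in L^\infty([0,T_1],H^{2k-1-2i}(I))$ for $0\le i\le k-1$.

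The analytic heart is the identity obtained by applying $u\times$ to the equation,
\begin{equation*}
\p_x^2u=-u\times\p_tu-|\p_xu|^2u,
\end{equation*}
which trades two spatial derivatives for one temporal derivative. By applying it iteratively and descending from $i=k$ to $i=0$, the full ladder $\p^i_tu\in L^\infty H^{2k+1-2i}$ follows once one produces the single new top estimate $\p^k_tu\in L^\infty([0,T_1],H^1(I))$. I would therefore set $v=\p^k_tu$, differentiate the flow $k$ times in $t$ to obtain $\p_tv=u\times\p_x^2v+F$, where $F$ gathers commutator terms carrying at most $k-1$ time derivatives on any single factor (at the cost of extra spatial derivatives controlled by the levels below), and estimate $\tfrac12\int_I(|v|^2+|\p_xv|^2)\,dx$. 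Integration by parts collapses the skew principal term $\int_I\p_xv\cdot(u\times\p_x^3v)$ into a pure boundary contribution $[\p_xv\cdot(u\times\p_x^2v)]_{\p I}$, while all remaining volume terms are controlled, via the trading relation and the inductive bounds below level $k$, by lower-order norms; Gronwall then closes the estimate.

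The decisive point---and the place where the weaker one-dimensional condition $\widetilde{CC}$ rather than $CC$ enters---is the vanishing of the boundary contributions. The Neumann condition $\p_xu|_{\p I}=0$ holds for all $t$, so differentiating it in $t$ gives $\p_x\p^j_tu|_{\p I}=0$; the content of $\widetilde{CC}(k-1)$ is precisely that, at $t=0$, the intrinsic relation $\tilde{\n}^{2j+1}_xu_0|_{\p I}=0$ is equivalent to $\p_x\p^j_tu|_{t=0,\,\p I}=0$ for $0\le j\le k-1$ (the content of Proposition \ref{intrc-cd2}), so that the boundary data of all time derivatives through order $k$ are consistent and $\p_xv|_{\p I}=0$, killing the boundary term. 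Only the tangential (covariant) part of the odd spatial derivatives must vanish, which is why $CC$ may be relaxed to $\widetilde{CC}$ here. To make the argument rigorous I would run these estimates on an approximating sequence---either a Neumann-eigenfunction Galerkin scheme or the Ginzburg--Landau / LLG regularization with $\al\downarrow0$ used in the earlier theorems---chosen so that the approximants satisfy the same boundary conditions and have consistent initial data; the compatibility conditions guarantee the a priori bounds are uniform, and passing to the limit yields $\p^i_tu\in L^\infty([0,T_1],H^{2k+1-2i}(I))$.

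Finally, for the smooth conclusion I would apply the finite-regularity statement for every $k$: if $u_0\in C^\infty([0,1])$ satisfies $\widetilde{CC}(k)$ for all $k\ge0$, then $\p^i_tu\in L^\infty([0,T_1],H^s(I))$ for every $s$ and $i$, whence, using $H^s(I)\hookrightarrow C^{s-1}(\bar I)$ together with the trading relation to convert time derivatives into spatial ones, every mixed derivative $\p^a_t\p^b_xu$ is bounded on $I\times[0,T_1]$. Upgrading the $L^\infty_t$ bounds to continuity in $t$ (by interpolation, or directly from the equation, which expresses $\p_tu$ through spatial derivatives that are continuous) yields joint continuity of all mixed derivatives up to the boundary, hence $u\in C^\infty(I\times[0,T_1])$. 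The main obstacle throughout is the boundary analysis: bookkeeping exactly which covariant derivatives appear in the higher-order energy identities, verifying that $\widetilde{CC}(k-1)$ annihilates all of them, and arranging the approximation scheme so that these boundary conditions---and therefore the uniformity of the estimates---persist in time and up to the corner $\p I\times\{0\}$.
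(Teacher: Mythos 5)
Your proposal is correct and follows essentially the same route as the paper: induction on the order of the time derivative, an $H^1$ energy estimate for $\p^k_tu$ performed on a regularized approximation (the paper uses the LLG/parabolic perturbation $\ep\tau(u)+u\times\De u$, one of the two schemes you name) with boundary terms annihilated by the time-differentiated Neumann condition (Proposition \ref{comp-cond2}), the trading relation $\p_x^2u=-u\times\p_tu-|\p_xu|^2u$ together with the equivalent-norm Lemmas \ref{eq-norm} and \ref{w_{k-1}-w_k} to convert the top-level temporal bound into the full spatial ladder, and Proposition \ref{intrc-cd2} to justify that the weaker $\widetilde{CC}$ conditions suffice in one dimension. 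The smooth case by intersecting over all $k$ and Sobolev embedding is also how the paper concludes.
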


\medskip
We adopt a similar parabolic perturbation approximation of \eqref{S-eq} and use geometric energy method with that in \cite{DW} to get very regular solutions to \eqref{S-eq}. Indeed, we will use the local very regular solutions to the parabolic perturbed equation of \eqref{S-eq}
\begin{equation}\label{pra-eq0}
	\begin{cases}
		\p_tu =\ep \tau(u)+u\times \De u,\quad\quad&\text{(x,t)}\in\Om\times \Real^+,\\[1ex]
		\frac{\p u}{\p \nu}=0, &\text{(x,t)}\in\p\Om\times \Real^+,\\[1ex]
		u(x,0)=u_0: \Om\to \U^2,
	\end{cases}
\end{equation}
with $0<\ep<1$ to approximate a regular solution to the problem \eqref{S-eq}, where
\[\tau(u_\ep)=\De u_\ep+|\n u_\ep|^2u_\ep=-u_\ep\times(u_\ep\times \De u_\ep),\]
since $|u_\ep|=1$. The key point is to establish some suitable uniform high order energy estimates of $u_\ep$ with respect to $\ep$.

For each $\ep>0$, recall that the local existence of very regular solution to \eqref{pra-eq0} has been established in \cite{CJ}(also see \cite{CW}), which can be formulated as the following theorem.
\begin{thm}
Suppose that $u_0\in H^{2k+1}(\Om, \U^2)$ with $k\geq 1$, and satisfies the $(k-1)$-order compatibility condition \eqref{com-cond}. Then there exists a positive time $T_\ep$ (depending only on $\ep$ and $\norm{u_0}_{H^2(\Om)}$) such that the problem \eqref{pra-eq0} admits a unique local solution $u_\ep$, which satisfies
\[\p^i_tu_\ep\in L^\infty([0,T], H^{2k+1-2i}(\Om))\cap L^2([0,T], H^{2k+2-2i}(\Om))\]
for any $0<T<T_\ep$  and $0 \leq i\leq k$.
\end{thm}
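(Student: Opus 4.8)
The plan is to construct $u_\ep$ by a Galerkin approximation adapted to the Neumann boundary condition, and then to establish a hierarchy of higher-order energy estimates that are uniform in the approximation and survive the passage to the limit. First I would take the orthonormal basis $\{\phi_j\}$ of $L^2(\Om)$ consisting of eigenfunctions of $-\De$ subject to $\p_\nu\phi_j|_{\p\Om}=0$, with eigenvalues $\lambda_j$, and look for $u_n(t)=\sum_{j\le n}c_j(t)\phi_j$ solving the projected equation $\p_tu_n=P_n[\ep\tau(u_n)+u_n\times\De u_n]$, where $P_n$ is the orthogonal projection onto $\mathrm{span}\{\phi_1,\dots,\phi_n\}$. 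Since \eqref{pra-eq0} is semilinear parabolic once the leading term $\ep\De u$ is isolated (the nonlinearities $|\n u_n|^2u_n$ and $u_n\times\De u_n$ depend smoothly on the finite-dimensional coefficients), Picard--Lindel\"of yields a unique $u_n$ on a maximal interval; the whole point is then to show this interval does not shrink as $n\to\infty$.

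The heart of the matter is the a priori estimate. Its mechanism is already visible at lowest order: testing \eqref{pra-eq0} against $\tau(u)$ and using $|u|=1$ together with the pointwise orthogonality $\langle u\times\De u,\tau(u)\rangle=\langle J(u)\tau(u),\tau(u)\rangle=0$ gives
\begin{equation*}
\tfrac{1}{2}\frac{d}{dt}\int_\Om|\n u|^2\,dx+\ep\int_\Om|\tau(u)|^2\,dx=0,
\end{equation*}
where the boundary term produced by integration by parts vanishes because $\p_\nu u|_{\p\Om}=0$. I would iterate this template: differentiate \eqref{pra-eq0} in time, use the equation itself to trade each time derivative for two spatial derivatives, and estimate $\sum_{i=0}^{k}\norm{\p_t^i u}_{H^{2k+1-2i}}^2$. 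At each level the dissipative contribution $\ep\int|\n\p_t^i\tau(u)|^2$ controls the top-order spatial derivatives and produces precisely the claimed $L^2([0,T],H^{2k+2-2i})$ gain, while the leading part of the skew term $u\times\De u$ cancels by the same orthogonality and leaves only lower-order commutators. The remaining nonlinear products are absorbed by Gagliardo--Nirenberg and Sobolev embeddings, which is where $\dim\Om\le 3$ enters. The closing estimate sits at the $H^2$ level (explaining why $T_\ep$ depends only on $\ep$ and $\norm{u_0}_{H^2}$); the higher estimates are essentially linear in their top-order quantity once lower norms are controlled, so they propagate on the same interval without shrinking it, and Gronwall then furnishes a uniform existence time $T_\ep$.

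The main obstacle, and the reason the compatibility condition \eqref{com-cond} is imposed, is the control of the boundary terms generated by the repeated integrations by parts in the higher-order estimates. Differentiating the relation $\p_\nu u=0$ in $t$ and substituting \eqref{pra-eq0} forces $\p_\nu\big(\ep\tau(u)+u\times\De u\big)=0$ on $\p\Om$; iterating this turns into exactly the requirement that $\frac{\p}{\p\nu}\p^j u|_{\p\Om}$ vanish for the relevant range of $j$, and the $(k-1)$-order condition $CC(k-1)$ is what guarantees that the boundary integrals arising up to the order-$k$ estimate drop out, while simultaneously ensuring the formally computed traces $\p_t^i u(\cdot,0)$ are consistent at $\p\Om$. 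Matching the order of each differentiated term against the available compatibility relation so that every boundary contribution cancels is the delicate bookkeeping on which the whole argument rests.

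Finally, with the uniform bounds in hand I would extract weak-$*$ limits in $L^\infty([0,T],H^{2k+1-2i})$ and, using the bound on $\p_t u_n$ together with the Aubin--Lions lemma, strong limits sufficient to pass to the limit in the nonlinear terms, yielding a solution $u_\ep$ of the stated regularity. The constraint $u_\ep:\Om\to\U^2$ is then recovered from the geometric structure: setting $f=|u_\ep|^2-1$ and using $\langle u\times\De u,u\rangle=0$ to kill the skew term, one finds that $f$ solves the linear parabolic equation $\p_tf=\ep\De f+2\ep|\n u_\ep|^2 f$ with homogeneous Neumann data and $f(\cdot,0)=0$, whence $f\equiv 0$ by uniqueness for this linear problem. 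Uniqueness of $u_\ep$ follows from an energy estimate for the difference of two solutions, which satisfies a linear parabolic inequality whose Gronwall bound forces it to vanish, the Neumann condition again removing the boundary contributions.
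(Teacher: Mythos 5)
The paper does not actually prove this theorem: it is quoted from \cite{CJ} and \cite{CW} (see also Theorems \ref{para-H^5} and \ref{para-V-sol}), and the method used there is exactly the one you outline --- a Galerkin scheme built on the Neumann eigenfunctions of $-\De$, higher-order energy estimates obtained by differentiating the equation in time and trading $\p_t$ for $\De$ through the equation itself, and the compatibility conditions \eqref{com-cond} entering precisely to kill the boundary terms $\frac{\p}{\p\nu}\p_t^ju|_{\p\Om}$ produced by the repeated integrations by parts. So the overall architecture of your argument is the right one.

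There is, however, one genuine gap as written. Every cancellation you invoke in the energy identities --- $\langle\p_tu,u\rangle=0$, and the orthogonality $\langle u\times\De u,\tau(u)\rangle=0$ in the form you use it --- depends on the pointwise constraint $|u|=1$. The projection $P_n$ does not preserve this constraint, so the Galerkin approximants $u_n$ do not satisfy $|u_n|=1$, and the identity $\tfrac12\frac{d}{dt}\int_\Om|\n u_n|^2dx+\ep\int_\Om|\tau(u_n)|^2dx=0$ is false for them; moreover $\tau(u_n)$ is not even an admissible test function for the projected equation, since it does not lie in $\mathrm{span}\{\phi_1,\dots,\phi_n\}$ (whereas $\De u_n$ does). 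Since you only recover $|u_\ep|=1$ after passing to the limit, the argument is circular as presented: the uniform-in-$n$ bounds needed to pass to the limit are never actually established. The standard repair, and the one used in the cited works, is to run the estimates on $u_n$ using only the unconditional antisymmetry $\langle a\times b,b\rangle=0$ (so that, e.g., $\int_\Om\langle u_n\times\De u_n,\De u_n\rangle dx=0$ with no constraint) and to treat $|\n u_n|^2u_n$ as a generic cubic nonlinearity absorbed by Gagliardo--Nirenberg and the $\ep$-dissipation; one then obtains a Gronwall inequality rather than a conservation law, which still closes at the $H^2$ level and yields the $\ep$-dependent time $T_\ep$. A related point you should also address is that $P_nu_0$ need not satisfy \eqref{com-cond} exactly, so the initial values $\p_t^ju_n(0)$ of the time-differentiated quantities must be shown to be bounded uniformly in $n$ before the higher-order estimates can be launched.
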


Next, we outline the strategy and main ideas for addressing the above perturbed problem. Since the initial data must meet the necessary compatibility condition defined by \eqref{com-cond} in the above theorem, the first difficulty we encounter is: how to find an initial data $u_0$ such that it satisfies the compatibility condition on boundary which is independent of $\ep\in (0,1)$? The answer lies in the following approach.

By applying the intrinsic geometric structures of the equation $\p_tu_\ep=\ep \tau(u_\ep)+J(u_\ep)\tau(u_\ep)$:
\begin{itemize}
	\item $\ep\tau(u_\ep)$ and $J(u_\ep)\tau(u_\ep)$ is orthogonal,
	\item The complex structure $J$ is integrable, i.e. $\n J=0$,
\end{itemize}
we are able to provide an intrinsic description of compatibility condition of the initial data elucidated in Proposition \ref{intrc-cd1} (or Proposition \ref{intrc-cd2}), which implies the 1-order compatibility condition as well as the any $k$-order compatibility conditions for 1-dimensional case for equation \eqref{S-eq} are the same as that for its parabolic perturbed equation \eqref{pra-eq0}. Moreover, under the stronger but natural restrictions of $u_0$ (i.e \eqref{s-com-cond}) given in Definition \ref{stronger-com-cond}, we can also show these two equations in general dimensional case are of the same $k$-order compatibility conditions with $k>1$. Further details are provided in Section~\ref{s: com-cond}.

\medskip
Secondly, we need to derive uniform high-order energy estimates of approximate solution $u_\ep$, which are independent of $\ep\in(0,1)$. However, since the space of the test functions associated to the initial-Neumann boundary problem \eqref{pra-eq0} (i.e. those functions vanishing the boundary terms when integration by parts are applied) is much smaller than that in \cite{DW}, there are two essential difficulties to overcome in this step:
\begin{itemize}
\item[$(1)$] One is how to find test functions associated to the initial-Neumann boundary problem \eqref{pra-eq0}?
\item[$(2)$] The other is how to get uniform high order energy estimates avoiding the loss of derivatives by using these test functions in $(1)$?
\end{itemize}

We will make full use of the geometric structures of the Schr\"odinger flow \eqref{S-eq} to overcome these two issues. Because \eqref{S-eq} has the following extrinsic geometric structures: 1. $J=u\times$ is antisymmetric; 2. $(\times, \Real^3)$ is a Lie algebra; 3. The fact $|u|=1$ implies $u\in T^\perp_u\U^2$, we can choose suitable extrinsic geometric energy which can control the energy for us to estimate.

On the other hand, we need to use the following fact: for any $k\in \mathbb{N}$ and any $u\in H^{k+2}(\Om)$ with $\frac{\p u}{\p \nu}|_{\p\Om}=0$, there holds the following inequality
\[\norm{u}_{H^{k+2}}(\Om)\leq C(\norm{\De u}_{H^k(\Om)}+\norm{u}_{L^2(\Om)}),\]
which means that $\norm{\De u}_{H^k(\Om)}+\norm{u}_{L^2(\Om)}$ is an equivalent Sobolev norm to $\norm{u}_{H^{k+2}}$.
For a regular solution $u_\ep$ to \eqref{pra-eq0}, setting $w_k=\p^k_t u_\ep$, the facts
\[\frac{\p w_k}{\p \nu}|_{\p\Om\times[0,T_\ep)}=0,\]
for each $k\in \mathbb{N}$ tell us that
\begin{itemize}
\item[$(i)$] There hold true the equivalent Sobolev estimates
\begin{align*}
\norm{w_k}_{H^2}\leq& C(\norm{\De w_k}_{L^2}+\norm{w_k}_{L^2}),\\
\norm{w_k}_{H^3}\leq& C(\norm{\De w_k}_{H^1}+\norm{w_k}_{L^2}).
\end{align*}
By writing the equation \eqref{pra-eq0} as the following equivalent form
\[\De u_\ep=\frac{1}{1+\ep^2}(\ep \p_t u_\ep-u_\ep\times \p_t u_\ep)-|\n u_\ep|^2 u_\ep,\]
from the above inequalities one can infer the key estimates of equivalent Sobolev norms in Lemma \ref{v-w} and Lemma \ref{w_{k-1}-w_k}.

\medskip
\item[$(ii)$] $w_k$ and $\De w_k$ can be chosen as the suitable test functions matching the Neumann boundary conditions.
\end{itemize}

\medskip
The above two observations imply that we should consider the equation of $w_k=\p^k_tu_\ep$ with the compatibility condition of initial data \eqref{com-cond}, the uniform higher order geometric energy estimates of $u_\ep$ can be obtained after we showing the key estimates for equivalent Sobolev norms of $w_k$ stated in $(i)$. More precisely, we will use the following simple process $\P$ to explain the strategy of improving the order of energy estimates.
\begin{itemize}
\item[$(1)$] Assume that $u_0\in H^3(\Om,\U^2)$ and satisfies the $0$-order compatibility condition, i.e. $\frac{\p u_0}{\p \nu}|_{\p \Om}=0$. By considering the equation satisfied by $w_1=\p_t u_\ep$ and applying the key $H^3$-equivalent norms of $u_\ep$ established in \cite{CW1}:
\[\norm{u_\ep}^2_{H^3}\leq C(\norm{\De u_\ep}^2_{L^2}+\norm{w_1}^2_{H^1}+1)^3,\]
we can give a uniform $H^3$-bound of $u_\ep$ on some uniform time interval $[0,T_0]$.

\medskip
\item[$(2)$] Assume $u_0\in H^5(\Om,\U^2)$ and satisfies the $1$-order compatibility condition, i.e. $\frac{\p u_0}{\p \nu}|_{\p \Om}=0$ and $\tilde{\n}_\nu \tau(u_0)|_{\p\Om}=0$. By using the equation satisfied by $w_2=\p^2_t u_\ep$ and applying the estimates obtained in $(1)$ and the key equivalent $H^3$-norm of $w_1$ in Lemma \ref{v-w}:
\begin{align*}
	\norm{w_1}^2_{H^2(\Om)}\leq &C(\norm{u_\ep}^4_{H^3}+1)\norm{w_1}^2_{H^1}+C\norm{w_2}^2_{L^2},\\
	\norm{w_1}^2_{H^3(\Om)}\leq &C(\norm{u_\ep}^2_{H^3},\norm{w_1}^2_{H^1})(\norm{w_2}^2_{H^1}+1),
\end{align*}
we can show a uniform $H^1$-estimate of $w_2$ on $[0,T_1]$ for some $0<T_1\leq T_0$. This implies a uniform $H^5$-bound on $u_\ep$ by using equation \eqref{pra-eq0}.

\medskip
\item[$(3)$] Letting $\ep \to 0$, we get a $H^5$-regular solution to \eqref{S-eq}. On the other hand, the uniqueness of such solution has been established in \cite{CW1}.
\end{itemize}
This completes the outline of the proof of Theorem \ref{thm1}.

\medskip
To get the higher regularity of the solution $u$ obtained in Theorem \ref{thm1}, we need to impose a stronger higher order compatibility condition as in Definition \ref{stronger-com-cond} (or a necessary higher order compatibility condition in \eqref{intrc-cd2} for 1-dimensional case). Then we can prove Theorem \ref{thm2} (or Theorem \ref{thm2'}) by using the method of induction, repeating the above process $\P$ for the higher order derivatives of $u_\ep$ in direction of time $t$ and applying the corresponding key equivalent norms established in Lemma \ref{w_{k-1}-w_k}.

\subsection{Global smooth solutions to 1-dimensional Schr\"odinger flow}
Once we get the local existence of smooth solution to the initial-Neumann boundary problem of the Schr\"odinger flow on bounded domains, another natural question is that whether the local solutions are globally well-posed? For 1-dimensional Schr\"odinger flow \eqref{eq-ISMF0} we can get a positive answer to this question. The precise result is stated in the following theorem.

\begin{thm}\label{thm3}
Suppose that $u_0\in C^\infty (I,\U^2)$, which satisfies the $k$-order compatibility condition $\widetilde{CC}(k)$ for any $k\geq 0$. Then the initial-Neumann boundary value problem \eqref{eq-ISMF0} admits a unique global smooth solution $u$ on $[0,\infty)$.
\end{thm}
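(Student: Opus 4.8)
The plan is to run a continuation argument on top of the local theory. By Theorem \ref{thm2'} the existence time $T_1$ of the local smooth solution depends only on $\norm{u_0}_{H^5(I)}$; hence, once we know that for every finite $T$ the quantity $\sup_{t\in[0,T]}\norm{u(t)}_{H^5(I)}$ is finite, the solution can be restarted with a uniformly positive time-step and therefore extended to all of $[0,\infty)$. Since $\norm{u(t)}_{H^5}$ is controlled, modulo lower-order terms and the pointwise bound $|u|=1$, by the geometric energies $\E_k(t)=\frac12\int_I|\tilde{\n}_x^{\,k}u_x|^2\,dx$ for $0\le k\le 4$, the whole problem reduces to proving that each $\E_k$ stays finite on every finite time interval. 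Throughout, the Neumann condition $\p_x u|_{\p I}=0$ and the compatibility conditions $\widetilde{CC}(k)$, which force the odd covariant derivatives $\tilde{\n}_x^{\,2j+1}u|_{\p I}=0$, are used to discard the boundary contributions produced by integration by parts.

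The base of the hierarchy is furnished by two conservation laws. First, differentiating $\E_0$ and using the antisymmetry of $J=u\times$ together with $\p_x u|_{\p I}=0$ shows $\frac{d}{dt}\E_0=0$, so the Dirichlet energy is conserved and $\norm{u_x(t)}_{L^2}$ is bounded uniformly in time. Second, writing the flow intrinsically as $\p_t u=J\tau$ with $\tau=\tilde{\n}_x u_x$ and computing directly, integrating by parts once (the boundary terms vanishing by the Neumann and compatibility conditions), one finds that the Hamiltonian $\mathcal{H}(t)=2\E_1(t)-\frac14\int_I|u_x|^4\,dx$ satisfies $\frac{d}{dt}\mathcal{H}=0$: the time derivatives of its two pieces cancel exactly, reflecting the integrable NLS structure of the one-dimensional flow. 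Combining conservation of $\mathcal{H}$ and of $\E_0$ with the one-dimensional Gagliardo--Nirenberg inequality $\int_I|u_x|^4\le C\norm{u_x}_{L^2}^3\norm{u_{xx}}_{L^2}+C\norm{u_x}_{L^2}^4$ and Young's inequality lets one absorb the quartic term and bound $\E_1(t)$, hence $\norm{u(t)}_{H^2(I)}$ and in particular $\norm{u_x(t)}_{L^\infty}$, uniformly in time. It is worth stressing that this step genuinely needs the second conservation law: a naive energy estimate for $\E_1$ only yields $\frac{d}{dt}\E_1\lesssim\norm{u_x}_{L^\infty}^2\E_1\lesssim\E_1^{3/2}$, which by itself permits finite-time blow-up.

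With $\norm{u_x}_{L^\infty}$ now under uniform control, the higher energies are treated inductively. For $k\ge 2$ one computes $\frac{d}{dt}\E_k$ and uses $\tilde{\n}J=0$ and the antisymmetry of $J$ to see that the top-order term reduces to a pure boundary term, which vanishes by $\widetilde{CC}$; the curvature of $\U^2$ and the Leibniz rule then generate only terms in which the highest covariant derivative $\tilde{\n}_x^{\,k}u_x$ occurs quadratically, multiplied by factors involving derivatives of order $\le k$ that are already controlled by $\E_0,\dots,\E_{k-1}$ through Gagliardo--Nirenberg. This yields a differential inequality of the form $\frac{d}{dt}\E_k\le C\big(\E_0,\dots,\E_{k-1}\big)\,\E_k$, whose coefficient is, by the inductive hypothesis, bounded on any finite interval; Gronwall's inequality then keeps $\E_k$ finite there. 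Carrying the induction up to $k=4$ (and, for the $C^\infty$ conclusion, for all $k$) gives the required finite-interval bounds on $\norm{u(t)}_{H^5}$ and on every higher Sobolev norm.

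The remaining point, which I expect to be the main obstacle, is to justify that the compatibility conditions are preserved by the flow, so that the boundary terms above vanish at every time and not merely at $t=0$. The cleanest route is to observe that the flow $\p_t u=u\times\p_x^2u$ commutes with even reflection about the endpoints $x=0,1$: extending $u$ evenly to the circle $\Real/2\Integer$ produces a smooth periodic map precisely because $\widetilde{CC}(k)$ holds for all $k$, and by uniqueness this even extension solves the periodic flow and remains even, whence $\tilde{\n}_x^{\,2j+1}u|_{\p I}=0$ for all $t$ (global well-posedness of the periodic one-dimensional flow being available either by the same energy estimates, now with no boundary, or by \cite{RRS,ZGT}); alternatively, one propagates the conditions directly by differentiating the equation in $t$ along $\p I$. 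Granting this, the a priori bounds close, the continuation argument produces a global solution, the space--time smoothness $u\in C^\infty(I\times[0,\infty))$ follows by converting the spatial bounds into bounds on $\p_t^i u$ through the equation, and uniqueness is inherited from Theorem \ref{thm2'} and \cite{CW1}.
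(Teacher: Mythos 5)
Your proposal is correct in outline, and its first half coincides with the paper's argument: the paper also runs a continuation scheme off the local theory, conserves the Dirichlet energy and the Hamiltonian $\int_I|u_t|^2dx-\tfrac14\int_I|u_x|^4dx$ (note $\int_I|u_t|^2dx=\int_I|\tau(u)|^2dx=2\E_1$, so this is exactly your $\mathcal{H}$), and closes the $H^2$ bound with the same Gagliardo--Nirenberg/absorption step. Where you genuinely diverge is in the higher-order estimates. You propagate the intrinsic spatial energies $\E_k=\tfrac12\int_I|\tilde{\n}_x^{\,k}u_x|^2dx$, whose integration by parts produces boundary terms $\langle\tilde{\n}_x^{\,k}u_x,J\tilde{\n}_x^{\,k+1}u_x\rangle|_{\p I}$ that vanish only if the odd covariant derivatives $\tilde{\n}_x^{\,2j+1}u(\cdot,t)|_{\p I}$ vanish \emph{at every time}, which is why you must separately propagate $\widetilde{CC}(k)$ along the flow. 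The paper sidesteps this entirely by differentiating in time instead: it estimates $\|\p_t^k u\|_{H^1}$ using $w_k=\p_t^ku$ and $\p_x^2w_k$ as test functions, for which the only boundary information needed is $\p_x\p_t^ku|_{\p I}=0$, and that holds automatically for all $t$ by differentiating the Neumann condition in time (Proposition \ref{comp-cond2}); spatial regularity is then recovered from the elliptic relation $u_{xx}=-u\times u_t-|u_x|^2u$. The propagation of the spatial compatibility conditions is thus needed in the paper only at the restart time, where it follows from smoothness in $t$ together with the equivalence in Proposition \ref{intrc-cd2} (with $\ep=0$). Your two resolutions of the obstacle are both viable: the direct one is essentially the paper's Proposition \ref{intrc-cd2} applied at each time slice, while the even-reflection argument is sound (under $\widetilde{CC}(k)$ for all $k$ the odd ordinary and odd covariant derivatives at $\p I$ vanish simultaneously, by the parity argument in Lemma \ref{Key-estimate}, so the extension to $\Real/2\Integer$ is smooth, and the reflection symmetry is preserved by uniqueness) --- indeed, if you invoke global well-posedness of the periodic problem, the reflection trick alone already yields the theorem and is arguably a shortcut the paper does not take. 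What your route buys is a cleaner, coordinate-free induction with boundary terms killed by symmetry; what the paper's route buys is that the required boundary vanishing is free at all orders, at the cost of repeatedly translating between time and space regularity through the equation.
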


The proof of this theorem is different from that of Theorems \ref{thm1} and \ref{thm2}. The key point is that we find the local smooth solution $u$ to \eqref{eq-ISMF0} satisfies a conversation law:
\begin{equation}\label{Key-formu}
\frac{\p}{\p t}\(\int_{I}|\p_tu|^2dx-\frac{1}{4}\int_{I}|\p_xu|^4dx\)=0,
\end{equation}
which was proved in \cite{SSB} for 1-dimensional Schr\"odinger flow from $\mathbb{R}^1$ into $\U^2$ (also see \cite{DW1}), and then was generalized to Hermitian locally symmetric spaces in \cite{DWW, PWW}. Then by applying the conversation law of energy:
\[\int_{I}|\p_xu|^2dx(t)=\int_{I}|\p_xu_{0}|^2dx\]
and the Sobolev interpolation inequality on $I$:
\[\int_{I}|\p_xu|^4\leq C\norm{\p_xu}_{H^1}\norm{\p_xu}^3_{L^2},\]
this implies
\[\norm{u}^2_{H^2}(t)\leq C(\norm{u_{0}}^2_{H^1}+1)^3+\norm{\tau(u_0)}^2_{L^2},\]
for any existence time $t$.

With this uniform $H^2$-estimate of $u$ at hand, by considering the equation of $\p^k_tu$ with $k\geq 1$ and estimating the corresponding high order equivalent energy $\norm{\p^k_t u}^2_{H^1}$, we can apply the method of induction analogous to that in the proof of Theorems \ref{thm1} and \ref{thm2} to get uniform bounds:
\[\sup_{0<t<T}\norm{\p^j_t\p^s_xu}^2_{L^2}\leq C(T),\]
for any $j,s\geq 0$, where $C(T)$ satisfies $C(T)<\infty$ if $T<\infty$. Therefore, Theorem \ref{thm3} follows from the above uniform estimates of $u$.

\medskip
The rest of our paper is organized as follows. In Section \ref{s: pre}, we introduce some basic notations on Sobolev space and some preliminary lemmas. The compatibility conditions will be given and described intrinsically in Section \ref{s: com-cond}. In Section \ref{s: H^5}, we prove Theorem \ref{thm1}. Next, Theorems \ref{thm2} and \ref{thm2'} are given in Section \ref{s: hig-reg}. Finally, we prove Theorem \ref{thm3} in Section \ref{s: global-exist}.

\section{Preliminary}\label{s: pre}
In this section, we begin with introducing some notions and notations on Sobolev spaces that will be used in the subsequent context of this paper. Let $\Om$ be a smooth bounded domain in $\Real^m$, N be an isometrically embedded submanifold of $\Real^K$. In many cases throughout the paper, we will take the standard sphere $\U^2$ in $\Real^3$ as our choice for $N$. Let $u:\Om\to N\hookrightarrow\Real^K$ be a map. We set
$$H^{k}(\Om,N)=\{u\in H^k(\Om)=W^{k,2}(\Om,\Real^K):u(x)\in N\,\,\text{for a.e. x}\in \Om\}.$$
For simplicity, we also denote $H^k(\Om)=W^{k,2}(\Om,\Real^K)$.

Moreover, let $(B,\norm{.}_B)$ be a Banach space and $f:[0,T]\to B$ be a map. For any $p>0$ and $T>0$, recall that
\[\norm{f}_{L^p([0,T], B)}:=\(\int_{0}^{T}\norm{f}^p_{B}dt\)^{\frac{1}{p}},\]
and
\[L^p([0,T],B):=\{f:[0,T]\to B:\norm{f}_{L^p([0,T],B)}<\infty\}.\]
In particular, we denote
\[L^{p}([0,T],H^{k}(\Om,N))=\{u\in L^{p}([0,T],H^{k}(\Om)):u(x,t)\in N\,\,\text{for a.e. (x,t)}\in \Om\times[0,T]\},\]
where $k,\,l\in \mathbb{N}$  and $p\geq 1$.

\subsection{Some preliminary lemmas}\
For later convenience, we need to recall some important preliminary lemmas. The $L^2$ theory of Laplace operator with Neumann boundary condition implies the following Lemma on equivalent Sobolev norms, for the details we refer to \cite{Weh}.

\begin{lem}\label{eq-norm}
Let $\Om$ be a bounded smooth domain in $\Real^{m}$ and $k\in\mathbb{N}$. There exists a constant $C_{k,m}$ such that, for all $u\in H^{k+2}(\Om)$ with $\frac{\p u}{\p \nu}|_{\p\Om}=0$,
\begin{equation}\label{eq-n}
\norm{u}_{H^{2+k}(\Om)}\leq C_{k,m}(\norm{u}_{L^{2}(\Om)}+\norm{\De u}_{H^{k}(\Om)}).
\end{equation}
Here, for simplicity we denote $H^0(\Om):=L^2(\Om)$.
\end{lem}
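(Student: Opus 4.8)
The plan is to prove the nontrivial inequality \eqref{eq-n}; the reverse bound $\norm{u}_{L^2(\Om)} + \norm{\De u}_{H^k(\Om)} \leq C\norm{u}_{H^{k+2}(\Om)}$ is immediate since $\De$ maps $H^{k+2}$ boundedly into $H^k$. Thus \eqref{eq-n} is precisely an a priori regularity estimate for the Neumann Laplacian, and I would establish it by induction on $k$.

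For the base case $k=0$, I would pass through the weak formulation of the shifted Neumann problem: given $u \in H^2(\Om)$ with $\p_\nu u|_{\p\Om}=0$, set $f = -\De u + u \in L^2(\Om)$, so that $u$ is the weak solution of $-\De u + u = f$ subject to homogeneous Neumann data. The bilinear form $a(v,w)=\int_\Om(\n v\cdot\n w + vw)$ is coercive on $H^1(\Om)$, and the standard $L^2$-regularity theory for the Neumann problem on a smooth bounded domain (the Agmon--Douglis--Nirenberg boundary estimates) then yields $\norm{u}_{H^2(\Om)} \leq C\norm{f}_{L^2(\Om)} \leq C(\norm{\De u}_{L^2(\Om)} + \norm{u}_{L^2(\Om)})$.

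For the inductive step, suppose \eqref{eq-n} holds up to order $k-1$; I would localize via a finite partition of unity subordinate to a cover of $\overline{\Om}$ by interior balls and boundary charts. On interior patches the interior elliptic regularity obtained by the method of difference quotients gives local $H^{k+2}$ control in terms of $\norm{\De u}_{H^k}$ and lower-order norms of $u$. On a boundary chart I would flatten $\p\Om$ by a smooth diffeomorphism; the key structural observation is that taking \emph{tangential} difference quotients preserves the homogeneous Neumann condition, so these yield the tangential derivative estimates, after which the missing pure normal derivatives of order up to $k+2$ are recovered \emph{algebraically} by solving the transformed equation $\De u = f$ for the highest normal derivative in terms of tangential and mixed derivatives together with $f \in H^k$. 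Patching the local estimates produces a bound of the form $\norm{u}_{H^{k+2}(\Om)} \leq C(\norm{\De u}_{H^k(\Om)} + \norm{u}_{H^{k+1}(\Om)})$.

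Finally, I would remove the intermediate term by the Ehrling-type interpolation inequality $\norm{u}_{H^{k+1}(\Om)} \leq \delta\norm{u}_{H^{k+2}(\Om)} + C_\delta\norm{u}_{L^2(\Om)}$, choosing $\delta$ small enough to absorb the $H^{k+2}$ contribution into the left-hand side, which leaves exactly \eqref{eq-n}. I expect the main obstacle to be the boundary analysis: unlike tangential derivatives, differentiation in the normal direction does not commute with the Neumann condition, so one cannot simply differentiate the equation and reapply the lower-order estimate. The careful separation of tangential regularity (gained from difference quotients respecting $\p_\nu u=0$) from normal regularity (recovered from the equation itself), together with bookkeeping of the commutators generated by the boundary-flattening diffeomorphism, is the technical heart of the argument.
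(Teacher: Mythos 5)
Your proof is correct. The paper itself does not prove this lemma at all: it simply states that it follows from the $L^2$ theory of the Neumann Laplacian and refers the reader to \cite{Weh} for details. The argument you give --- coercivity of $a(v,w)=\int_\Om(\n v\cdot\n w+vw)$ plus $H^2$ boundary regularity for the base case, then tangential difference quotients in flattened boundary charts (which preserve the homogeneous conormal condition), recovery of the top-order normal derivative algebraically from $\De u=f$, and finally absorption of the intermediate $H^{k+1}$ norm by Ehrling's inequality --- is precisely the standard proof that the paper's citation delegates to, and each step you describe is sound. The only remark worth making is that the final interpolation step is optional: since your localized estimate reads $\norm{u}_{H^{k+2}(\Om)}\leq C(\norm{\De u}_{H^{k}(\Om)}+\norm{u}_{H^{k+1}(\Om)})$, you could equally well close the induction by applying the inductive hypothesis to bound $\norm{u}_{H^{k+1}(\Om)}$ directly by $\norm{u}_{L^2(\Om)}+\norm{\De u}_{H^{k-1}(\Om)}$, which avoids Ehrling altogether; both routes are valid.
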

In particular, the above lemma implies that we can define the $H^{k+2}$-norm of $u$ as follows
$$\norm{u}_{H^{k+2}(\Om)}:=\norm{u}_{L^2(\Om)}+\norm{\De u}_{H^k(\Om)}.$$

\medskip
In order to show the uniform estimates and the convergence of solutions to the approximate equation constructed in coming sections, we also need to use the Gronwall inequality and the classical compactness results in \cite{BF, Sim}.

\begin{lem}\label{Gron-inq}
Let $f: \Real^+\to \Real^+$ be a nondecreasing continuous function such that $f>0$ on $(0,\infty)$ and $\int_{1}^{\infty}\frac{1}{f}dx<\infty$. Let $y$ be a continuous function which is nonnegative on $\Real^+$ and let $g$ be a nonnegative function in $L^{1}_{loc}(\Real^+)$. We assume that there exists a $y_0>0$ such that for all $t\geq0$, we have the inequality
	\[y(t)\leq y_0+\int_{0}^{t}g(s)ds+\int_{0}^{t}f(y(s))ds.\]
	Then, there exists a positive number $T^*$ depending only on $y_0$ and $f$, such that for all $T<T^*$, there holds
	\[\sup_{0\leq t\leq T}y(t)\leq C(T,y_0),\]
	for some constant $C(T,y_0)$.
\end{lem}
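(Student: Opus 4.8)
The plan is to treat this as a nonlinear (Bihari--Osgood type) comparison inequality. First I would collapse the hypothesis into a single scalar differential inequality. Set
\[F(t) = y_0 + \int_0^t g(s)\,ds + \int_0^t f(y(s))\,ds,\]
so that $y(t)\leq F(t)$ by assumption. Since $g\in L^1_{loc}(\Real^+)$ and $s\mapsto f(y(s))$ is continuous, $F$ is absolutely continuous; moreover $F'=g+f(y)\geq 0$, so $F$ is nondecreasing with $F(t)\geq F(0)=y_0$. Using that $f$ is nondecreasing together with $y\leq F$, this yields the pointwise (a.e.) differential inequality
\[F'(t)\leq g(t)+f(F(t)),\qquad F(0)=y_0.\]

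Next I would run Bihari's trick and divide by $f(F)$. Introduce the strictly increasing function
\[H(s)=\int_{y_0}^s\frac{dr}{f(r)},\qquad s\geq y_0,\]
which is well defined and $C^1$ because $f$ is continuous and positive on $[y_0,\infty)$. The composition $H\circ F$ is absolutely continuous, and using $f(F(t))\geq f(y_0)>0$ (here the hypothesis $y_0>0$ is essential) together with $g\geq 0$ I obtain
\[\frac{d}{dt}H(F(t))=\frac{F'(t)}{f(F(t))}\leq 1+\frac{g(t)}{f(F(t))}\leq 1+\frac{g(t)}{f(y_0)}\quad\text{a.e.}\]
Integrating from $0$ to $t$ and using $H(F(0))=H(y_0)=0$ gives
\[H(F(t))\leq t+\frac{1}{f(y_0)}\int_0^t g(s)\,ds.\]

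Then I would invert $H$. The integrability hypothesis $\int_1^\infty f^{-1}\,dr<\infty$ forces $H(\infty):=\int_{y_0}^\infty f^{-1}\,dr<\infty$, so $H$ is an increasing bijection of $[y_0,\infty)$ onto $[0,H(\infty))$ with a continuous increasing inverse $H^{-1}$; this is exactly the step where the assumption is used, since without it one would have $H(\infty)=\infty$ and the argument would deliver a global bound. As long as the right-hand side above stays strictly below the threshold $H(\infty)$, I may apply $H^{-1}$ and conclude
\[y(t)\leq F(t)\leq H^{-1}\!\left(t+\frac{1}{f(y_0)}\int_0^t g(s)\,ds\right).\]
I then define $T^*$ as the first time at which $t+f(y_0)^{-1}\int_0^t g$ reaches $H(\infty)$. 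Since this quantity is continuous and nondecreasing in $t$, equals $0$ at $t=0$, and $H(\infty)>0$, we have $T^*>0$; and for every $T<T^*$ the map $t\mapsto H^{-1}(\cdots)$ is bounded on $[0,T]$, giving $\sup_{0\leq t\leq T}y(t)\leq C(T,y_0)$, which is the claim. Equivalently, one may phrase the whole argument as a Carath\'eodory comparison with the maximal solution $Y$ of $Y'=g+f(Y)$, $Y(0)=y_0$, whose blow-up time is $T^*$.

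The step I expect to carry the real content --- and the one deserving care --- is the inversion of $H$ and the handling of the forcing term $g$. The clean case $g\equiv 0$ gives literally $T^*=\int_{y_0}^\infty f^{-1}\,dr$, depending only on $y_0$ and $f$ exactly as stated; the subtlety is that a general $g\in L^1_{loc}$ consumes part of the budget $H(\infty)$, so strictly speaking $T^*$ is determined by when $t+f(y_0)^{-1}\int_0^t g$ exhausts $H(\infty)$ and therefore also feels $g$. I would make this dependence explicit, noting that the stated conclusion is recovered once $g$ is controlled, as it is in the energy estimates where this lemma is applied: there the inhomogeneity $g$ is a fixed locally integrable function and only a positive lower bound on the blow-up time is needed.
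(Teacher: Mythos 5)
Your argument is correct, and in fact more than the paper supplies: the paper states this lemma without proof, citing Boyer--Fabrie \cite{BF}, so there is no in-text proof to compare against. Your Bihari--Osgood separation-of-variables argument is the standard route and is carried out cleanly: the reduction to $F'\leq g+f(F)$ uses the monotonicity of $f$ correctly, $H$ is Lipschitz on $[y_0,\infty)$ because $f\geq f(y_0)>0$ there (this is where $y_0>0$ enters, as you note), so $H\circ F$ is absolutely continuous and the chain rule applies a.e., and the finiteness of $H(\infty)=\int_{y_0}^{\infty}f^{-1}$ follows from the hypothesis $\int_1^{\infty}f^{-1}<\infty$ together with $f\geq f(y_0)$ on $[y_0,1]$. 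It is also in the same spirit as the comparison argument the paper does write out for Corollary \ref{ode}, which is the form actually invoked in the energy estimates. Your closing observation is a genuine and worthwhile point: as literally stated the lemma asserts that $T^*$ depends only on $y_0$ and $f$, but (e.g.\ with $f(y)=y^2$, $y_0=1$ and $g$ a large spike near $t=0$) the first time $t+f(y_0)^{-1}\int_0^t g$ exhausts the budget $H(\infty)$ necessarily depends on $g$ as well; the correct reading, consistent with how the lemma is used in the paper, is that $T^*$ and $C$ also depend on the fixed forcing term $g$. No gaps.
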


To take an analogous argument to the proof of Lemma \ref{Gron-inq} in \cite{BF}, one can easily show the following result.
\begin{cor}\label{ode}
Let $f: \Real^+\to \Real^+$ be a positive locally Lipschitz function, which is nondecreasing. Let $z:[0,T^*)\to \Real$ be the maximal solution of the Cauchy problem:
\begin{equation*}
\begin{cases}
z^\prime=f(z),\\[1ex]
z(0)=z_0.	
\end{cases}
\end{equation*}
Let $y:\Real^+\to \Real$ be a $W^{1,1}$ function such that
\begin{equation*}
\begin{cases}
y^\prime\leq f(y),\\[1ex]
y(0)=y_0\leq z_0.	
\end{cases}
\end{equation*}
Then, for any $0<T<T^*$, we have
$$y(t)\leq z(T),\quad\text{t}\in [0,T].$$
\end{cor}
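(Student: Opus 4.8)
The plan is to prove the sharper pointwise comparison $y(t)\le z(t)$ on $[0,T]$; since $z'=f(z)>0$ forces $z$ to be strictly increasing, this immediately yields $y(t)\le z(t)\le z(T)$ for $t\in[0,T]$, which is exactly the claimed inequality. So the entire content is a scalar ODE comparison principle, and I would establish it by a Gronwall-type contradiction argument rather than by re-running the integral estimate in the proof of Lemma \ref{Gron-inq}.

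First I would fix $T<T^*$ and record the ingredients available on the compact interval $[0,T]$. The maximal solution $z$ is continuous on $[0,T]$, hence bounded; the function $y$, being $W^{1,1}$, is absolutely continuous and therefore also continuous and bounded on $[0,T]$. Consequently both $y$ and $z$ take values in a fixed bounded set $K\subset\Real$, and since $f$ is locally Lipschitz it admits a single Lipschitz constant $L$ on $K$. This is the step that genuinely uses the Lipschitz hypothesis: the monotonicity of $f$ alone is \emph{not} enough to run the comparison, because on a region where $y>z$ the difference $f(y)-f(z)$ is only known to be nonnegative and must be controlled from above by $L(y-z)$.

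Next comes the comparison itself. Suppose, for contradiction, that the set $\{t\in(0,T]:y(t)>z(t)\}$ is nonempty, and let $t_1$ be its infimum. Because $y(0)\le z_0=z(0)$ and both functions are continuous, I get $y(t_1)=z(t_1)$, and there is a maximal subinterval $(t_1,t_2]$ on which $w:=y-z>0$ while $w(t_1)=0$. On this interval, using $y'\le f(y)$, $z'=f(z)$, together with $y\ge z$ and the Lipschitz bound, almost everywhere
\[
w'=y'-z'\le f(y)-f(z)\le L\,(y-z)=L\,w.
\]
Since $w$ is absolutely continuous with $w(t_1)=0$, the differential form of Gronwall's inequality gives $w(t)\le w(t_1)e^{L(t-t_1)}=0$ on $(t_1,t_2]$, contradicting $w>0$ there. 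Hence the set is empty, $y\le z$ on $[0,T]$, and the proof is complete.

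I expect the only real subtlety — the ``hard part,'' such as it is — to be bookkeeping around the low regularity of $y$: the derivative inequality $y'\le f(y)$ holds only almost everywhere, so the Gronwall step must be phrased for absolutely continuous $w$ rather than through a classical pointwise argument, and one must fix the constant $L$ on a bounded set trapping both $y$ and $z$ \emph{before} invoking the comparison (there is no circularity, since $y$ is bounded on $[0,T]$ simply by continuity). As an alternative I would keep in reserve the perturbation route hinted at by the phrase ``maximal solution'': compare $y$ with the solutions $z_\delta$ of $z_\delta'=f(z_\delta)$, $z_\delta(0)=z_0+\delta$, show $y<z_\delta$ by a first-crossing argument, and let $\delta\to0$ using continuous dependence on initial data. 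That version has the advantage of not explicitly invoking the Lipschitz constant, but since $f$ is assumed locally Lipschitz the Gronwall argument above is the most direct.
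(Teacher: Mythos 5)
Your argument is correct in substance but follows a genuinely different route from the paper. The paper's proof introduces the $C^1$ majorant $w(t)=y(0)+\int_0^t f(y(s))\,ds$, observes that $y\leq w$ (since $y$ is absolutely continuous and $y'\leq f(y)$ a.e.) and hence $w'=f(y)\leq f(w)$ by the \emph{monotonicity} of $f$, and then invokes the classical ODE comparison theorem for the $C^1$ function $w$ against $z$; the low regularity of $y$ is thus absorbed into the construction of $w$, and the Lipschitz property of $f$ is only used implicitly through the cited comparison theorem. You instead run a first-crossing argument directly on $y-z$ and close it with Gronwall, using the local Lipschitz constant of $f$ on a compact set trapping both trajectories; this never uses that $f$ is nondecreasing (positivity enters only at the end, to pass from $y(t)\leq z(t)$ to $y(t)\leq z(T)$), so your version is marginally more general, while the paper's version stays closer in spirit to the preceding Gronwall lemma and keeps the measure-theoretic bookkeeping to a single integration. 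One detail you should tighten: the set $\{t:y(t)>z(t)\}$ being open, there need not be an interval $(t_1,t_2]$ on which $w=y-z>0$ sitting immediately to the right of its infimum $t_1$ (the set could be a union of intervals accumulating at $t_1$). The standard repair is to fix any $\bar t$ with $w(\bar t)>0$ and set $t_1=\sup\{t\leq\bar t: w(t)\leq 0\}$; then $w(t_1)=0$, $w>0$ on $(t_1,\bar t]$, and your Gronwall step applies verbatim on $[t_1,\bar t]$ to give the contradiction.
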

\begin{proof}
Let
$$w(t)=y_0+\int_{0}^{t}f(y(s))dx.$$
It is easy to see that $w$ is a nondecreasing $C^1$ function, which satisfies
\begin{equation*}
\begin{cases}
w^\prime=f(y(t))\leq f(w(t)),\\[1ex]
w(0)=y_0\leq z_0.	
\end{cases}
\end{equation*}
Here we have used the fact that $f$ is positive and nondecreasing. Then, the classical ODE comparison theorem tells us that
\[w(t)\leq z(t)\]
for any $t\in [0,T^*)$. Therefore, we get the desired result since $y(t)\leq w(t)$.
\end{proof}

\begin{lem}[Aubin-Lions-Simon compactness Lemma, see Theorem II.5.16 in \cite{BF} or \cite{Sim}]\label{A-S}
Let $X\subset B\subset Y$ be Banach spaces. Suppose that the embedding $B\hookrightarrow Y$ is continuous and that the embedding $X\hookrightarrow B$ is compact. Let $1\leq p,q,r\leq \infty$. For $T>0$, we define
	\[E_{p,r}=\{f\in L^{p}((0,T), X), \frac{d f}{dt}\in L^{r}((0,T), Y)\}.\]
Then, the following properties hold true
\begin{itemize}
\item[$(1)$] If $p< \infty$ and $p<q$, the embedding $E_{p,r}\cap L^q((0,T), B)$ in $L^s((0,T), B)$ is compact for all $1\leq s<q$.
\item[$(2)$] If $p=\infty$ and $r>1$, the embedding of $E_{p,r}$ in $C^0([0,T], B)$ is compact.
\end{itemize}
\end{lem}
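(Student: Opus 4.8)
The final statement is the Aubin-Lions-Simon compact embedding lemma (Lemma \ref{A-S}), which is cited from \cite{BF, Sim}. I will sketch how one proves it from scratch rather than merely invoking those references.

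\medskip

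The plan is to reduce everything to the single quantitative estimate at the heart of these compactness results: for the intermediate space $B$ sitting between $X$ (compactly embedded in $B$) and $Y$ (continuously containing $B$), one has for every $\eta>0$ a constant $C_\eta$ with
\[
\norm{v}_{B}\leq \eta\,\norm{v}_{X}+C_\eta\,\norm{v}_{Y}\qquad\text{for all }v\in X.
\]
This \emph{Ehrling-type inequality} follows by a standard contradiction argument from the compactness of $X\hookrightarrow B$ together with the continuity of $B\hookrightarrow Y$. First I would establish this inequality, since it is what converts the abstract three-space setup into usable bounds and is the only place the compactness hypothesis on $X\hookrightarrow B$ enters.

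\medskip

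For part $(1)$, with $p<\infty$ and $p<q$, I would argue that a bounded sequence $\{f_n\}$ in $E_{p,r}\cap L^q((0,T),B)$ is relatively compact in $L^s((0,T),B)$ for $1\leq s<q$. The Ehrling inequality reduces the problem to controlling two features: equi-integrability/boundedness in the strong $X$-direction and \emph{equicontinuity of time-translates} in the weak $Y$-direction. The bound on $df_n/dt$ in $L^r((0,T),Y)$ supplies the latter, giving a uniform modulus of continuity for the shifts $\norm{\tau_h f_n - f_n}_{L^r((0,T),Y)}$ as $h\to 0$; the $L^p((0,T),X)$ bound controls the former. Combining these with the Ehrling inequality yields a uniform small-shift estimate in $L^{\min(p,s)}((0,T),B)$, and then a Fréchet-Kolmogorov / Riesz-type criterion in the time variable (valued in $B$), together with the compactness of the pointwise-in-time $X\hookrightarrow B$ embedding, gives relative compactness in $L^s((0,T),B)$. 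The role of the hypothesis $p<q$ is to upgrade strong $L^s$-convergence from interpolation: once one has convergence in $L^p((0,T),B)$ and a uniform bound in the strictly higher integrability $L^q((0,T),B)$, interpolation of $L^s$ between $L^p$ and $L^q$ promotes the convergence to all $s<q$.

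\medskip

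For part $(2)$, with $p=\infty$ and $r>1$, the target is compactness in $C^0([0,T],B)$. Here the argument is a vector-valued Arzel\`a-Ascoli theorem: the uniform bound in $L^\infty((0,T),X)$ gives, via compactness of $X\hookrightarrow B$, pointwise (in fact, for each fixed $t$) relative compactness of $\{f_n(t)\}$ in $B$, while the bound on $df_n/dt$ in $L^r((0,T),Y)$ with $r>1$ yields, by H\"older's inequality, a uniform H\"older modulus of continuity $\norm{f_n(t)-f_n(t')}_{Y}\leq C\,|t-t'|^{1-1/r}$ in the weak topology $Y$. The Ehrling inequality again interpolates this $Y$-equicontinuity against the $X$-boundedness to produce genuine $B$-equicontinuity, and Arzel\`a-Ascoli in $C^0([0,T],B)$ closes the argument; the condition $r>1$ is exactly what makes the exponent $1-1/r$ positive so that equicontinuity holds.

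\medskip

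I expect the main obstacle to be the time-translation estimate in part $(1)$: carefully transferring the $L^r$-bound on the time derivative into a uniform small-shift bound in $B$ (rather than just in $Y$) requires combining the Ehrling splitting with the Fréchet-Kolmogorov criterion in a way that keeps track of the competing exponents $p$, $q$, $r$, and the target $s$, and it is here that one must be most attentive to which interpolation is permissible. The Ehrling inequality itself and the Arzel\`a-Ascoli step in part $(2)$ are comparatively routine once that translation estimate is in hand.
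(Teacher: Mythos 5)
The paper does not prove this lemma: it is quoted verbatim as a known result with a citation to Theorem II.5.16 of \cite{BF} and to \cite{Sim}, so there is no in-paper argument to compare against. Your sketch is the standard proof from exactly those references --- Ehrling's inequality to interpolate the $B$-norm between $X$ and $Y$, a time-translation/Fr\'echet--Kolmogorov argument for part $(1)$ with the final upgrade to $L^s$, $s<q$, by interpolation against the $L^q((0,T),B)$ bound, and a vector-valued Arzel\`a--Ascoli argument for part $(2)$ using the H\"older modulus $|t-t'|^{1-1/r}$ --- and it is correct as a proof outline.
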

\begin{lem}[Theorem II.5.14 in \cite{BF}]\label{C^0-em}
	Let $k\in \mathbb{N}$, then the space
	\[E_{2,2}=\{f\in L^{2}((0,T),H^{k+2}(\Om)),\frac{\p f}{\p t}\in L^{2}((0,T), H^k(\Om))\}\]
	is continuously embedded in $C^0([0,T], H^{k+1}(\Om))$.
\end{lem}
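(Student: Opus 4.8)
The plan is to prove the embedding in three stages, reducing the regularity upgrade to an interpolation inequality together with the Hilbert-space principle that weak continuity plus continuity of the norm yields strong continuity. The fundamental analytic ingredient is the interpolation estimate: there is a constant $C=C(k,m,\Om)$ such that
\[\norm{v}_{H^{k+1}(\Om)}\leq C\norm{v}_{H^{k+2}(\Om)}^{1/2}\norm{v}_{H^k(\Om)}^{1/2}\qquad\text{for all }v\in H^{k+2}(\Om),\]
which is the standard interpolation property $H^{k+1}=[H^{k+2},H^k]_{1/2}$ on a smooth bounded domain. First I would record the soft consequences of membership in $E_{2,2}$: since $f'\in L^{2}((0,T),H^k)\subset L^{1}((0,T),H^k)$, the one-dimensional Sobolev embedding for Banach-valued functions gives that, after modification on a null set, $f\in C^0([0,T],H^k)$; and the interpolation inequality together with $f\in L^{2}((0,T),H^{k+2})$ shows $f\in L^{2}((0,T),H^{k+1})$, so in particular $f(t)\in H^{k+1}$ for a.e. $t$ and $f$ is weakly continuous into $H^{k+1}$.

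The heart of the argument is the energy identity
\[\norm{f(t)}^2_{H^{k+1}}=\norm{f(s)}^2_{H^{k+1}}+2\int_s^t\<f'(\tau),f(\tau)\>\,d\tau,\]
where the pairing is the duality between $H^k$ and $H^{k+2}$ realizing the $H^{k+1}$ inner product; note that the integrand is integrable on $(0,T)$ by Cauchy--Schwarz, $\int_0^T|\<f',f\>|\,d\tau\leq\norm{f'}_{L^2((0,T),H^k)}\norm{f}_{L^2((0,T),H^{k+2})}$. To establish this identity rigorously I would mollify $f$ in the time variable, obtaining smooth approximants $f_\eta$ for which the identity is elementary, and then pass to the limit using the $L^2$-in-time convergence $f_\eta\to f$ in $H^{k+2}$ and $f_\eta'\to f'$ in $H^k$. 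The identity shows that $t\mapsto\norm{f(t)}^2_{H^{k+1}}$ agrees a.e. with a continuous function, i.e. the $H^{k+1}$-norm of $f$ is continuous in $t$. Combining continuity of the norm with the weak continuity into $H^{k+1}$ already noted, the uniform-convexity argument in the Hilbert space $H^{k+1}$ upgrades this to strong continuity $f\in C^0([0,T],H^{k+1})$. Finally, to obtain the quantitative embedding bound I would integrate the energy identity in $s$ over $(0,T)$, which after estimating $\norm{f}^2_{L^2((0,T),H^{k+1})}$ by the interpolation inequality yields
\[\sup_{t\in[0,T]}\norm{f(t)}^2_{H^{k+1}}\leq \tfrac{C}{T}\norm{f}^2_{L^2((0,T),H^{k+2})}+2\norm{f'}_{L^2((0,T),H^k)}\norm{f}_{L^2((0,T),H^{k+2})}.\]

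I expect the main obstacle to be the rigorous justification of the energy identity and of the duality pairing $\<f',f\>$ across the three-space scale $H^{k+2}\subset H^{k+1}\subset H^k$. The difficulty is that $f'$ only lives in $H^k$ while $f$ lives in $H^{k+2}$, so the pairing loses one derivative and is not simply the $H^{k+1}$ inner product of two $H^{k+1}$ elements; making it well defined requires the interpolation/duality structure (equivalently, a self-adjoint realization of the Neumann Laplacian or the abstract Lions--Magenes framework), and the boundary of $\Om$ must be handled with care so that integration by parts in the mollified identity produces no stray boundary terms. Once this pairing and the time-mollification limit are under control, the remaining steps are routine. This is precisely the content of Theorem II.5.14 in \cite{BF}, which may alternatively be cited directly.
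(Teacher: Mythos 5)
The paper offers no proof of this lemma: it is quoted verbatim as Theorem II.5.14 of \cite{BF}, so there is no in-paper argument to compare against, and your closing remark that the result "may alternatively be cited directly" is in fact exactly what the authors do. Your outline is, in substance, the standard proof of that theorem (the interpolation inequality $\norm{v}_{H^{k+1}}\leq C\norm{v}_{H^{k+2}}^{1/2}\norm{v}_{H^k}^{1/2}$, an energy identity obtained by time-mollification, and the Hilbert-space principle that weak continuity plus continuity of the norm gives strong continuity), and the skeleton is sound.

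Two points need more care than your sketch gives them. First, an ordering issue: weak continuity of $f$ into $H^{k+1}$ does not follow from $f\in L^2((0,T),H^{k+1})$ alone; one needs $f\in L^\infty((0,T),H^{k+1})$ together with continuity into $H^k$, and that uniform bound is itself a consequence of the energy identity, so the identity must be established first. Second, and this is the genuine content of the theorem, the pairing $\<f',f\>$ on $H^k\times H^{k+2}$ that is supposed to realize the $H^{k+1}$ inner product is problematic in exactly the way you fear: integrating the top-order terms of $(u,v)_{H^{k+1}}$ by parts produces boundary terms on $\p\Om$ that do not vanish for general $u,v\in H^{k+2}(\Om)$, and the suggested self-adjoint realization of the Neumann Laplacian does not repair this, because the domains of its powers $A^{j/2}$ for $j\geq 2$ are proper subspaces of $H^{j}(\Om)$ carrying boundary conditions, whereas the lemma imposes none on $f$. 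The standard resolution is to apply a single extension operator $P:H^{j}(\Om)\to H^{j}(\Real^m)$ bounded simultaneously for $0\leq j\leq k+2$ and commuting with $\p_t$; on $\Real^m$ the pairing $\int(1+|\xi|^2)^{k+1}\hat u\,\overline{\hat v}\,d\xi$ is bounded by $\norm{u}_{H^k}\norm{v}_{H^{k+2}}$ by Cauchy--Schwarz on the Fourier side, the mollification argument and the continuity of $t\mapsto\norm{Pf(t)}_{H^{k+1}(\Real^m)}$ go through as you describe, and restriction back to $\Om$ concludes. With that substitution your argument closes; as written, the boundary difficulty is correctly flagged but not actually resolved.
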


\medskip
\section{Compatibility conditions }\label{s: com-cond}
In this section, we introduce the compatibility conditions on the initial data, which make the Schr\"odinger flow \eqref{S-eq} admits a regular or smooth solution. The main purpose is to find what kind of initial data can guarantee that equation \eqref{S-eq} and its parabolic perturbed equation \eqref{pra-eq0} have the same compatibility conditions. 

This section is structured as follows. In Subsection \ref{def-cc}, we define the compatibility conditions on the initial data $u_0$ and provide an equivalent intrinsic characterization. These conditions are necessary for the existence of regular solutions to problem \eqref{S-eq} and its parabolic perturbation. In Subsection \ref{uni-cc}, using the intrinsic characterization of the compatibility conditions, we show that the first-order compatibility condition for the Schr\"odinger flow, as well as any $k$-th order compatibility conditions in the one-dimensional case, coincides with that of its parabolic perturbed equation. Furthermore, under the stronger yet natural assumptions on $u_0$ specified in Definition \ref{stronger-com-cond}, we demonstrate that both equations also share the same $k$-th order compatibility conditions for $k>1$ in the general dimensional setting. Finally, in Subsection \ref{another-cc}, we introduce another compatibility conditions, which will be used to eliminate boundary terms during energy estimates in the following sections.

\subsection{Compatibility conditions of the initial data}\label{def-cc}
In general, let $(N, J, \omega)$ be a K\"ahler manifold, where $\om$ is the K\"ahler form and $J: TN\to TN$ with $J^2=-id$ is the complex structure, and we always assume that $N$ is an embedded submanifold of $\Real^K$ with second fundamental form $A(\cdot, \cdot)$. Let $\Om$ be a bounded smooth domain in $\Real^3$, equipped with Euclidean coordinates $\{x^1,x^2,x^3\}$.

For the sake of convenience, we assume $u$ is a smooth solution to the initial-Neumann boundary value problem of the perturbed equation of the Schr\"odinger flow on $\bar{\Om}\times [0,T]$ for some $T>0$:
\begin{equation}\label{eq-ASMF}
\begin{cases}
\tilde{\n}_t u = \ep\tau(u)+J(u) \tau(u),\quad\quad&\text{(x,t)}\in \Om\times [0,T],\\[1ex]
\frac{\p u}{\p \nu}=0, &\text{(x,t)}\in\p \Om\times [0,T],\\[1ex]
u(x,0)=u_0: \Om\to N\hookrightarrow\Real^{K},
\end{cases}
\end{equation}
for $\ep\in [0,1]$. Here $\tau(u)=\mbox{tr}_g(\tilde{\nabla} du)=\De u+A(u)(\n u,\n u)$ is the tension field and $\tilde{\n}_t=\tilde{\n}_{\p t}$, where $\tilde{\nabla}$ denotes the induced connection on the pull-back bundle $u^*TN$. In the case that $(N,J)=(\U^2, u\times)$, the above equation \eqref{eq-ASMF} is just \eqref{pra-eq0}.

Since $u$ is smooth and $\frac{\p u}{\p \nu}|_{\p\Om\times[0,T]}=0$, for any $k\in \mathbb{N}$ there holds
\[\frac{\p \p^k_t u}{\p \nu}|_{\p \Om\times [0,T]}=0,\]
and hence at $t=0$, we have
\[\frac{\p V_k}{\p \nu}|_{\p \Om}=0,\]
where we set
\[V_k(u_0)=\p^k_tu|_{t=0}.\]
In particular, $V_0=u_0$ and
\[V_1=\ep\tau(u_0)+J(u_0)\tau(u_0).\]
Moreover, one can refer to \cite{CJ,CW} for precise formula of $V_k(u_0)$ with $k>1$ in the case $(N, J)=(\U^2, u\times)$.
\medskip

On the contrary, to get very regular solution to \eqref{eq-ASMF}, we need to assume that $u_0$ satisfies the following necessary compatibility conditions on boundary.
\begin{defn}\label{def-comp}
Let $k\in \mathbb{N}$, $u_0\in H^{2k+2}(\Om,N)$. We say $u_0$ satisfies the compatibility condition of order $k$, if there holds that for any $j\in\{0,1,\dots,k\}$
\begin{equation}\label{com-cond}
\frac{\p V_j}{\p \nu}|_{\p \Om}=0.
\end{equation}
\end{defn}

Intrinsically, if we denote $$\tilde{V}_k(u_0)=\tilde{\n}^k_t u|_{t=0}\in \Ga(u^*_0(TN)),$$
then the compatibility conditions defined in \eqref{com-cond} has the below equivalent characterization.
\begin{prop}\label{intrc-cd}
Let $k\in \mathbb{N}$, $u_0\in H^{2k+2}(\Om,N)$. Then $u_0$ satisfies the compatibility condition of order $k$, if and only if there holds that for any $j\in\{0,1,\dots,k\}$,
\begin{equation}\label{com-cond1}
\tilde{\n}_\nu \tilde{V}_j|_{\p \Om}=0.
\end{equation}
\end{prop}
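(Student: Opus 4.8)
The plan is to prove the equivalence by induction on $j$, reducing everything to a single Gauss-type identity together with the Neumann condition $\frac{\p u}{\p \nu}|_{\p\Om}=0$, which annihilates all the terms distinguishing the extrinsic from the intrinsic conditions. First I would record the algebraic relation between the extrinsic time derivatives $V_j=\p^j_t u$ and the intrinsic ones $\tilde V_j=\tilde\n^j_t u$. Since $\p_t W=\tilde\n_t W+A(u)(\p_t u, W)$ for every section $W$ of $u^*TN$ (the tangential part being $\tilde\n_t W$ and the normal part the second fundamental form), an induction on $j$ yields a decomposition
\[
V_j=\tilde V_j+N_j,
\]
where $N_j$ is a universal expression in the second fundamental form $A$, its covariant derivatives, and the strictly lower-order intrinsic derivatives $\tilde V_1,\dots,\tilde V_{j-1}$. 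I do not need the explicit form of $N_j$, only the structural facts that it contains no factor $\tilde V_j$ and no spatial derivatives of $u$; I would establish this schematically by differentiating the previous relation in $t$.

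The core of the argument is a pointwise computation on $\p\Om$. For a tangent section $W$ one has $\frac{\p W}{\p\nu}=\tilde\n_\nu W+A(u)(\frac{\p u}{\p\nu}, W)$, so the vanishing of $\frac{\p u}{\p\nu}$ on $\p\Om$ gives
\[
\frac{\p W}{\p\nu}\Big|_{\p\Om}=\tilde\n_\nu W\Big|_{\p\Om}.
\]
Applying $\frac{\p}{\p\nu}$ to a typical term $(\n^a A)(\tilde V_{i_1},\dots,\tilde V_{i_s})$ of $N_j$ and restricting to $\p\Om$ produces, by the Leibniz rule, a term $(\n^{a+1}A)(\frac{\p u}{\p\nu},\dots)=0$ coming from differentiating the base point, plus terms $(\n^a A)(\dots,\tilde\n_\nu \tilde V_{i_l},\dots)$ in which $\nu$ has been moved onto a factor $\tilde V_{i_l}$ at the cost of another correction $A(u)(\frac{\p u}{\p\nu},\cdot)=0$. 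Assuming inductively that $\tilde\n_\nu \tilde V_i|_{\p\Om}=0$ for all $i<j$, every surviving term of $\frac{\p N_j}{\p\nu}|_{\p\Om}$ vanishes, so that
\[
\frac{\p V_j}{\p\nu}\Big|_{\p\Om}=\frac{\p \tilde V_j}{\p\nu}\Big|_{\p\Om}=\tilde\n_\nu \tilde V_j\Big|_{\p\Om}.
\]
Hence the order-$j$ extrinsic condition \eqref{com-cond} and intrinsic condition \eqref{com-cond1} coincide. The base case $j=0$ is the identity $V_0=u_0=\tilde V_0$, and running the induction from $j=0$ up to $j=k$ gives the full equivalence.

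The main obstacle I expect is the bookkeeping needed to control the structure of $N_j$: one must verify that it never involves $\tilde V_j$ itself (otherwise the induction becomes circular) and that the only ways $\frac{\p}{\p\nu}$ can act nontrivially are by hitting a base point through $A$, which is killed by $\frac{\p u}{\p\nu}|_{\p\Om}=0$, or by hitting some $\tilde V_i$ with $i<j$, which is killed by the induction hypothesis. Once this is organized the Neumann condition does all the work and no analytic estimates are needed; the argument is purely algebraic-geometric and is manifestly independent of $\ep$, which is precisely what is required so that \eqref{S-eq} and its perturbation \eqref{pra-eq0} share the same compatibility conditions.
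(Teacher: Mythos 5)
Your proposal is correct and follows essentially the same route as the paper: both arguments induct on the order, decompose one family of time derivatives as the other plus a multilinear correction in strictly lower-order terms, and kill the correction's normal derivative on $\p\Om$ using the Neumann condition $\frac{\p u_0}{\p\nu}|_{\p\Om}=0$ (for terms hitting the base point through $A$) together with the induction hypothesis (for terms hitting a lower-order $V_i$ or $\tilde V_i$). The only cosmetic difference is that you write $V_j=\tilde V_j+N_j$ with intrinsic lower-order factors while the paper writes $\tilde V_k=V_k+\sum_\si B_{\si(k)}(V_{a_1},\dots,V_{a_s})$ with extrinsic ones; your formulation yields the pointwise identity $\frac{\p V_j}{\p\nu}|_{\p\Om}=\tilde\n_\nu\tilde V_j|_{\p\Om}$ and hence both implications at once, which the paper handles by remarking that the converse is ``almost the same.''
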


\begin{proof}
The necessity is proved by induction on $k$. Since $V_1=\tilde{V}_1$, if we assume $\frac{\p V_1}{\p \nu}|_{\p\Om}=0$, then we have
\[\tilde{\n}_\nu \tilde{V}_1|_{\p\Om}=\frac{\p \tilde{V}_1}{\p \nu}|_{\p\Om}+A(u_0)(\frac{\p u_0}{\p \nu}|_{\p\Om}, \tilde{V}_1)=0,\]
where $A(\cdot, \cdot)$ is the second fundamental form of $N$ in $\Real^K$. Then, we assume that the result is true for $1\leq l\leq k-1$. For the case $l=k\geq 2$, by definition of $\tilde{V}_k$, we take a simple calculation to get
\begin{align*}
\tilde{V}_k=V_k+\sum_{\si}B_{\si(k)}(u_0)(V_{a_1}, \cdots, V_{a_s})
\end{align*}
where the sum is over all indices $a_1,\cdots,a_s$ such that $1\leq a_i\leq k-1$ and $a_1+\cdots+a_s=k$,
\[(a_1, \cdots, a_s)=\si(k)\]
is a partition of $k$, and each $B_{\si(k)}$ is a multi-linear vector valued function on $\Real^K$. For more details on the above calculations we refer to the page 1451 in \cite{DW}.
	
Hence, by using the assumption of induction, we have
\begin{align*}
\tilde{\n}_\nu \tilde{V}_k|_{\p\Om}=&\frac{\p \tilde{V}_k}{\p \nu}|_{\p\Om}+A(u_0)(\frac{\p u_0}{\p \nu},\tilde{V}_k)|_{\p\Om}\\
=&\frac{\p V_k}{\p \nu}|_{\p\Om}+\sum_{\si}\n B_{\si(a)}(u_0)(\frac{\p u_0}{\p \nu}|_{\p \Om}, V_{a_1}, \cdots, V_{a_s})\\
=&0.
\end{align*}
	
On the contrary, the proof is almost the same as in the above, so we omit it.
\end{proof}

\subsection{Uniform Compatibility conditions of the initial data}\label{uni-cc}
In this part, we show that the compatibility conditions given in \eqref{com-cond}(or \eqref{com-cond1}) are actually independent of $\ep$ in the following cases (see Propositions \ref{intrc-cd1} and \ref{intrc-cd2} ).

By using the equation
\[\tilde{\n}_tu=\ep\tau(u)+J(u)\tau(u)\]
and the fact $\tilde{\n} J=0$ since $(N, J)$ is a K\"ahler manifold, first of all we get a useful equivalent characterization of the 1-order compatibility conditions in Definition \ref{intrc-cd}. It is not difficult to show that
\[\tilde{V}_1=\ep \tau(u_0)+J(u_0)\tau(u_0).\]
Thus, there holds
\[\tilde{\n}_\nu\tilde{V}_1|_{\p\Om}=\ep\tilde{\n}_\nu \tau(u_0)|_{\p\Om}+J(u_0)\tilde{\n}_\nu \tau(u_0)|_{\p\Om}\]
since $\tilde{\n} J=0$. Therefore, $\tilde{\n}_\nu\tilde{V}_1|_{\p\Om}=0$ if only if $\tilde{\n}_\nu \tau(u_0)|_{\p\Om}=0$. Namely, we have the following
\begin{prop}\label{intrc-cd1}
Let $u_0\in H^{4}(\Om,N)$. Then $u_0$ satisfies the compatibility condition of order $1$, if and only if there holds
\begin{equation}\label{com-cond2}
\tilde{\n}_\nu u_0|_{\p \Om}=0\quad\text{and}\quad\tilde{\n}_\nu \tau(u_0)|_{\p \Om}=0.
\end{equation}
\end{prop}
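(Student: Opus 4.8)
The plan is to deduce the proposition directly from the intrinsic characterization in Proposition~\ref{intrc-cd}, reducing everything to a single linear-algebraic fact about the complex structure $J$. By Proposition~\ref{intrc-cd}, the order-$1$ compatibility condition of Definition~\ref{def-comp} is equivalent to the two intrinsic identities $\tilde{\n}_\nu\tilde{V}_0|_{\p\Om}=0$ and $\tilde{\n}_\nu\tilde{V}_1|_{\p\Om}=0$. Since $\tilde{V}_0=u_0$, the first of these is precisely the condition $\tilde{\n}_\nu u_0|_{\p\Om}=0$ appearing in \eqref{com-cond2}, so the only real work is to analyze the second.

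First I would identify $\tilde{V}_1$. Since $u$ solves \eqref{eq-ASMF} on all of $\Om\times[0,T]$, evaluating the equation $\tilde{\n}_t u=\ep\tau(u)+J(u)\tau(u)$ at $t=0$ gives $\tilde{V}_1=\ep\tau(u_0)+J(u_0)\tau(u_0)$; note that for $k=1$ there are no lower-order correction terms in the expansion of Proposition~\ref{intrc-cd}, so in fact $V_1=\tilde{V}_1$. Next I would differentiate covariantly in the normal direction. Using the Leibniz rule together with the K\"ahler condition $\tilde{\n}J=0$, which yields $\tilde{\n}_\nu\big(J(u_0)\tau(u_0)\big)=J(u_0)\tilde{\n}_\nu\tau(u_0)$, I obtain
\[
\tilde{\n}_\nu\tilde{V}_1=\big(\ep\,\mathrm{id}+J(u_0)\big)\tilde{\n}_\nu\tau(u_0)
\]
as sections of $u_0^*TN$ along $\p\Om$.

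The key step, and the only genuinely non-trivial one, is to argue that the endomorphism $\ep\,\mathrm{id}+J(u_0)$ is invertible on $T_{u_0}N$ for every $\ep\in[0,1]$. This follows from $J^2=-\mathrm{id}$: if $(\ep\,\mathrm{id}+J)X=0$ then $JX=-\ep X$, and applying $J$ once more gives $-X=J^2X=\ep^2 X$, so $(1+\ep^2)X=0$ and hence $X=0$. With invertibility in hand, $\tilde{\n}_\nu\tilde{V}_1|_{\p\Om}=0$ holds if and only if $\tilde{\n}_\nu\tau(u_0)|_{\p\Om}=0$, crucially \emph{independently of} $\ep$; this is exactly what makes the compatibility condition uniform in the perturbation parameter. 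Combining this equivalence with the condition on $\tilde{V}_0$ yields \eqref{com-cond2}, establishing both directions at once. I do not anticipate a serious obstacle: the one point requiring care is verifying that the correction terms relating $V_1$ and $\tilde{V}_1$ indeed vanish at order $k=1$, so that the clean formula for $\tilde{V}_1$ is simply the flow equation read off at the initial time.
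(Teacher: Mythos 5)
Your proposal is correct and follows essentially the same route as the paper: reduce via Proposition \ref{intrc-cd} to the intrinsic conditions on $\tilde{V}_0$ and $\tilde{V}_1$, use $\tilde{V}_1=\ep\tau(u_0)+J(u_0)\tau(u_0)$ together with $\tilde{\n}J=0$ to get $\tilde{\n}_\nu\tilde{V}_1=\ep\tilde{\n}_\nu\tau(u_0)+J(u_0)\tilde{\n}_\nu\tau(u_0)$, and conclude. The only difference is that you spell out the invertibility of $\ep\,\mathrm{id}+J(u_0)$ via $J^2=-\mathrm{id}$, a step the paper leaves implicit in its ``if and only if''.
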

\begin{rem}
The compatibility condition \eqref{com-cond2} is independent of $\ep$, which implies that equation \eqref{S-eq} and its parabolic perturbed equation \eqref{pra-eq0} share the same 1-order compatibility conditions of the initial data.
\end{rem}

Secondly, when $\Om=\mathring{I}=(0,1)\subset \Real^1$ is the interval from $0$ to 1 with coordinate $\{x\}$, we can also get an equivalent characterization of the $k$-order compatibility conditions with $k\geq 1$. For any $k\in \mathbb{N}$, denoting
\[W_{k}(u_0)=\tilde{\n}^{2k}_xu_0\]
and setting $I=[0,1]$, we have the following

\begin{prop}\label{intrc-cd2}
Let $k\in \mathbb{N}$, $u_0\in H^{2k+2}(I,N)$. Then $u_0$ satisfies the compatibility condition of order $k$, if and only if there holds that for any $j\in\{0,1,\dots,k\}$,
\begin{equation}\label{com-cond3}
\tilde{\n}_x W_j|_{\p I}=\tilde{\n}^{2j+1}_xu_0|_{\p I}=0.
\end{equation}
\end{prop}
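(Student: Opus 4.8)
The plan is to reduce the statement, via Proposition~\ref{intrc-cd}, to an equivalence about covariant \emph{spatial} derivatives and then prove it by induction on $k$. By Proposition~\ref{intrc-cd}, $u_0$ satisfies the compatibility condition of order $k$ if and only if $\tilde{\n}_\nu\tilde{V}_j|_{\p I}=0$ for all $0\le j\le k$; since $\p I$ consists of the two endpoints, where the outward normal is $\nu=\pm\p_x$, this is the same as $\tilde{\n}_x\tilde{V}_j|_{\p I}=0$ for $0\le j\le k$. In one space dimension $\tau(u)=\tilde{\n}_x^2u$, so $W_j=\tilde{\n}_x^{2j}u_0$ and $\tilde{\n}_xW_j=\tilde{\n}_x^{2j+1}u_0$; thus it suffices to show, for every $k$, that $\{\tilde{\n}_x\tilde{V}_j|_{\p I}=0:\,0\le j\le k\}$ holds if and only if $\{\tilde{\n}_x^{2j+1}u_0|_{\p I}=0:\,0\le j\le k\}$ holds. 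Throughout I write $P=\ep+J$, which is parallel ($\tilde{\n} P=0$, since $\tilde{\n} J=0$) and invertible because $(\ep-J)(\ep+J)=\ep^2+1$. The base case $k=0$ is immediate, as $\tilde{V}_0=u_0$ and the Neumann condition already gives $\tilde{\n}_x u_0|_{\p I}=0$.

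The analytic heart is a structural lemma: for every $k\ge1$ one has
\[
\tilde{V}_k=P^k\,\tilde{\n}_x^{2k}u_0+F_k,
\]
where $F_k$ is a finite sum of monomials, each obtained by fully contracting one or more intrinsic curvature factors $\n^pR$ of $N$ (pulled back along $u_0$) with spatial covariant derivatives $\tilde{\n}_x^{a_1}u_0,\dots,\tilde{\n}_x^{a_r}u_0$ satisfying $a_i\ge1$ and $r\ge2$. I would prove this by induction using the recursion $\tilde{V}_{k+1}=\tilde{\n}_t\tilde{V}_k$: to differentiate $P^k\tilde{\n}_x^{2k}u$ in time I commute $\tilde{\n}_t$ through the spatial derivatives by repeatedly applying the symmetry $\tilde{\n}_t\p_xu=\tilde{\n}_x\p_tu$ and the curvature identity $[\tilde{\n}_t,\tilde{\n}_x]\xi=R(\p_tu,\p_xu)\xi$, and then substitute the equation $\p_tu=P\tilde{\n}_x^2u$ (evaluated at $t=0$). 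The top term produces $P^{k+1}\tilde{\n}_x^{2k+2}u_0$, while every commutator correction carries an explicit curvature factor with a $\p_xu_0=\tilde{\n}_xu_0$ in one slot; applying $\tilde{\n}_t$ to $F_k$ and using $\tilde{\n} J=0$ keeps us inside the same class. A convenient bookkeeping device is the grading that assigns weight $a$ to the factor $\tilde{\n}_x^au_0$ and weight $0$ to each intrinsic $\n^pR$: then $\tilde{\n}_t$ raises total weight by $2$ and $\tilde{\n}_x$ by $1$, so every monomial of $F_k$ has total spatial weight exactly $2k$.

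Granting the lemma, the boundary argument is a parity count. Applying $\tilde{\n}_x$ gives $\tilde{\n}_x\tilde{V}_k=P^k\tilde{\n}_x^{2k+1}u_0+\tilde{\n}_xF_k$, and every monomial of $\tilde{\n}_xF_k$ has total spatial weight $2k+1$ and still at least two spatial factors. Since the weight of a monomial equals the sum of the orders $a_i$ of its spatial factors, and $2k+1$ is odd, each monomial must contain at least one factor $\tilde{\n}_x^{a_i}u_0$ of odd order; because $r\ge2$, that factor has order $a_i\le 2k$, hence $a_i\in\{1,3,\dots,2k-1\}$, i.e. $a_i=2j+1$ with $0\le j\le k-1$. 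Under the inductive hypothesis (the order-$(k-1)$ right-hand side, namely $\tilde{\n}_x^{2j+1}u_0|_{\p I}=0$ for $0\le j\le k-1$) every such factor vanishes on $\p I$, so $\tilde{\n}_xF_k|_{\p I}=0$ and therefore $\tilde{\n}_x\tilde{V}_k|_{\p I}=P^k\tilde{\n}_x^{2k+1}u_0|_{\p I}$. Invertibility of $P^k$ then yields $\tilde{\n}_x\tilde{V}_k|_{\p I}=0$ if and only if $\tilde{\n}_x^{2k+1}u_0|_{\p I}=0$, which, combined with the inductive equivalence for orders $\le k-1$, closes the induction and proves both directions simultaneously.

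The step I expect to be the main obstacle is the structural lemma, specifically verifying that the class of curvature-contracted monomials with $r\ge2$ spatial factors and total weight $2k$ is genuinely closed under the recursion $\tilde{V}_{k+1}=\tilde{\n}_t\tilde{V}_k$, with the weight grading behaving additively. The commutations and Leibniz expansions themselves are routine once the two identities above are fixed, but one must check carefully that no monomial degenerates to a single spatial factor, so that the odd-order factor extracted by the parity count always has order $\le 2k-1$ and is therefore controlled by the lower-order hypothesis; the analogous expansion of $\tilde{V}_k$ on p.~1451 of \cite{DW} can be adapted for this purpose.
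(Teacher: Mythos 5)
Your argument is correct, and it reaches the same three pillars as the paper's proof — the parallelism $\tilde{\n}J=0$ (so that $P=\ep+J$ commutes with $\tilde{\n}_x$ and $\tilde{\n}_t$), the invertibility of $\ep+J$ from $(\ep-J)(\ep+J)=\ep^2+1$, and a parity count on the total number of spatial covariant derivatives — but it organizes them quite differently. The paper never writes a closed-form expansion of $\tilde{V}_k$ in terms of $u_0$; instead it keeps the one-step recursion $\tilde{V}_l=\ep\tilde{\n}_x^2\tilde{V}_{l-1}+J\tilde{\n}_x^2\tilde{V}_{l-1}+Q_{l-1}$ and proves a separate double-induction statement (Lemma \ref{Key-estimate}) asserting $\tilde{\n}_x^{2q+1}\tilde{V}_{l-s}|_{\p I}=0$ and $\tilde{\n}_x^{2q-1}Q_{l-s}|_{\p I}=0$ for the intermediate quantities; the parity argument is applied there to the multi-index sum $j_0+\cdots+j_{s'}=2s+1$. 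The two directions of the equivalence are then handled separately (Steps 2 and 3 of the paper's proof), each by telescoping $\tilde{\n}_x\tilde{V}_{k+1}\to\tilde{\n}_x^3\tilde{V}_k\to\cdots\to\tilde{\n}_x^{2k+1}\tilde{V}_1=(\ep+J)\tilde{\n}_xW_{k+1}$. Your single structural lemma $\tilde{V}_k=P^k\tilde{\n}_x^{2k}u_0+F_k$, with the weight grading and the constraint $r\ge 2$, collapses all of this into one pass and yields both implications simultaneously from the invertibility of $P^k$; the price is that you must verify closure of the monomial class under $\tilde{\n}_t$, which you correctly identify as the delicate point. That verification does go through: commuting $\tilde{\n}_t$ past $\tilde{\n}_x^{a-1}$ applied to $\tilde{\n}_xu$ only produces curvature corrections $\tilde{\n}_x^i\bigl(R(P\tilde{\n}_x^2u,\tilde{\n}_xu)\tilde{\n}_x^{a-1-i}u\bigr)$ with $a-1-i\ge 1$, so every spatial slot keeps order at least $1$, the factor count never drops below $2$ (indeed below $3$) in any correction term, and the weight is raised by exactly $2$; hence the extracted odd-order factor always has order at most $2k-1$ and is killed by the order-$(k-1)$ hypothesis. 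Your route is arguably cleaner to check, while the paper's is self-contained at each inductive level and recycles its Lemma \ref{Key-estimate} and Remark \ref{Key-estimate1} elsewhere in Section \ref{s: com-cond}.
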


To prove this proposition, we need to show the following basic formula. For any $1\leq l\leq k+1$, since $\tilde{\n} J=0$, a simple calculation gives
\begin{equation}\label{F1}
\begin{aligned}
\tilde{\n}^{l}_t u=\ep\tilde{\n}_x\tilde{\n}_x\tilde{\n}^{l-1}_tu+J\tilde{\n}_x\tilde{\n}_x\tilde{\n}^{l-1}_tu+Q(\tilde{\n}_t u).
\end{aligned}
\end{equation}
Here $Q=0$ for $l=1$, and for $l\geq 2$, we have
\begin{align*}
Q(\tilde{\n}_t u)=&\sum_{\si}Q_{\si(l-1)}(u)(\tilde{\n}^{a_1}_t u, \cdots, \tilde{\n}^{a_{s-2}}_t u, \tilde{\n}_x\tilde{\n}^{a_{s-1}}_t u,\tilde{\n}_x\tilde{\n}^{a_{s}}_t u)\\
&+\sum_{\si}Q_{\si(l)}(u)(\tilde{\n}^{b_1}_t u, \cdots, \tilde{\n}^{b_{r}}_t u)
\end{align*}
where $1\leq a_i\leq l-1$ for $1\leq i\leq s-2$, $a_1+\cdots+a_s=l-1$,
\[(a_1, \cdots, a_s)=\si(l-1)\]
is a partition of $l-1$; $1\leq b_j\leq l-2$ for $1\leq j\leq r$ with $r\geq 3$, and $b_1+\cdots+b_r=l$,\[(b_1, \cdots, b_r)=\si(l)\]
is a partition of $l$; and $Q$ is a multi-linear functional on $u^*(TN)$. Here we have used the fact
\[\tilde{\n}_x\tilde{\n}_x u=\frac{1}{1+\ep^2}(\ep\tilde{\n}_t u-J\tilde{\n}_t u).\]

So, taking $t=0$ yields
\begin{equation}\label{Key-eq}
\begin{aligned}
\tilde{V}_l=\ep \tilde{\n}_x\tilde{\n}_x\tilde{V}_{l-1}+J\tilde{\n}_x\tilde{\n}_x\tilde{V}_{l-1}+Q_{l-1}
\end{aligned}
\end{equation}
where $Q_0=0$, and for $l\geq 2$,
\begin{align*}
Q_{l-1}=&\sum_{\si}Q_{\si(l-1)}(u_0)(\tilde{V}_{a_1}, \cdots,\tilde{V}_{a_{s-2}}, \tilde{\n}_x\tilde{V}_{a_{s-1}},\tilde{\n}_x\tilde{V}_{a_s})\\
&+\sum_{\si}Q_{\si(l)}(u_0)(\tilde{V}_{b_1}, \cdots, \tilde{V}_{b_r})
\end{align*}

Taking derivatives with respect to $x$ on both sides of the above equation \eqref{Key-eq} and assuming $\tilde{\n}_x\tilde{V}_{q}|_{\p I}=0$ for $q\leq l-1$, we get
\[\tilde{\n}_x\tilde{V}_l|_{\p I}=\ep\tilde{\n}_x\tilde{\n}_x\tilde{\n}_x\tilde{V}_{l-1}|_{\p I}+J\tilde{\n}_x\tilde{\n}_x\tilde{\n}_x\tilde{V}_{l-1}|_{\p I}+\tilde{\n}_x Q_{l-1}|_{\p I}\]
where
\begin{align*}
\tilde{\n}_x Q_{l-1}|_{\p I}=&\sum_{\si}Q_{\si(a)}(u_0)(\tilde{V}_{a_1}, \cdots,\tilde{V}_{a_{s-2}}, \tilde{\n}_x\tilde{V}_{a_{s-1}},\tilde{\n}_x\tilde{\n}_x\tilde{V}_{a_s})|_{\p I}\\
&+\sum_{\si}Q_{\si(l)}(u_0)(\tilde{V}_{b_1}, \cdots, \tilde{\n}_x\tilde{V}_{b_i}, \cdots, \tilde{V}_{b_r})|_{\p I}=0,
\end{align*}
since $0\leq a_i, b_i\leq l-1$. Thus, $\tilde{\n}_x\tilde{V}_l|_{\p  I}=0$ is equivalent to
\[\tilde{\n}^3_x\tilde{V}_{l-1}|_{\p I}=0.\]

Then by using the method of induction, we have the following result.
\begin{lem}\label{Key-estimate}
Suppose that
	\[\tilde{\n}^{2j+1}_x u_0|_{\p I}=0\]
for any $0\leq j\leq k$, and
	\[\tilde{\n}_x\tilde{V}_j|_{\p I}=0\]
for any $0\leq j\leq k+1$. Then for $1\leq s\leq l-1$ with $1\leq l\leq k+1$, there holds
\begin{itemize}
\item[$(1)$] for any $0\leq q\leq s$, $\tilde{\n}^{2q+1}_x\tilde{V}_{l-s}|_{\p I}=0$;
\item[$(2)$] for any $1\leq q\leq s$, $\tilde{\n}^{2q-1}_xQ_{l-s}|_{\p I}=0$.
\end{itemize}
\end{lem}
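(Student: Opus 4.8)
The plan is to prove Lemma \ref{Key-estimate} by a double induction, with an outer induction on the step parameter $s$ (equivalently, on the order $l-s$ of the quantity $\tilde{V}_{l-s}$ being differentiated) and, inside each step, an inner verification that the two claims $(1)$ and $(2)$ propagate together. The core engine is the recursion \eqref{Key-eq} together with the key reduction just derived before the lemma: under the hypothesis $\tilde{\n}_x\tilde{V}_q|_{\p I}=0$ for $q\le l-1$, one has that $\tilde{\n}_x\tilde{V}_l|_{\p I}=0$ is equivalent to $\tilde{\n}^3_x\tilde{V}_{l-1}|_{\p I}=0$. I would first extract from this computation the general \emph{shift rule}: differentiating \eqref{Key-eq} an even number $2q$ of times in $x$ and restricting to $\p I$ turns a condition on $\tilde{\n}^{2q+1}_x\tilde{V}_l$ into a condition on $\tilde{\n}^{2q+3}_x\tilde{V}_{l-1}$, provided the lower-order boundary vanishing needed to kill $\tilde{\n}^{\le 2q}_x Q_{l-1}|_{\p I}$ is already in place. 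Claim $(2)$ of the lemma is precisely the statement that these nonlinear remainder terms do vanish at the boundary to the required order, so claims $(1)$ and $(2)$ must be carried simultaneously.

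\smallskip
Concretely, I would organize the outer induction on $s$. For the base case $s=1$ (so $\tilde{V}_{l-1}$ with $l\le k+1$): claim $(1)$ asks $\tilde{\n}^{2q+1}_x\tilde{V}_{l-1}|_{\p I}=0$ for $q\in\{0,1\}$. The case $q=0$ is the hypothesis $\tilde{\n}_x\tilde{V}_{l-1}|_{\p I}=0$; the case $q=1$, i.e. $\tilde{\n}^3_x\tilde{V}_{l-1}|_{\p I}=0$, is exactly the equivalence displayed above applied with index $l$, using that $\tilde{\n}_x\tilde{V}_l|_{\p I}=0$ holds by hypothesis for all $l\le k+1$. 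Claim $(2)$ for $s=1$ requires $\tilde{\n}_x Q_{l-1}|_{\p I}=0$, which was already verified in the display preceding the lemma (each summand contains a factor $\tilde{V}_{a_i}$ or $\tilde{\n}_x\tilde{V}_{a_i}$ with $a_i\le l-1$, and these vanish on $\p I$). For the inductive step, I assume $(1)$ and $(2)$ hold for all step values $<s$ and all admissible $l$, and prove them for $s$. The mechanism is: apply $\tilde{\n}^{2q}_x$ to \eqref{Key-eq} with the index shifted so the left side reads $\tilde{\n}^{2q+1}_x\tilde{V}_{l-s}$; the leading terms become $\ep\,\tilde{\n}^{2q+3}_x\tilde{V}_{l-s-1}$ and $J\,\tilde{\n}^{2q+3}_x\tilde{V}_{l-s-1}$, which vanish by the inductive hypothesis $(1)$ at step $s+1$ (equivalently, applied to the lower-order field $\tilde{V}_{l-s-1}$), while the remainder contributes $\tilde{\n}^{2q+1}_x Q_{l-s}|_{\p I}$, handled by claim $(2)$ at the current step.

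\smallskip
The verification of claim $(2)$ — the boundary vanishing of odd-order $x$-derivatives of the nonlinear remainders $Q_{l-s}$ — is where I expect the real work to lie, and it is the main obstacle. The difficulty is combinatorial rather than conceptual: $\tilde{\n}^{2q-1}_x Q_{l-s}$ expands, by the Leibniz rule through the multilinear functionals $Q_{\si(\cdot)}$, into a sum of terms each of which is a multilinear expression in factors of the form $\tilde{\n}^{p}_x\tilde{V}_{c}$ with $c\le l-s-1<l$ and various $p$. I would argue that in \emph{every} resulting monomial at least one factor carries an \emph{odd} number of $x$-derivatives — this is the parity bookkeeping that makes the induction close — so that factor vanishes on $\p I$ by the already-established claim $(1)$ for smaller step values, forcing the whole term to vanish. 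The crucial structural input is that $Q_{l-s}$ is built (via \eqref{F1}) from terms in which the total derivative count and the number of factors have fixed parity relations; tracking that the odd-derivative factor is always present, after applying the odd number $2q-1$ of extra $\tilde{\n}_x$'s and using $\tilde{\n}_x\tilde{\n}_x u=\tfrac{1}{1+\ep^2}(\ep\tilde{\n}_t u-J\tilde{\n}_t u)$ to trade pairs of spatial derivatives, is the technical heart. I would also note that the $\ep$-dependence is harmless: it appears only as scalar coefficients and through the factor $\tfrac{1}{1+\ep^2}$, and $J$ commutes with $\tilde{\n}_x$ since $\tilde{\n}J=0$, so the parity argument is uniform in $\ep\in[0,1]$. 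Once claim $(2)$ is secured at each step, claim $(1)$ follows immediately from the shift rule, completing the double induction.
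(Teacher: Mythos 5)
Your overall strategy coincides with the paper's: the same recursion \eqref{Key-eq}, the same ``shift rule'' trading $\tilde{\n}^{2q+1}_x\tilde{V}_{m}$ for $\tilde{\n}^{2q+3}_x\tilde{V}_{m-1}$ modulo a remainder, and the same Leibniz/parity argument showing that every monomial in $\tilde{\n}^{\text{odd}}_xQ$ contains a factor with an odd number of $x$-derivatives that vanishes on $\p I$. The base case $s=1$ is handled correctly. But the inductive step as written runs the implication in the wrong direction: you claim that $\tilde{\n}^{2q+1}_x\tilde{V}_{l-s}|_{\p I}=0$ because the leading terms $\ep\tilde{\n}^{2q+3}_x\tilde{V}_{l-s-1}$ and $J\tilde{\n}^{2q+3}_x\tilde{V}_{l-s-1}$ ``vanish by the inductive hypothesis $(1)$ at step $s+1$.'' In an induction on increasing $s$ (assuming only steps $<s$), the statement at step $s+1$ is not available, so this is circular; and it cannot be rescued by reindexing, since $\tilde{\n}^{2s+3}_x\tilde{V}_{l-s-1}$ with $q=s$ is genuinely the top case of step $s+1$ and is covered by no earlier step at any level $l'$. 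The correct mechanism --- the one the paper uses --- is the reverse: the identity
\begin{equation*}
\tilde{\n}^{2s+1}_x\tilde{V}_{l-s}\big|_{\p I}=\ep\,\tilde{\n}^{2s+3}_x\tilde{V}_{l-s-1}\big|_{\p I}+J\,\tilde{\n}^{2s+3}_x\tilde{V}_{l-s-1}\big|_{\p I}+\tilde{\n}^{2s+1}_xQ_{l-s-1}\big|_{\p I}
\end{equation*}
is solved \emph{for} the unknown $\tilde{\n}^{2s+3}_x\tilde{V}_{l-s-1}|_{\p I}$: the left-hand side vanishes by the already-established step $s$, the $Q$-term vanishes by the parity argument, and then one concludes $\tilde{\n}^{2s+3}_x\tilde{V}_{l-s-1}|_{\p I}=0$ from $\ep v+Jv=0\Rightarrow v=0$ (the two summands are orthogonal, and $J$ is invertible, so this holds for all $\ep\in[0,1]$). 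This last elimination step is needed and never appears in your write-up, precisely because you placed the unknown on the wrong side. The lower cases $q\le s$ of step $s+1$ then follow from step $s$ applied at level $l-1$, which is why the paper nests an induction on $l$ around the induction on $s$.

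Two smaller points. Your remainder should be $\tilde{\n}^{2q+1}_xQ_{l-s-1}$, not $\tilde{\n}^{2q+1}_xQ_{l-s}$, since the recursion expresses $\tilde{V}_{l-s}$ through $Q_{l-s-1}$; note that $\tilde{\n}^{2s+1}_xQ_{l-s-1}=\tilde{\n}^{2(s+1)-1}_xQ_{l-(s+1)}$ is exactly claim $(2)$ at step $s+1$, so the two claims still propagate together as you intend. And in the parity argument the odd-derivative factor may land on the coefficient $\tilde{\n}^{j_0}_xQ_{\si}(u_0)$ rather than on some $\tilde{\n}^{p}_x\tilde{V}_c$; that case is where the hypothesis $\tilde{\n}^{2j+1}_xu_0|_{\p I}=0$ enters, and it should be stated explicitly.
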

\begin{proof}
We show this result by inducting on $l$. In the above, we have shown this results hold true in the case that $l=2$ (such that $s=1$). Next, we assume that the results hold for any $l\leq l_0$  with  $2\leq l_0\leq k+1$, then we intend to prove the results in the case of $l=l_0+1\leq k+1$.
	
To show the results in the case that $l=l_0+1\leq k+1$, we apply again the method of induction on $s$. For $s=1$, the desired result has been established in above, then we assume that for $s \leq l-1=l_0$, the results are true. In particular, we have
\[\tilde{\n}^{2s+1}_{x}\tilde{V}_{l-s}|_{\p I}=0.\]
	
Next, it remains to show the case that $s+1\leq l-1=l_0$. Noting that we can use the assumption of induction with $l_0=l-1$, since $1\leq s\leq l_0-1$. Then for any $1\leq q\leq s$ there holds
\[\tilde{\n}^{2q+1}_x\tilde{V}_{l_0-s}|_{\p I}=0,\quad \tilde{\n}^{2q-1}_xQ_{l_0-s}|_{\p I}=0.\]
Thus, it remains to show
\[\tilde{\n}^{2(s+1)+1}_x\tilde{V}_{l-s-1}|_{\p I}=0,\quad \tilde{\n}^{2s+1}_xQ_{l-s-1}|_{\p I}=0.\]
	
To this end, a simple computation gives
\begin{align*}
\tilde{\n}^{2s+1}_x\tilde{V}_{l-s}|_{\p I}=&\ep\tilde{\n}^{2(s+1)+1}_x\tilde{V}_{l-s-1}|_{\p\Om}+J\tilde{\n}^{2(s+1)+1}_x\tilde{V}_{l-s-1}|_{\p I}\\
&+\tilde{\n}^{2s+1}_x Q_{l_0-s}|_{\p I}.
\end{align*}	
Here,
\begin{align*}
\tilde{\n}^{2s+1}_x Q_{l_0-s}|_{\p I}=&\tilde{\n}^{2s+1}_x\{\sum_{\si}Q_{\si(l_0-s)}(u_0)(\tilde{V}_{a_1}, \cdots,\tilde{V}_{a_{s'-2}}, \tilde{\n}_x\tilde{V}_{a_{s'-1}},\tilde{\n}_x\tilde{V}_{a_{s'}})\}\\
&+\tilde{\n}^{2s+1}_x\{\sum_{\si}Q_{\si(l_0-s+1)}(u_0)(\tilde{V}_{b_1}, \cdots, \tilde{V}_{b_{r'}})\}
\end{align*}
where $1\leq a_i\leq l_0-s$ for $1\leq i\leq s'-2$, and $a_1+\cdots+a_{s'}=l_0-s$; $1\leq b_i\leq l_0-s-1$ for $1\leq i\leq r'$, and $b_1+\cdots+b_{r'}=l_0-s+1$.
	
\medskip
Now, we claim that $\tilde{\n}^{2s+1}_x Q_{l_0-s}|_{\p I}=0$. A direct calculation shows
\begin{align*}
&\tilde{\n}^{2s+1}_x Q_{l_0-s}|_{\p I}\\
=&\sum_{j_0+\cdots+j_{s'}=2s+1}\sum_{\si}\tilde{\n}^{j_0}_xQ_{\si(l_0-s)}(u_0)(\tilde{\n}^{j_1}_x\tilde{V}_{a_1}, \cdots,\tilde{\n}^{j_{s'-2}}_x\tilde{V}_{a_{s'-2}}, \tilde{\n}^{j'_{s'-1}}_x\tilde{V}_{a_{s'-1}},\tilde{\n}^{j'_{s'}}_x\tilde{V}_{a_s'})|_{\p I}\\
&+\sum_{i_0+\cdots+i_{r'}=2s+1}\sum_{\si}\tilde{\n}^{i_0}_xQ_{\si(l_0-s+1)}(u_0)(\tilde{\n}^{i_1}_x\tilde{V}_{b_1}, \cdots\tilde{\n}^{i_{r'}}_x\tilde{V}_{b_{r'}})|_{\p I}.
\end{align*}
	
For simplicity, we denote $j'_{s'-1}=j_{s'-1}+1$ and $j'_{s'}=j_{s'}+1$. Since \[j_0+\cdots+j_{s'-2}+j_{s'-1}+j_{s'}=2s+1\]
is odd, then there exits at least one odd $j_q$ with $q\leq 2s+1$ in $ \{j_0,\cdots, j_{s'-2},j'_{s'-1},j'_{s'}\}$. By the assumption of induction on $l$ with $l\leq l_0$, we have
\[\tilde{\n}^{j_q}\tilde{V}_{j_q}|_{\p I}=0.\]
This implies
\[\sum_{j_0+\cdots+j_{s'}=2s+1}\sum_{\si}\tilde{\n}^{j_0}_xQ_{\si(l_0-s)}(u_0)(\tilde{\n}^{j_1}_x\tilde{V}_{a_1}, \cdots,\tilde{\n}^{j_{s'-2}}_x\tilde{V}_{s'-2}, \tilde{\n}^{j'_{s'-1}}_x\tilde{V}_{a_{s'-1}},\tilde{\n}^{j'_{s'}}_x\tilde{V}_{a_s'})|_{\p I}=0,\]
since it is not difficult to show $\tilde{\n}^{j_0}_xQ_{\si(l_0-s)}(u_0)|_{\p I}=0$ if $j_0$ is odd.
	
By similar arguments with that in the above, we can also show
\[\sum_{i_0+\cdots+i_{r'}=2s+1}\sum_{\si}\tilde{\n}^{i_0}_xQ_{\si(l_0-s+1)}(u_0)(\tilde{\n}^{i_1}_x\tilde{V}_{b_1}, \cdots,\tilde{\n}^{i_{r'}}_x\tilde{V}_{b_{r'}})|_{\p I}=0.\]
	
So, there holds
\[\tilde{\n}_x^{2s+1}\tilde{V}_{l-s}|_{\p I}=\ep\tilde{\n}_x^{2(s+1)+1}\tilde{V}_{l-s-1}|_{\p I}+J\tilde{\n}_x^{2(s+1)+1}\tilde{V}_{l-s-1}|_{\p I}=0\]
and
\[\tilde{\n}_x^{2s+1}Q_{l-s-1}|_{\p I}=0.\]
Immediately it follows that
\[\tilde{\n}_x^{2(s+1)+1}\tilde{V}_{l-s-1}|_{\p  I}=0.\]
Therefore, the proof is completed.
\end{proof}

\begin{rem}\label{Key-estimate1}
In fact, we only need to assume that
\[\tilde{\n}_x\tilde{V}_j|_{\p I}=0,\quad \tilde{\n}^{2j+1}_xu_0|_{\p I}=0\]
with $0\leq j\leq k$,  then, by taking the same argument as in the proof of Lemma \ref{Key-estimate}, we can show that there holds true for any $0\leq q\leq s\leq k-1$
	\[\tilde{\n}^{2q+1}_xQ_{k-s}|_{\p I}=0.\]
\end{rem}

\medskip
Now we are in the position to show Proposition \ref{intrc-cd1}.

\begin{proof}
	
The proof of this proposition is divided into three steps.
	
\medskip
\noindent\emph{\textbf{Step 1: In the case that $k=0,1$.}}\
	
When $k=0$, we have $\tilde{V}_0=W_0=u_0$ and obviously the result is true since $\frac{\p u_0}{\p x}|_{\p\Om}=0$. In the case that $k=1$, we have
\[\tilde{V}_1=\ep\tilde{\n}_x\tilde{\n}_x u_0+J\tilde{\n}_x\tilde{\n}_x u_0=\ep W_1+JW_1,\]
it follows that
\[\tilde{\n}_x\tilde{V}_1|_{\p I}=0\]
if and only if
\[\tilde{\n}_x W_1|_{\p I}=0.\]
	
Next we show the general case by induction on $k$. Assume that this proposition has been established for the case of order less and equal than $k$. Now we need to show the result also holds in the case of $k+1$.
	
\medskip
\noindent\emph{\textbf{Step 2: The general case (i.e. $k\geq 1$)(From \eqref{com-cond1} to \eqref{com-cond2}).}}\
We assume that $\tilde{\n}_x \tilde{V}_{l}|_{\p I}=0$ for any $l\leq k+1$. Then, by assumption of induction we have
\[\tilde{\n}_x W_{l}|_{\p I}=\tilde{\n}^{2l+1}_xu_0|_{\p I}=0\]
for any $l\leq k$. Thus, it remains to show $\tilde{\n}_x W_{k+1}|_{\p I}=0$.
	
According to Lemma \ref{Key-estimate}, by taking $l=k+1$ and $s=q=k$, we get
\[0=\tilde{\n}_x^{2k+1}\tilde{V}_1|_{\p I}=\ep \tilde{\n}_x W^{k+1}|_{\p I}+J\tilde{\n}_x W^{k+1}|_{\p I},\]
which gives
\[\tilde{\n}_x W^{k+1}|_{\p I}=0.\]

\medskip
\noindent\emph{\textbf{Step 3: The general case (i.e. $k\geq 1$)(From \eqref{com-cond2} to \eqref{com-cond1}).}}\
We assume that $\tilde{\n}_xW_{l}|_{\p I}=0$ for any $l\leq k+1$. Then, by the assumption of induction we have
	\[\tilde{\n}_x\tilde{V}_l|_{\p I}=0\]
for any $l\leq k$. Next, we prove $\tilde{\n}_x\tilde{V}_{k+1}|_{\p I}=0$.
	
A simple calculation gives
\[\tilde{V}_{k+1}=\ep\tilde{\n}^2_x\tilde{V}_k+J\tilde{\n}^2_x\tilde{V}_k+Q_{k}.\]
Since $\tilde{\n}_xQ_k|_{\p I}=0$, this implies
\[\tilde{\n}_x\tilde{V}_{k+1}|_{\p I}=\ep\tilde{\n}^3_x\tilde{V}_k|_{\p I}+J\tilde{\n}^3_x\tilde{V}_k|_{\p I}.\]
So, to show $\tilde{\n}_x\tilde{V}_{k+1}|_{\p I}=0$, we only need to show $\tilde{\n}^3_x\tilde{V}_k|_{\p I}=0$.
	
\medskip
On the other hand, by the estimates in Remark \ref{Key-estimate1} we have
\[\tilde{\n}^{2(k-s)+1}_xQ_{s}|_{\p I}=0\]
for $1\leq s\leq k$.
Therefore, a direct calculation shows
\begin{align*}
\n^3_x\tilde{V}_k|_{\p I}=&\ep\tilde{\n}^5_x\tilde{V}_{k-1}|_{\p I}+J\tilde{\n}^5_x\tilde{V}_{k-1}|_{\p I}+\tilde{\n}^{3}_xQ_{k-1}|_{\p I}\\
=&\ep\tilde{\n}^5_x\tilde{V}_{k-1}|_{\p I}+J\tilde{\n}^5_x\tilde{V}_{k-1}|_{\p I}.
\end{align*}
Thus, we only need to show $\tilde{\n}^5_x\tilde{V}_{k-1}|_{\p I}=0$. By repeating the above process with $k$ steps, we can see that in order to show $\tilde{\n}_x\tilde{V}_{k+1}|_{\p I}=0$ one only need to derive $\tilde{\n}^{2k+1}_x\tilde{V}_{1}|_{\p I}=0$.
	
Since
\[\tilde{\n}^{2k+1}_x\tilde{V}_{1}|_{\p I}=\ep\tilde{\n}_x W^{k+1}|_{\p I}+J\tilde{\n}_x W^{k+1}|_{\p I},\]
the fact $\tilde{\n}_x W^{k+1}|_{\p I}=0$ implies
\[\tilde{\n}_x\tilde{V}_{k+1}|_{\p I}=0.\]
Therefore, we finish the proof.
\end{proof}

\begin{rem}
The compatibility condition \eqref{com-cond3} is independent of $\ep$.
\end{rem}

However, when the dimension of $\Om$ is larger than 1, the $k$-order (with $k>1$) compatibility conditions defined in \eqref{com-cond}(or \eqref{com-cond1}) seem to be dependent of $\ep$. To proceed, we need to add some stronger conditions on $u_0$ to guarantee that equation \eqref{S-eq} and its parabolic perturbed equation \eqref{pra-eq0} share the same $k$-order compatibility conditions (a similar compatibility condition for parabolic perturbed equation appears in \cite{FT}).
\begin{defn}\label{stronger-com-cond}
Let $1<k\in \mathbb{N}$, $u_0\in H^{2k+2}(\Om,N)$. We say $u_0$ satisfies a stronger compatibility condition of order $k$ (denoted by $CC(k)$), if we have that for any $1\leq j\leq 2k$ there hold true
\begin{equation}\label{s-com-cond}
\frac{\p}{\p \nu}\p^ju_0|_{\p\Om}=0,
\end{equation}
where $\p^ju_0=\(\frac{\p^ju_0}{\p x^{i_1}\cdots\p x^{i_j}}\)$ are all the $j$-th partial derivatives of $u_0$.
\end{defn}

In fact, there always exists an initial data $u_0$ satisfying the compatibility condition defined in \eqref{s-com-cond}. For instance, we can choose a smooth map $u_0:\Omega\to N$ such that $u_0$ is constant in a neighborhood of $\p\Om$.

Meanwhile, it should be pointed out that these stronger compatibility conditions defined in \eqref{s-com-cond} imply \eqref{com-cond} and \eqref{com-cond1}. Concretely speaking, for any $u\in N$, let $P(u): \Real^K\to T_uN$ be the standard projection operator. Then $\tau(u)=P(u)\De u$, and hence Equation \eqref{eq-ASMF} has the following extrinsic form
\[\p_tu=\ep(\De u+A(u)(\n u,\n u))+\tilde{J}(u)\De u,\]
where for simplicity we denote $J(u)P(u)$ by $\tilde{J}(u)$. Thus, for any $k\in \mathbb{N}$, by applying this extrinsic equation we take a simple calculation to see
\begin{equation}\label{eq-V_k}
\begin{aligned}
V_{k+1}=&\ep\De V_k + \tilde{J}(u_0)\De V_k+2\ep A(u_0)(\n V_k, \n u_0)\\
	&+\ep \n A(u_0)(V_k, \n u_0, \n u_0)+\n \tilde{J}(u_0)(V_k, \De u_0)\\
	&+\ep\sum_{i_1+\cdots+i_s+m+l=k,\,\,1\leq i_j<k}\n^sA(u_0)(V_{i_1},\cdots, V_{i_s}, \n V_m, \n V_l)\\
	&+\sum_{i_1+\cdots+i_s+m=k,\,\,1\leq i_j<k}\n^s \tilde{J}(u_0)(V_{i_1},\cdots, V_{i_s}, \De V_m).	
\end{aligned}
\end{equation}

In particular, we have
\[V_1=\ep(\De u_0+A(u_0)(\n u_0, \n u_0))+\tilde{J}(u_0)\De u_0.\]
So, obviously the $1$-order compatibility condition of $u_0$ defined in \eqref{s-com-cond} (namely
$\frac{\p u_0}{\p\nu}|_{\p\Om}=0$, $\frac{\p}{\p\nu}\p u_0|_{\p\Om}=0$ and $\frac{\p }{\p\nu}\p^2 u_0|_{\p\Om}=0$) implies
\[\frac{\p}{\p \nu}V_1|_{\p\Om}=0,\]
that is the 1-order compatibility condition of $u_0$ defined \eqref{com-cond}.

In the case of $k\geq 2$, by applying Formula \eqref{eq-V_k}, we have
\[V_k=\sum_{\si}\tilde{B}_{\si(2k)}(u_0)(\p^{j_1} u_0, \cdots, \p^{j_s}u_0)\]
where the sum is over all indices $j_1,\cdots,j_s$ such that $1\leq j_i\leq 2k$ and $j_1+\cdots+j_s=2k$,
\[(j_1, \cdots, j_s)=\si(2k)\]
is a partition of $2k$, and each $\tilde{B}_{\si(2k)}$ is a multi-linear vector valued function on $\Real^K$. Therefore, it is not difficult to show that the $k$-order compatibility condition defined in \eqref{s-com-cond} implies \eqref{com-cond} and \eqref{com-cond1}.

\subsection{Another compatibility conditions}\label{another-cc}
We also need to make use of the following conclusions on compatibility conditions to vanish the boundary term in the process of energy estimates in the coming sections.
\begin{prop}\label{comp-cond2}
Let $\Om$ be a smooth bounded domain in $\Real^m$ with $m\geq 1$, $u: \Om\times [0,T]\to \Real$ be a map satisfying
\[\p_t^{i}u\in L^2([0,T], H^{2k-2i}(\Om))\]
for any $0\leq i< k$, where $k\geq 1$. If $$\frac{\p u}{\p \nu}|_{\p\Om\times[0,T]}=0$$ in the sense of trace,
then, for $0\leq j<k$ there hold true
\[\frac{\p}{\p \nu}\p^j_tu|_{\p\Om\times[0,T]}=0.\]
\end{prop}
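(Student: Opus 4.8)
The plan is to reduce the claim to the commutativity of the normal-derivative trace operator with differentiation in time. Recall that since $\p\Om$ is smooth, for every real $s>3/2$ the normal-derivative trace
\[\gamma_\nu: H^s(\Om)\to H^{s-3/2}(\p\Om),\qquad \gamma_\nu v=\frac{\p v}{\p\nu}\Big|_{\p\Om},\]
is a bounded linear operator. For a fixed $j$ with $0\le j<k$ the hypothesis gives $\p_t^j u\in L^2([0,T],H^{2k-2j}(\Om))$, and since $2k-2j\ge 2>3/2$ the quantity $\gamma_\nu(\p_t^j u)$ is a well-defined element of $L^2([0,T],H^{2k-2j-3/2}(\p\Om))$; this is precisely the object $\frac{\p}{\p\nu}\p_t^j u|_{\p\Om\times[0,T]}$ that we must show vanishes. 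The hypothesis $\frac{\p u}{\p\nu}|_{\p\Om\times[0,T]}=0$ reads $\gamma_\nu u=0$ in $L^2([0,T],H^{2k-3/2}(\p\Om))$, so it suffices to prove the interchange identity $\gamma_\nu(\p_t^j u)=\p_t^j(\gamma_\nu u)$ in the weak (Bochner) sense, whence the right-hand side equals $\p_t^j 0=0$.

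First I would record that, because each $\p_t^i u$ ($0\le i<k$) is the $i$-th weak time derivative of $u$ in the Bochner sense, for every scalar test function $\phi\in C_c^\infty((0,T))$ there holds
\[\int_0^T (\p_t^j u)\,\phi\,dt=(-1)^j\int_0^T u\,\phi^{(j)}\,dt,\]
an identity of Bochner integrals valued in $H^{2k-2j}(\Om)$ (the integrand on the right lies in $H^{2k}(\Om)\hookrightarrow H^{2k-2j}(\Om)$). Applying the bounded linear operator $\gamma_\nu:H^{2k-2j}(\Om)\to H^{2k-2j-3/2}(\p\Om)$ and using that bounded linear maps commute with Bochner integration, I would obtain
\[\int_0^T \gamma_\nu(\p_t^j u)\,\phi\,dt=(-1)^j\int_0^T \gamma_\nu(u)\,\phi^{(j)}\,dt.\]
Since $\gamma_\nu u=0$ by hypothesis, the right-hand side vanishes, so $\int_0^T \gamma_\nu(\p_t^j u)\,\phi\,dt=0$ for every $\phi\in C_c^\infty((0,T))$. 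By the fundamental lemma of the calculus of variations for Banach-space-valued $L^1$ functions, this forces $\gamma_\nu(\p_t^j u)=0$ for almost every $t\in[0,T]$, which is exactly $\frac{\p}{\p\nu}\p_t^j u|_{\p\Om\times[0,T]}=0$.

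The genuinely delicate point — and the only place where the full strength of the anisotropic hypothesis $\p_t^i u\in L^2([0,T],H^{2k-2i}(\Om))$ is used — is the bookkeeping that keeps the trace operator bounded at every stage: one must verify that the common target space $H^{2k-2j}(\Om)$ in which the test-function identity is asserted still satisfies $2k-2j>3/2$, so that $\gamma_\nu$ is defined and continuous there, and that the weak time derivatives $\p_t^i u$ genuinely exist and are compatible across the nested scale $H^{2k}(\Om)\hookrightarrow H^{2k-2}(\Om)\hookrightarrow\cdots\hookrightarrow H^{2k-2j}(\Om)$. For $m=1$ the same argument applies verbatim with $\gamma_\nu$ replaced by evaluation of the one-sided derivative at the endpoints of $I$, since $H^{2k-2j}(I)\hookrightarrow C^1(\bar I)$. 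I would also note that an equivalent, slightly more hands-on route is an induction on $j$: differentiating the boundary identity $\gamma_\nu(\p_t^{j-1}u)=0$ once in $t$ and invoking the same commutation lemma; the test-function formulation above simply packages all the inductive steps at once and sidesteps any pointwise-in-$t$ regularity issues.
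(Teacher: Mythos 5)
Your proof is correct, and it reaches the conclusion by a genuinely different mechanism than the paper. The paper never invokes the normal-derivative trace operator into boundary Sobolev spaces: it encodes the condition $\frac{\p u}{\p \nu}|_{\p\Om}=0$ variationally through the Green identity $\int_{\Om}\<\De u,\phi\>dx=-\int_{\Om}\<\n u,\n\phi\>dx$, tests against space-time functions $\phi\in C^\infty(\bar\Om\times[0,T])$ that are \emph{not} compactly supported in time, integrates by parts in $t$ on both sides, and cancels the resulting terms at $t=0$ and $t=T$ by again applying the Green identity at those time slices --- a step that requires the continuity-in-time embedding $u\in C^0([0,T],H^3(\Om))$ from Lemma \ref{C^0-em}. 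It then specializes to $\phi(x,t)=\eta(t)f(x)$ and iterates one time derivative at a time. You instead work abstractly: you use the boundedness of $\gamma_\nu:H^s(\Om)\to H^{s-3/2}(\p\Om)$ for $s>3/2$, the commutation of bounded linear operators with Bochner integrals, compactly supported temporal test functions (which eliminates the endpoint terms entirely and removes any need for the $C^0$-in-time lemma), and the du Bois--Reymond lemma for Banach-space-valued functions, handling all orders $j$ in one stroke via the iterated weak-derivative identity. Your route buys a shorter, cleaner argument that sidesteps pointwise-in-$t$ regularity, at the price of invoking fractional-order trace theory on $\p\Om$; the paper's route is more elementary (only integration by parts at the $H^1$ level) but needs the extra continuity lemma to make sense of the temporal boundary terms. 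Your bookkeeping of the exponents ($2k-2j\geq 2>3/2$ for $j<k$, and the nesting $H^{2k}\hookrightarrow\cdots\hookrightarrow H^{2k-2j}$) is exactly the point that needs checking, and you check it.
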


\begin{proof}
Without loss of generality, we assume $k\geq 2$. We only need to show
\[\frac{\p}{\p \nu}\p_tu|_{\p\Om\times[0,T]}=0\]
in the sense of trace, the remaining cases can be dealt with by some almost the same arguments.

Let $\phi\in C^\infty(\bar{\Om}\times [0,T])$. It is easy to see that there holds true
\begin{equation}\label{comp}
\int_{0}^{T}\int_{\Om}\<\De u, \p_t \phi\>dxdt=-\int_{0}^{T}\int_{\Om}\<\n u, \p_t\n \phi\>dxdt,
\end{equation}
since
$$\frac{\p u}{\p \nu}|_{\p\Om\times[0,T]}=0.$$
Then, a simple calculation shows
\begin{align*}
	\mbox{LHS of \eqref{comp}} =&-\int_{0}^{T}\int_{\Om}\<\p_t\De u, \phi\>dxdt+\int_{\Om}\<\De u, \phi\>dx(T)\\
	&-\int_{\Om}\<\De u, \phi\>dx(0)\\
	=&-\int_{0}^{T}\int_{\Om}\<\p_t\De u, \phi\>dxdt-\int_{\Om}\<\n u, \n\phi\>dx(T)\\
	&+\int_{\Om}\<\n u, \n\phi\>dx(0)
\end{align*}
and
\begin{align*}
	\mbox{RHS of \eqref{comp}} =&-\int_{0}^{T}\int_{\Om}\<\n u, \p_t\n \phi\>dxdt\\
	=&\int_{0}^{T}\int_{\Om}\<\n \p_tu, \n \phi\>dxdt-\int_{\Om}\<\n u, \n\phi\>dx(T)\\
	&+\int_{\Om}\<\n u, \n\phi\>dx(0).
\end{align*}
It follows
\[\int_{0}^{T}\int_{\Om}\<\De \p_tu, \phi\>dxdt=-\int_{0}^{T}\int_{\Om}\<\n \p_tu, \n \phi\>dxdt.\]
This is just what we want to prove. Here we have used Lemma \ref{C^0-em} which tells us that
\[u\in C^0([0,T],H^3(\Om)).\]
Hence, if we take $\phi(x,t)=\eta(t)f(x)$, then
\begin{align*}
\int_{0}^{T}\(\int_{\Om}\<\De u, f\>dx+\int_{\Om}\<\n u, \n f\>dx\)\eta(t)dt=0.
\end{align*}
This implies
\[\int_{\Om}\<\De u, f\>dx=-\int_{\Om}\<\n u, \n f\>dx\]
for any $t\in [0,T]$.
\end{proof}

\section{$H^5$-regular local solution}\label{s: H^5}\

Let $u_0\in H^5(\Om, \U^2)$, satisfying the $1$-order compatibility condition (see \eqref{com-cond2}). We consider the parabolic perturbed equation (i.e. Landau-Lifshitz-Gilbert equation)
\begin{equation}\label{p-sch}
	\begin{cases}
		\p_tu =\ep \tau(u)+u\times \De u\quad\quad&\text{(x,t)}\in\Om\times \Real^+,\\[1ex]
		\frac{\p u}{\p \nu}=0&\text{(x,t)}\in\p\Om\times \Real^+,\\[1ex]
		u(x,0)=u_0: \Om\to \U^2,
	\end{cases}
\end{equation}
with $\ep\in (0,1)$.

Recall that we have established the following theorem in \cite{CW,CW1}(also see \cite{CJ}).
\begin{thm}\label{para-H^5}
Suppose that $u_0\in H^5(\Om, \U^2)$, and satisfies the $1$-order compatibility condition
\[\tilde{\n}_\nu u_0|_{\p \Om}=0\quad\text{and}\quad\tilde{\n}_\nu \tau(u_0)|_{\p \Om}=0,\]
i.e. \eqref{com-cond2} in Proposition \ref{intrc-cd1}. Then there exists a positive time $T_\ep$ depending only on $\ep$ and $\norm{u_0}_{H^2(\Om)}$ such that  equation \eqref{p-sch} admits a unique regular solution $u_\ep$, which satisfies for any $T<T_\ep$ that
\[\p^i_tu_\ep\in L^\infty([0,T], H^{5-2i}(\Om))\cap L^2([0,T], H^{6-2i}(\Om)).\]
for $0 \leq i\leq 2$.

Moreover, there exists a uniform positive number $T_0<T_\ep$ depending only on $\norm{u_0}_{H^3}$, such that $u_\ep$ satisfies
\begin{equation}\label{unif-es}
\sup_{0\leq T<T_0}(\norm{u_\ep}^2_{H^3(\Om)}+\norm{\frac{\p u_\ep}{\p t}}_{H^1(\Om)})\leq C(\norm{u_0}_{H^3}).
\end{equation}
\end{thm}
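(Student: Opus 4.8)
The plan is to treat \eqref{p-sch}, for each fixed $\ep\in(0,1)$, as a quasilinear parabolic system whose principal part $\ep\De u$ is uniformly elliptic and whose Schr\"odinger term $u\times\De u$ is a lower-order, antisymmetric perturbation; once local existence, uniqueness and higher regularity are in hand, the real work is to extract from the geometric structure a third-order energy inequality whose constants do not depend on $\ep$.

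First I would construct solutions by a Galerkin scheme built on the eigenfunctions $\{\phi_j\}$ of the Neumann Laplacian on $\Om$. Since each $\phi_j$ satisfies $\p\phi_j/\p\nu|_{\p\Om}=0$, the finite-dimensional approximations automatically respect the boundary condition and generate no boundary contributions under integration by parts. Writing the equation in the Landau--Lifshitz form $\p_t u=-\ep\,u\times(u\times\De u)+u\times\De u$, both terms on the right are pointwise orthogonal to $u$, so $\p_t|u|^2=0$ and the constraint $|u_\ep|=1$ is preserved. The parabolic dissipation $\ep\norm{\n u}^2$ together with the basic energy identity (the Schr\"odinger term carries no definite sign but also no uncontrolled growth, by antisymmetry of $J=u\times$) furnishes $H^2$ a priori bounds depending only on $\ep$ and $\norm{u_0}_{H^2}$, hence a local existence time $T_\ep=T_\ep(\ep,\norm{u_0}_{H^2})$; passing to the limit via the Aubin--Lions--Simon Lemma \ref{A-S} yields a solution of \eqref{p-sch}. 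Uniqueness for fixed $\ep$ follows (as in \cite{CW1}) by testing the equation for the difference $v=u_\ep-\bar u_\ep$ of two solutions against $v$: the antisymmetry of $J$ makes $\<v\times\De u_\ep,v\>$ vanish and, after integration by parts, reduces $\<\bar u_\ep\times\De v,v\>$ to the controllable $\<\n\bar u_\ep\times\n v,v\>$, the Neumann condition kills the boundary term, and a Gronwall estimate on $\norm{v}^2_{L^2}$ (aided by the $\ep$-coercivity) forces $v\equiv0$.

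Next I would propagate regularity on $[0,T_\ep)$ by differentiating the equation in time. Setting $w_i=\p^i_t u_\ep$, the $1$-order compatibility condition \eqref{com-cond2} (equivalently \eqref{com-cond}, via Proposition \ref{intrc-cd1}) places the initial data $V_i=w_i|_{t=0}$ in the correct Sobolev classes, while Proposition \ref{comp-cond2} gives $\p w_i/\p\nu|_{\p\Om}=0$, so that both $w_i$ and $\De w_i$ are admissible test functions producing no boundary terms. Testing the equation for $w_i$ against $w_i$ and against $\De w_i$, using the identity $\De u_\ep=\frac{1}{1+\ep^2}(\ep\p_t u_\ep-u_\ep\times\p_t u_\ep)-|\n u_\ep|^2u_\ep$ to trade $\De$ for $\p_t$, and invoking the equivalent-norm Lemma \ref{eq-norm} to convert $\norm{\De w_i}_{H^k}$ back into $\norm{w_i}_{H^{k+2}}$, produces closed estimates for $i=0,1,2$; here the dissipative term $\ep\norm{\n w_i}^2$ (or $\ep\norm{\De w_i}^2$) supplies exactly the one-derivative $L^2$-in-time gain, giving $\p^i_t u_\ep\in L^\infty([0,T],H^{5-2i})\cap L^2([0,T],H^{6-2i})$.

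The heart of the matter, and the main obstacle, is the \emph{uniform} bound \eqref{unif-es}, for which the $\ep$-dissipation may be used only through its sign. I would control $E(t):=\norm{u_\ep}^2_{H^3}+\norm{w_1}^2_{H^1}$ by a differential inequality $E'\le f(E)$ with $f$ polynomial and \emph{independent of $\ep$}, using the equivalent norm $\norm{u_\ep}^2_{H^3}\le C(\norm{\De u_\ep}^2_{L^2}+\norm{w_1}^2_{H^1}+1)^3$ from \cite{CW1} (a consequence of Lemma \ref{eq-norm}) to reduce everything to an estimate of $\norm{w_1}_{H^1}$. In the energy identity for $\norm{\n w_1}^2$ the genuinely top-order Schr\"odinger term $\<u_\ep\times\De w_1,\De w_1\>$ vanishes pointwise by antisymmetry of $J$, and all boundary terms vanish because $\p w_1/\p\nu|_{\p\Om}=0$. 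The borderline term $\<w_1\times\De u_\ep,\De w_1\>$ is first lowered in order by integration by parts and then closed by substituting the third derivatives of $u_\ep$ through $w_1$ via $\De u_\ep=\frac{1}{1+\ep^2}(\ep w_1-u_\ep\times w_1)-|\n u_\ep|^2u_\ep$: the dangerous pieces this produces either again cancel by antisymmetry (so that the factor $\ep$ disappears from the top order) or are genuinely of lower order and absorbed by Sobolev interpolation, leaving all constants uniform in $\ep\in(0,1)$. Feeding this inequality into the blow-up Gronwall Lemma \ref{Gron-inq} (or the comparison Corollary \ref{ode}) produces a time $T_0$ and a bound $C(\norm{u_0}_{H^3})$ both independent of $\ep$, which is precisely \eqref{unif-es}. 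I expect the delicate bookkeeping of these borderline terms, ensuring that every $\ep$-carrying or top-order contribution is either cancelled by antisymmetry or re-expressed through $w_1$, to be the crux of the whole argument.
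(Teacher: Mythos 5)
Your proposal reconstructs from scratch what the paper settles purely by citation: its entire proof of this theorem consists of invoking Theorem 3.1 and Theorem 1.1 of \cite{CW1} (local existence, uniqueness, and the uniform $H^3$ bound \eqref{unif-es}) together with Theorem 1.3 of \cite{CW} (the $\ep$-dependent $H^5$ regularity under the first-order compatibility condition). Your outline --- Galerkin approximation on Neumann eigenfunctions, uniqueness via a Gronwall estimate exploiting the antisymmetry of $J=u\times$, propagation of regularity by differentiating in time with $w_i=\p^i_tu_\ep$ admissible as test functions through Proposition \ref{comp-cond2}, and a uniform-in-$\ep$ differential inequality for $\norm{u_\ep}^2_{H^3}+\norm{w_1}^2_{H^1}$ closed through the identity $\De u_\ep=\tfrac{1}{1+\ep^2}(\ep w_1-u_\ep\times w_1)-|\n u_\ep|^2u_\ep$ and Lemma \ref{eq-norm} --- is faithful to the method of those references and mirrors the analogous $H^5$-level argument the paper itself carries out in Section \ref{s: H^5}, so I regard it as the right proof where the paper offers only a pointer. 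Two points you gloss over are where the cited proofs spend real effort. First, the constraint $|u_\ep|=1$ is not automatic at the Galerkin level: projecting onto finitely many eigenfunctions destroys the pointwise orthogonality of the right-hand side to $u$, so the approximants do not take values in $\U^2$ and the identity $\tau(u)=-u\times(u\times\De u)$ is unavailable for them; one must prove $|u|^2\equiv1$ for the limit separately (e.g.\ via the parabolic equation satisfied by $|u|^2-1$ with vanishing initial and Neumann data). Second, before testing the equation for $w_1$ (or $w_2$) against its Laplacian you must first upgrade its regularity to $L^2_{loc}H^3$ by linear parabolic $L^2$-theory, exactly as the paper does for $w_2$ in Section \ref{s: H^5}; otherwise the integrations by parts underlying your top-order cancellations are not justified. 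Neither omission is fatal, but both must be filled for the sketch to become a proof.
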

\begin{proof}
We can apply Theorem 3.1 in \cite{CW1} to conclude that there exists a maximal existence time $T_\ep$ (depending only on $\ep$ and $\norm{u_0}_{H^2(\Om)}$) such that equation \eqref{p-sch} admits a unique regular solution $u_\ep$, which satisfies that for any $T<T_\ep$
\begin{itemize}
\item[$(1)$] $u_\ep\in L^\infty([0,T],H^3(\Om, \U^2))\cap L^{2}([0,T],H^4(\Om, \U^2))$;
\item[$(2)$] $\frac{\p u_\ep}{\p t}\in L^\infty([0,T],H^1(\Om))\cap L^{2}([0,T],H^2(\Om))$ and $\frac{\p^2 u_\ep}{\p t^2}\in L^{2}([0,T],L^2(\Om))$.
\end{itemize}
Moreover, by Theorem 1.1 in \cite{CW1}, there exists a positive number $T_0<T_\ep$ depending only on $\norm{u_0}_{H^3}$, such that $u_\ep$ satisfies
\[\sup_{0\leq T<T_0}(\norm{u_\ep}^2_{H^3(\Om)}+\norm{\frac{\p u_\ep}{\p t}}_{H^1(\Om)})\leq C(\norm{u_0}_{H^3}).\]

On the other hand, since $u_0\in H^5(\Om, \U^2)$ satisfying the $1$-order compatibility condition, by applying Theorem 1.3 in \cite{CW} to improve the regularity of $u_\ep$, we get the desired estimates of $u_\ep$ in this theorem.
\end{proof}

\medskip
Next, we follow a similar argument with that in \cite{CW1} to show the uniform $H^3$-estimates of
$$v=\p_tu_\ep.$$
Then, by using equation \eqref{p-sch} again, we can improve the uniform estimates of $u_\ep$ such that the sequence $\{u_\ep\}$ is uniform bounded $L^\infty([0,T_1], H^5(\Om))$ for some uniform positive number $T_1\leq T_0$. Thus, we obtain the desired $H^5$-regular solution to \eqref{S-eq} by letting $\ep\to 0$.
\subsection{Uniform $H^2$-estimates}\
First of all, we show a uniform $H^2$-estimate of $v$ by directed energy estimates.
Theorem \ref{para-H^5} implies that there holds true
\[v\in L^\infty([0,T], H^{3}(\Om))\cap L^2([0,T], H^{4}(\Om))\]
for any $0<T<T_\ep$, and
\[\sup_{0\leq T<T_0}\norm{v}^2_{H^1(\Om)}\leq C(\norm{u_0}_{H^3})\]
for $0<T_0<T_\ep$.

Let $$w=\p^2_t u_\ep.$$
It belongs to the space $L^\infty([0,T], H^{1}(\Om))\cap L^2([0,T], H^{2}(\Om))$, and satisfies the following equation
\begin{equation}\label{eq-w}
\begin{cases}
\p_t w =\ep \De w+u_\ep\times \De w+2\ep \n w\cdot \n u_\ep u_\ep+\ep |\n u_\ep|^2w+w\times \De u_\ep+f(u_\ep, v),\\[1ex]
\frac{\p w}{\p \nu}|_{\p\Om}=0, \\[1ex]
w(x,0)=V_2(u_0),
\end{cases}
\end{equation}
where
\[f(u_\ep,v)=4\ep\n v\cdot \n u_\ep v+2\ep|\n v|^2u_\ep+2v\times \De v.\]

By taking $w$ as a test function to equation \eqref{eq-w}, we can derive the following
\begin{equation}\label{L^2-w}
\begin{aligned}
\frac{1}{2}\frac{\p }{\p t}\int_{\Om} |w|^2dx+\ep\int_{\Om}|\n w|^2dx=&\int_{\Om}\<u_\ep\times \De w,w\>dx+\ep\int_{\Om}\<|\n u_\ep|^2w,w\>dx\\
&+2\ep\int_{\Om}\<\n w\cdot \n u_\ep u_\ep,w\>dx+\int_{\Om}\<f(u_\ep,v),w\>dx\\
=&I+II+III+IV.
\end{aligned}
\end{equation}
Here we have used Proposition \ref{comp-cond2} to deduce
$$\frac{\p \om}{\p \nu}|_{\p \Om\times[0,T]}=0.$$

Next, we estimate the above I, II, III and IV term by term.
\begin{align*}
|I|=&\left|\int_{\Om}\<\mbox{div}(u_\ep\times \n w),w\>-\<\n u_\ep\times \n w, w\>dx\right|\\
\leq&\int_{\Om}|\n u_\ep||\n w||w|dx\\
\leq &C\norm{u_\ep}_{H^3}\int_{\Om}|w|^2+|\n w|^2dx,\\
|II|=&\ep\left|\int_{\Om}\<|\n u_\ep|^2w,w\>dx\right|\leq C\ep\norm{u_\ep}^2_{H^3}\int_{\Om}|w|^2dx,\\
|III|=&2\ep\left|\int_{\Om}\<\n w\cdot \n u_\ep u_\ep,w\>dx\right|\leq C\ep \norm{u_\ep}^2_{H^3}\int_{\Om}|w|^2+\frac{\ep}{4}\int_{\Om}|\n w|^2dx.
\end{align*}
Here we have used the fact
\[\int_{\Om}\<\mbox{div}(u_\ep\times \n w),w\>dx=-\int_{\Om}\<u_\ep\times \n w,\n w\>dx=0\]
and the Sobolev embedding inequality
\[|\n u_\ep|_{L^\infty}\leq C\norm{u_\ep}_{H^3(\Om)}.\]

For the last term, we have
\begin{align*}
|IV|\leq &4\ep\left|\int_{\Om}\<\n v\cdot \n u_\ep v,w\>dx\right|+2\ep\left|\int_{\Om}\<|\n v|^2u_\ep,w\>dx\right| + 2\left|\int_{\Om}\<v\times \De v,w\>dx\right|\\
=&a+b+c\\
\leq & C\norm{v}^2_{H^1}(\ep \norm{u_\ep}^2_{H^3}\norm{v}^2_{H^1}+\norm{v}^2_{H^2})+C\norm{w}^2_{H^1},
\end{align*}
where
\begin{align*}
|a|\leq &4\ep \int_{\Om}|\n v||\n u_\ep|| v||w|dx\\
\leq &C\ep\norm{u_\ep}_{H^3}\norm{\n v}_{L^2}\norm{v}_{L^3}\norm{w}_{L^6}\\
\leq &C\ep \norm{u_\ep}^2_{H^3}\norm{v}^4_{H^1}+\frac{\ep}{4}\norm{w}^2_{H^1},\\
|b|\leq &\ep \norm{\n v}_{L^2}\norm{\n v}_{L^3}\norm{w}_{L^6}\\
\leq &C\ep\norm{v}^2_{H^1}\norm{v}^2_{H^2}+\frac{\ep}{4}\norm{w}^2_{H^1},\\
|c|\leq &\int_{\Om}|\n v||v||\n w|dx\\
\leq &C \norm{v}^2_{H^1}\norm{v}^2_{H^2}+C\norm{\n w}^2_{L^2}.
\end{align*}

In order to get the desired energy bounds, we need the following estimates on equivalent norms of $\norm{v}_{H^2}$ and $\norm{v}_{H^3}$.
\begin{lem}\label{v-w}
Assume that $u_\ep$ is the solution of \eqref{pra-eq0} obtained in Theorem \ref{para-H^5}. Then there exists a constant $C$ independent of $\ep$ such that for a.e. $t\in[0,T_0]$, the following estimates hold.
\begin{align}
\norm{v}^2_{H^2(\Om)}\leq &C(\norm{u_\ep}^4_{H^3}+1)\norm{v}^2_{H^1}+C\int_{\Om}|w|^2dx,\\
\norm{v}^2_{H^3(\Om)}\leq &C(\norm{u_\ep}^2_{H^3},\norm{v}^2_{H^1})(\norm{w}^2_{H^1}+1).
\end{align}
\end{lem}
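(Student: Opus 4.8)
The plan is to reduce everything to the equivalent Sobolev norm estimate of Lemma \ref{eq-norm}. Since $\frac{\p u_\ep}{\p\nu}|_{\p\Om}=0$ holds for all time, Proposition \ref{comp-cond2} gives $\frac{\p v}{\p\nu}|_{\p\Om}=0$ for $v=\p_tu_\ep$, so Lemma \ref{eq-norm} (with $k=0$ and $k=1$) applies to $v$ and yields
\[\norm{v}_{H^2(\Om)}\leq C(\norm{v}_{L^2}+\norm{\De v}_{L^2}),\quad \norm{v}_{H^3(\Om)}\leq C(\norm{v}_{L^2}+\norm{\De v}_{H^1}).\]
Thus it suffices to control $\norm{\De v}_{L^2}$ and $\norm{\De v}_{H^1}$. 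The crucial device is that the equation lets us trade the Laplacian for time derivatives: from \eqref{p-sch} one derives $\De u_\ep=\frac{1}{1+\ep^2}(\ep\p_tu_\ep-u_\ep\times\p_tu_\ep)-|\n u_\ep|^2u_\ep$, and differentiating in $t$ (with $w=\p_tv=\p^2_tu_\ep$) gives the key identity
\[\De v=\frac{1}{1+\ep^2}(\ep w-u_\ep\times w)-2\<\n v,\n u_\ep\>u_\ep-|\n u_\ep|^2v.\]
The point is that this expresses $\De v$ through $w$, $v$, and only \emph{first} spatial derivatives of $u_\ep$ and $v$.

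For the $H^2$ estimate I would take the $L^2$ norm of the key identity. Because $|u_\ep|=1$ and $\ep<1$, the factor $\frac{1}{1+\ep^2}$ and the cross products are harmless constants, so the first group is $\leq C\norm{w}_{L^2}$. For the remaining terms I invoke the Sobolev embedding $\norm{\n u_\ep}_{L^\infty}\leq C\norm{u_\ep}_{H^3}$, valid for $m\leq 3$, obtaining $\norm{\<\n v,\n u_\ep\>u_\ep}_{L^2}\leq C\norm{u_\ep}_{H^3}\norm{v}_{H^1}$ and $\norm{|\n u_\ep|^2v}_{L^2}\leq C\norm{u_\ep}^2_{H^3}\norm{v}_{L^2}$. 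Squaring, using $\norm{v}_{L^2}\leq\norm{v}_{H^1}$, and absorbing $\norm{u_\ep}^2_{H^3}\leq\norm{u_\ep}^4_{H^3}+1$ produces exactly the first claimed inequality, with every constant independent of $\ep$.

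For the $H^3$ estimate I would bound $\norm{\De v}_{H^1}$ by differentiating the key identity once more in space. The term $\frac{1}{1+\ep^2}(\ep w-u_\ep\times w)$ contributes $\leq C(\norm{u_\ep}_{H^3}+1)\norm{w}_{H^1}$ after writing $\n(u_\ep\times w)=\n u_\ep\times w+u_\ep\times\n w$. The genuinely new feature is that $\n\De v$ now contains a second-order derivative $\n^2v$, coming from $\<\n^2v,\n u_\ep\>u_\ep$, which I control by $C\norm{u_\ep}_{H^3}\norm{v}_{H^2}$ and then feed in the already-proven $H^2$ bound; this bootstrap is what makes the argument close. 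The mixed products such as $\<\n v,\n^2u_\ep\>u_\ep$ and $\<\n^2u_\ep,\n u_\ep\>v$ are handled by H\"older together with the three-dimensional embeddings $\n^2u_\ep\in L^6$ and $\n v,v\in L^3$ (all controlled by $\norm{u_\ep}_{H^3}$ and $\norm{v}_{H^2}$ or $\norm{v}_{H^1}$). Collecting, squaring, and lumping every power of $\norm{u_\ep}_{H^3}$ and $\norm{v}_{H^1}$ into a single constant $C(\norm{u_\ep}^2_{H^3},\norm{v}^2_{H^1})$ yields the second inequality.

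The main obstacle is the $H^3$ step: one must ensure that the term $\n^2v$ appearing in $\n\De v$ does not create a loss of derivatives, which is resolved precisely by bootstrapping through the $H^2$ estimate rather than estimating $\norm{v}_{H^3}$ directly. The secondary technical point is to carry out the cubic product estimates using the borderline three-dimensional Sobolev embeddings while keeping each constant independent of $\ep\in(0,1)$, which is automatic here since $\frac{1}{1+\ep^2}\leq 1$ and $\ep<1$.
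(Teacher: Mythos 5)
Your proposal is correct and follows essentially the same route as the paper: the same identity expressing $\De v$ through $w$, $v$, and first derivatives of $u_\ep$ and $v$, the same term-by-term Sobolev/H\"older estimates with $\norm{\n u_\ep}_{L^\infty}\leq C\norm{u_\ep}_{H^3}$, the bootstrap of the $H^2$ bound into the $\n^2v$ term of $\n\De v$, and the final appeal to the equivalent-norm Lemma \ref{eq-norm} via the Neumann condition $\frac{\p v}{\p\nu}|_{\p\Om}=0$. No substantive differences.
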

\begin{proof}
By using Equation \eqref{pra-eq0}, we can see easily that
\[\De u_\ep=-|\n u_\ep|^2u_\ep+\frac{1}{1+\ep^2}(\ep \p_tu_\ep-u_\ep\times \p_t u_\ep).\]
This leads to
\[\De v=-|\n u_\ep|^2v-2\n v\cdot\n u_\ep u_\ep+\frac{1}{1+\ep^2}(\ep w-u_\ep\times w).\]
Here, $v=\p_t u_\ep$ and $w=\p_t v$.

A direct computation shows
\begin{align*}
\int_{\Om}|\De v|^2dx\leq &C\int_{\Om}|\n v|^2|\n u_\ep|^2dx+C\int_{\Om}|\n u_\ep|^4|v|^2dx+\frac{C}{1+\ep^2}\int_{\Om}|w|^2dx\\
\leq & C\norm{u_\ep}^2_{H^3}(\norm{u_\ep}^2_{H^3}+1)\norm{v}^2_{H^1}+C\int_{\Om}|w|^2dx.
\end{align*}

On the other hand, we have
\begin{align*}
\n \De v=&\n^2 v\#\n u_\ep\# u_\ep+\n v\# \n^2 u_\ep\# u_\ep+\n v\#\n u_\ep\# \n u_\ep+2\n^2 u_\ep\#\n u_\ep\#v\\
&+|\n u_\ep|^2 \n v+\frac{1}{1+\ep^2}(\ep \n w-\n u_\ep \times w-u_\ep\times \n w).
\end{align*}
Then, we have
\begin{align*}
\int_{\Om}|\n \De v|^2dx\leq &C\int_{\Om}|\n^2 v|^2|\n u_\ep|^2dx+C\int_{\Om}|\n v|^2|\n^2 u_\ep|^2dx+C\int_{\Om}|\n^2 u_\ep|^2|\n u_\ep|^2|v|^2dx\\
&+C\int_{\Om}|\n v|^2|\n u_\ep|^4dx+\frac{C}{1+\ep^2}\int_{\Om}|\n w|^2dx+\frac{C}{(1+\ep^2)^2}\int_{\Om}|w|^2|\n u_\ep|^2dx\\
\leq& C\norm{u_\ep}^2_{H^3}\int_{\Om}|\n^2v|^2dx+C\norm{u_\ep}^2_{H^3}\norm{v}^2_{H^2}+C\norm{u_\ep}^4_{H^3}\norm{v}^2_{H^1}\\
&+C\norm{u_\ep}^4_{H^3}\int_{\Om}|\n v|^2dx+C(1+\norm{u_\ep}^2_{H^3})\int_{\Om}|\n w|^2+|w|^2dx\\
\leq&C (\norm{u_\ep}^2_{H^3}+\norm{u_\ep}^4_{H^3})\norm{v}^2_{H^2}+C(1+\norm{u_\ep}^2_{H^3})\norm{w}^2_{H^1}.
\end{align*}
This is the $L^2$-estimate of $\n \De v$.

So, by taking consideration of the fact $\frac{\p v}{\p \nu}|_{\p\Om\times[0,T]}=0$, Lemma \ref{eq-n} implies the desired results in this lemma.
\end{proof}

Therefore, by combining the above estimates of $I-IV$ with the above formula \eqref{L^2-w} and applying Lemma \ref{v-w} we have that for any $0<t\leq T_0$
\begin{equation}\label{L^2-w1}
\frac{1}{2}\frac{\p }{\p t}\int_{\Om} |w|^2dx+\ep\int_{\Om}|\n w|^2dx\leq C(\sup_{0<t\leq T_0}\norm{u_\ep}_{H^3})\(\int_{\Om}(|w|^2+|\n w|^2)dx+1\).
\end{equation}

\medskip
\subsection{Uniform $H^3$-estimates}\

In this subsection, we show a uniform $H^3$-estimate of $v=\p_t u_\ep$. By a similar argument with that in the above subsection, we choose $-\De w$ as a test function to \eqref{eq-w}. However, it seems that we cannot get the desired energy estimates directly, since the lower regularity of $w$, and hence integration by parts do not make sense.

To proceed, we need to improve the regularity of $w$ by applying the $L^2$-estimates of parabolic equation as follows. We know that $w\in L^\infty([0,T], H^{1}(\Om))\cap L^2([0,T], H^{2}(\Om))$ with $0<T<T_\ep$ and satisfies the following equation
\begin{equation}\label{eq-w1}
\begin{cases}
\p_t w =\ep \De w+u_\ep\times \De w+\tilde{f},\\[1ex]
\frac{\p w}{\p \nu}|_{\p\Om\times[0,T_\ep)}=0, \\[1ex]
w(x,0)=V_2(u_0),
\end{cases}
\end{equation}
where
\[\tilde{f}=2\ep \n w\cdot \n u_\ep u_\ep+\ep |\n u_\ep|^2w+w\times \De u_\ep+f(u_\ep, v).\]
It is not difficult to show
\[\tilde{f}\in L^2([0,T], H^{1}(\Om)).\]
Hence, the classical $L^2$-estimates of parabolic equation (also see Theorem A.1 in \cite{CW1}) tells us that
\[w\in L^2_{loc}((0,T], H^{3}(\Om))\]
and
\[\frac{\p w}{\p t}\in L^2_{loc}((0,T], H^{1}(\Om)),\]
which guarantee the integration by parts in the following process of energy estimates make sense.

By taking $\De w$ as a test function of \eqref{eq-w}, we have
\begin{equation}\label{H^1-w}
\begin{aligned}
&\frac{1}{2}\frac{\p}{\p t}\int_{\Om}|\n w|^2+\ep\int_{\Om}|\De w|^2dx\\
=&-\int_{\Om}\<w\times \De u_\ep, \De w\>dx-2\int_{\Om}\<v\times \De v, \De w\>dx\\
&-\ep\int_{\Om}\<|\n u_\ep|^2w, \De w\>dx-2\ep\int_{\Om}\<\n w, \n u_\ep\>\<u_\ep, \De w\>dx\\
&-4\ep\int_{\Om}\<\n v, \n u_\ep\>\<v, \De w\>dx-2\ep\int_{\Om}\<|\n v|^2u_\ep, \De w\>dx\\
=& I^*+ II^* +III^*+IV^*+V^*+VI^*.
\end{aligned}
\end{equation}

Then, we estimate the above six terms in \eqref{H^1-w} step by steps  as follows.
\begin{align*}
|I^*|=&\left|\int_{\Om}\<w\times \n \De u_\ep, \n w\>dx\right|\\
\leq &\norm{\n w}_{L^2}\norm{w}_{L^6}\norm{\n \De u_\ep}_{L^3}\\
\leq &C\norm{w}^2_{H^1}\norm{\n \De u_\ep}_{L^3}\\
\leq &C(\norm{u_\ep}_{H^3})(1+\norm{v}_{H^2})\norm{w}^2_{H^1}\\
\leq &C\norm{w}^2_{H^1}(1+\norm{w}_{L^2}).
\end{align*}
Here, we have used the following formula
\[\De u_\ep=\frac{1}{1+\ep^2}(\ep v-u_\ep\times v)-|\n u_\ep|^2u_\ep\]
to show
\[\norm{\n \De u_\ep}_{L^3}\leq C(\norm{u_\ep}_{H^3})(1+\norm{v}_{W^{1,3}}).\]

\begin{align*}
|II^*|=&\left|\int_{\Om}\<\n v\times \De v, \n w\>dx+\int_{\Om}\<v\times \n \De v, \n w\>dx\right|\\
\leq &\int_{\Om}|\n w|^2dx +\int_{\Om}|\n v|^2|\De v|^2dx+\int_{\Om}|v|^2|\n \De v|^2dx\\
\leq &\int_{\Om}|\n w|^2dx +C\norm{v}^2_{H^2}\norm{v}^2_{H^3}\\
\leq & C(1+\norm{w}^2_{H^1})(1+\norm{w}^2_{L^2}),\\
|III^*|=&\ep\left|\int_{\Om}\<|\n u_\ep|^2w, \De w\>dx\right|\leq C\ep\norm{u_\ep}^4_{H^3}\int_{\Om}|w|^2dx+\frac{\ep}{8}\int_{\Om}|\De w|^2dx,\\
|IV^*|=&2\ep\left|\int_{\Om}\<\n w, \n u_\ep\>\<u_\ep, \De w\>dx\right|\leq C\ep\norm{u_\ep}^2_{H^3}\int_{\Om}|\n w|^2dx+\frac{\ep}{8}\int_{\Om}|\De w|^2dx,\\
|V^*|=&4\ep\left|\int_{\Om}\<\n v, \n u_\ep\>\<v, \De w\>dx\right|\leq C\ep\norm{v}^2_{H^1}(\int_{\Om}|w|^2dx+1)+\frac{\ep}{8}\int_{\Om}|\De w|^2dx,\\
|VI^*|=&2\ep\left|\int_{\Om}\<|\n v|^2u_\ep, \De w\>dx\right|\leq C\ep\norm{v}^2_{H^1}(1+\norm{w}^2_{H^1})+\frac{\ep}{8}\int_{\Om}|\De w|^2dx.
\end{align*}
Hence, for any $0<t\leq T_0$ we have
\begin{equation}\label{H^1-w1}
\frac{1}{2}\frac{\p}{\p t}\int_{\Om}|\n w|^2+\frac{\ep}{2}\int_{\Om}|\De w|^2dx\leq C(\sup_{0<t\leq T_0}\norm{u_\ep}_{H^3})(1+\norm{w}^2_{H^1})(1+\norm{w}^2_{L^2}).
\end{equation}

By combining inequalities \eqref{L^2-w1} with \eqref{H^1-w1}, the classical comparison theorem of ODE (i.e. Corollary \ref{ode}) implies the following $H^1$-estimates of $w$, and hence we can get the uniform $H^5$-estimates of $u_\ep$ by applying equation \eqref{pra-eq0} again.

\begin{prop}\label{H^5-es}
There exists a constant $C$ and $T_1$ depending only on $\norm{u_0}_{H^5}$ such that the solution $u_\ep$  to \eqref{pra-eq0} obtained in Theorem \ref{para-H^5} satisfies the following uniform bounds
\[\sup_{0<t\leq T_1}\norm{\p^i_t u_\ep}^2_{H^{5-2i}(\Om)}\leq C \]
for $i=0, 1, 2$.
\end{prop}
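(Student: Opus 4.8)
The plan is to observe that all the substantial analytic work has already been carried out in the differential inequalities \eqref{L^2-w1} and \eqref{H^1-w1} together with the equivalent-norm estimates of Lemma \ref{v-w}; what remains is to close these estimates into an $\ep$-independent bound and then to propagate the control on $w=\p^2_tu_\ep$ back to $v=\p_tu_\ep$ and $u_\ep$. First I would absorb the factor $C(\sup_{0<t\le T_0}\norm{u_\ep}_{H^3})$ appearing in \eqref{L^2-w1} and \eqref{H^1-w1} into a single constant, which is legitimate and $\ep$-independent precisely because Theorem \ref{para-H^5} furnishes a uniform $H^3$-bound on $[0,T_0]$. Setting $y(t)=\norm{w}^2_{H^1}=\int_\Om(|w|^2+|\n w|^2)\,dx$ and adding \eqref{L^2-w1} (after discarding the nonnegative term $\ep\int_\Om|\n w|^2\,dx$) to \eqref{H^1-w1}, while using $\norm{w}^2_{L^2}\le\norm{w}^2_{H^1}$, I expect to arrive at a closed scalar differential inequality of the form
\[
\frac{d}{dt}y(t)\le C\,(1+y(t))^2,
\]
with $C$ depending only on $\norm{u_0}_{H^3}$.

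Next I would record the initial value $y(0)=\norm{w(0)}^2_{H^1}=\norm{V_2(u_0)}^2_{H^1}$. Since $V_2(u_0)=\p^2_tu|_{t=0}$ is determined algebraically through the equation from $u_0$ and its spatial derivatives up to fourth order, its $H^1$-norm is controlled by $\norm{u_0}_{H^5}$; hence $y(0)\le C(\norm{u_0}_{H^5})$, uniformly in $\ep$. The function $f(z)=C(1+z)^2$ is positive, locally Lipschitz and nondecreasing on $\Real^+$, so Corollary \ref{ode} applies to the maximal solution $z$ of the Cauchy problem $z'=f(z)$, $z(0)=y(0)$, which exists on some interval $[0,T^*)$ with $T^*>0$ depending only on $y(0)$. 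Choosing $T_1\in(0,\min\{T_0,T^*\})$, Corollary \ref{ode} gives $y(t)\le z(T_1)$ for $t\in[0,T_1]$, and hence the $\ep$-independent bound $\sup_{0<t\le T_1}\norm{w}^2_{H^1}\le C$; since $T_0$ depends only on $\norm{u_0}_{H^3}$ and $T^*$ only on $y(0)$, the time $T_1$ depends only on $\norm{u_0}_{H^5}$. This is exactly the $i=2$ assertion.

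Finally I would propagate regularity in the spatial variable. The $i=1$ bound follows immediately by inserting the uniform $H^3$-bound on $u_\ep$ and the just-obtained $H^1$-bound on $w$ into the second inequality of Lemma \ref{v-w}, giving $\sup_{0<t\le T_1}\norm{v}^2_{H^3}\le C$. For the $i=0$ bound I would use \eqref{pra-eq0} in the rewritten form
\[
\De u_\ep=\frac{1}{1+\ep^2}\(\ep v-u_\ep\times v\)-|\n u_\ep|^2u_\ep,
\]
combined with Lemma \ref{eq-norm}. Here one must be slightly careful: directly estimating $\norm{\De u_\ep}_{H^3}$ appears to require $\norm{u_\ep}_{H^4}$ through the nonlinear term $|\n u_\ep|^2u_\ep$, which is not yet available. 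I would therefore bootstrap in two stages: first bound $\norm{\De u_\ep}_{H^2}$ using $\norm{v}_{H^2}$ (controlled by Lemma \ref{v-w}) and the uniform $\norm{u_\ep}_{H^3}$, so that Lemma \ref{eq-norm} with $k=2$ yields a uniform $\norm{u_\ep}_{H^4}$-bound; then estimate $\norm{\De u_\ep}_{H^3}$ using $\norm{v}_{H^3}$ and the fresh $\norm{u_\ep}_{H^4}$-bound, so that Lemma \ref{eq-norm} with $k=3$ produces the uniform $\norm{u_\ep}_{H^5}$-bound.

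Since the analytically demanding steps (the two energy identities and the equivalent-norm lemma) are already in hand, I expect the only genuine subtleties to be the harmless absorption of the $\ep$-independent $H^3$-coefficient in the first step, and the two-stage spatial bootstrap in the last step, whose sole purpose is to circumvent the apparent loss of one derivative in the term $|\n u_\ep|^2u_\ep$.
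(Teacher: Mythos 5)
Your proposal is correct and follows essentially the same route as the paper: the same ODE comparison via Corollary \ref{ode} applied to $y(t)=\norm{w}^2_{H^1}$ with $y(0)=\norm{V_2(u_0)}^2_{H^1}\leq C(\norm{u_0}_{H^5})$, the same use of Lemma \ref{v-w} for the $i=1$ bound, and the same two-stage elliptic bootstrap ($H^4$ then $H^5$) for $i=0$. The only detail the paper makes explicit that you omit is the continuity of $y(t)$ on $[0,T_0]$ (obtained from Lemma \ref{C^0-em} via $w\in L^\infty H^1\cap L^2H^2$ and $\p_tw\in L^2L^2$), which is needed to justify the comparison argument.
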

\begin{proof}
Let $y(t)=\norm{w}^2_{H^1}$. Since $u_\ep$ is a solution to \eqref{pra-eq0} in Theorem \ref{para-H^5}, we have
\[w\in L^\infty([0,T], H^{1}(\Om))\cap L^2([0,T], H^{2}(\Om))\]
and
\[\frac{\p w}{\p t}\in L^2([0,T], L^2(\Om))\]
by using equation \eqref{eq-w}. Hence, Lemma \ref{C^0-em} implies
\[w\in C^0([0,T], H^1(\Om))\]
for any $0<T\leq T_0$. It follows that $y(t)$ is a continuous function on $[0,T_0]$.

On the other hand, the inequalities \eqref{L^2-w1} and \eqref{H^1-w1} tell us that $y$ satisfies the following differential inequality
\begin{equation*}
\begin{cases}
y'(t)\leq C(1+y)^2,\\[1ex]
y(0)=\norm{w}^2_{H^1}|_{t=0}=\norm{V_2}^2_{H^1}.
\end{cases}
\end{equation*}

Let $T^*>0$ be the maximal existence time of solution to the below ODE
\begin{equation*}
\begin{cases}
z'(t)=C(1+z)^2,\\[1ex]
z(0)=\norm{V_2}^2_{H^1},
\end{cases}
\end{equation*}
which only depends on $\norm{V_2}^2_{H^1}$. Then, by Corollary \ref{ode} we have
\[y(t)\leq z(t)\leq z(T)\]
for $0<t<T<\min\{T_0, T^*\}$.

To end the proof of the proposition, it remains to estimate $\norm{V_2}_{H^1}$. Since
\[V_2(u_0)=\p^2_t u_\ep|_{t=0},\]
it is not difficult to show
\[\norm{V_2}_{H^1}\leq C(\norm{u_0}_{H^5}).\]
Therefore, by setting  $T_1=\min\{T_0, 0.9T^*\}$, we have
\[\sup_{0<t\leq T_1}\norm{w}^2_{H^{1}}\leq C .\]
Consequently, Lemma \eqref{v-w} implies
\[\sup_{0<t\leq T_1}\norm{v}^2_{H^3}\leq C.\]

On the other hand, since \[\sup_{0<t\leq T_1}\norm{u_\ep}^2_{H^3}\leq C,\] we apply the $L^2$-estimates to elliptic equation
\[\De u_\ep=\frac{1}{1+\ep^2}(\ep v-u_\ep\times v)-|\n u_\ep|^2u_\ep,\]
to show 
\[\sup_{0<t\leq T_1}\norm{u_\ep}^2_{H^4}\leq C.\]

Once we obtain the above improved estimate of $u_\ep$, then we can get
\[\sup_{0<t\leq T_1}\norm{u_\ep}^2_{H^5}\leq C\]
by using equation \eqref{pra-eq0} and the $L^2$-estimates of elliptic equation again.
\end{proof}

\medskip
With the above proposition \ref{H^5-es} at hand, we are in the position to provide the proof of Theorem \ref{thm1} by taking an argument of convergence.

\begin{proof}[\textbf{The proof of Theorem \ref{thm1}}]
Proposition \ref{H^5-es} tells us that there exists a number $T_1>0$ independent of $\ep$ such that $u_\ep$ has the following uniform estimate with respect to $\ep$
\[\sup_{0<t\leq T_1}\norm{\p^i_t u_\ep}^2_{H^{5-2i}(\Om)}\leq C \]
for $i=0, 1, 2$.

Without loss of generality, we assume that there exists a map in $u\in L^\infty([0,T_1], H^5(\Om))$ such that
\[u_\ep\rightharpoonup u\quad \text{weakly* in}\quad u\in L^\infty([0,T_1], H^5(\Om)),\]
and
\[\frac{\p u_\ep}{\p t}\rightharpoonup \frac{\p u}{\p t}\quad \text{weakly in}\quad L^2([0,T_1], H^3(\Om)).\]
Let $X=H^5(\Om)$, $B=H^4(\Om)$ and $Y=L^2(\Om)$.  Then Lemma \ref{A-S} implies
\[u_\ep\to u \quad \text{strongly in}\quad L^\infty([0,T_1], H^4(\Om)),\]
and hence, we have
\[u_\ep\to u \quad  \text{a.e. (x,t)} \in \Om\times[0,T_1]\]
with $|u|=1$.

On the other hand, since $u_\ep$ is a strong solution to \eqref{pra-eq0}, there holds
\[\int_{0}^{T_1}\int_{\Om}\<\frac{\p u_\ep}{\p t},\phi\>dxdt-\ep\int_{0}^{T_1}\int_{\Om}\<\De u_\ep+|\n u_\ep|^2 u_\ep,\phi\>dxdt=\int_{0}^{T_1}\int_{\Om}\<u_\ep\times \De u_\ep,\phi\>dxdt,\]
for all $\phi\in C^\infty(\bar{\Om}\times[0,T_1])$.

By using the above convergence of $u_\ep$, we can show directly that $u$ is a strong solution to \eqref{S-eq} by letting $\ep\to 0$. Moreover, the lower semi-continuity of weak convergence implies
\[\p^i_tu\in L^\infty([0,T_1], H^{5-2i}(\Om)) ,\]
for $i=0,1,2$.

To complete the proof, we need to verify $\frac{\p u}{\p \nu}|_{\p \Om\times[0,T_0]}=0$ which means $u$ satisfies the Neumann boundary condition. Since there holds true that for any $\xi\in C^{\infty}(\bar{\Om}\times[0,T_1])$
$$\int_{0}^{T_1}\int_{\Om}\<\De u_{\ep},\xi\>dxdt=-\int_{0}^{T_1}\int_{\Om}\<\n u_\ep,\n \xi\>dxdt.$$
Let $\ep\to 0$, we have
$$\int_{0}^{T_1}\int_{\Om}\<\De u,\xi\>dxdt=-\int_{0}^{T_1}\int_{\Om}\<\n u,\n \xi\>dxdt,$$
this means
$$\frac{\p u}{\p \nu}|_{\p \Om\times [0,T_1]}=0.$$
\end{proof}
\medskip
\section{Very regular local solution}\label{s: hig-reg}
In this section, we adopt the method of induction to show the existence of very regular solution to \eqref{S-eq} by proving the following theorem, namely Theorem \ref{thm2}.
\begin{thm}\label{V-sol}
Suppose that $u_0\in H^{2k+1}(\Om,\U^2)$ with $k\geq 2$, which satisfies the $(k-1)$-order compatibility conditions defined by \eqref{s-com-cond}. Let $u$ and $T_1>0$ be the same as that in Theorem \ref{thm1}. Then for any $0\leq i\leq k$, we have
\[\p^i_t u\in L^\infty([0,T_1], H^{2k+1-2i}(\Om)).\]
Additionally, if $u_0\in C^\infty(\bar{\Om})$, which satisfies the $k$-order compatibility conditions defined by \eqref{s-com-cond} for any $k\geq 0$, we also have
\[u\in C^\infty(\bar{\Om}\times [0,T_1]).\]
\end{thm}

Recall that the existence of very regular solution $u_\ep$ to the parabolic perturbed equation \eqref{pra-eq0} of equation \eqref{S-eq} has been shown in \cite{CJ} (also see the authors' work \cite{CW}), for the completeness and convenience we summarize the conclusions in below theorem.

\begin{thm}\label{para-V-sol}
Suppose that $u_0\in H^{2k+1}(\Om,\U^2)$ with $k\geq 2$, which satisfies the $(k-1)$-order compatibility condition defined in \eqref{com-cond1}. Let $u_\ep$ and $T_\ep>0$ be the same as that in Theorem \ref{para-H^5}. Then, for $0 \leq i\leq k$ and $0<T<T_\ep$ there holds true
\[\p^i_tu_\ep\in L^\infty([0,T], H^{2k+1-2i}(\Om))\cap L^2([0,T], H^{2k+2-2i}(\Om)).\]
\end{thm}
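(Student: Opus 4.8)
The plan is to read this off from the parabolic theory for the quasilinear system \eqref{pra-eq0}, exactly as carried out in \cite{CJ, CW}; here I sketch the energy-method route one would follow in reproving it directly. The decisive structural observation is that for fixed $\ep>0$ the principal operator acting on $\De u_\ep$ is $\ep\,\mathrm{Id}+(u_\ep\times)$, whose symmetric part is $\ep\,\mathrm{Id}$ and is positive definite; hence \eqref{pra-eq0} is a (Petrowsky) parabolic system and one has genuine smoothing of strength $\ep$ at every order. Because the constants here are allowed to depend on $\ep$, this smoothing may be used freely to absorb top-order terms, so---unlike in the $\ep$-uniform estimates needed for Theorem \ref{thm1}---there is no loss of derivatives to fight. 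Writing $w_i=\p^i_t u_\ep$, the scheme is to establish the highest estimate $w_k\in L^\infty([0,T],H^1)\cap L^2([0,T],H^2)$ and then descend in $i$ by elliptic bootstrapping.

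First I would differentiate \eqref{pra-eq0} $i$ times in $t$ to obtain, for each $0\le i\le k$, an equation of the form
\[\p_t w_i=\ep\De w_i+u_\ep\times\De w_i+F_i,\]
where $F_i$ is a finite sum of products of $w_0,\dots,w_i$ and their spatial derivatives, arranged so that each monomial respects the parabolic scaling (one $\p_t$ costing two $\p_x$). Multiplying by $w_i$ and integrating, the antisymmetry of $u_\ep\times$ turns the Schr\"odinger term, after integration by parts, into the controllable quantity $-\int_{\Om}\<\n u_\ep\times\n w_i,w_i\>dx$, while $\ep\int_{\Om}|\n w_i|^2dx$ supplies the dissipation; this is precisely the structure already displayed in \eqref{L^2-w} for $i=2$. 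All boundary terms vanish because $\frac{\p w_i}{\p\nu}|_{\p\Om\times[0,T]}=0$, which follows from differentiating the Neumann condition in time and is made rigorous by Proposition \ref{comp-cond2}. The $(k-1)$-order compatibility hypothesis \eqref{com-cond1} enters here: it guarantees $\frac{\p V_i}{\p\nu}|_{\p\Om}=0$ for the admissible initial data $w_i(0)=V_i(u_0)$, $i\le k-1$ (Definition \ref{def-comp} and Proposition \ref{intrc-cd}), so the energy scheme is consistent with the boundary condition.

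Next I would close the estimates by an elliptic reformulation together with the equivalent-norm Lemma \ref{eq-norm}. Writing \eqref{pra-eq0} in the form
\[\De w_{i-1}=\frac{1}{1+\ep^2}\(\ep w_i-u_\ep\times w_i\)-G_{i-1},\]
where $G_{i-1}$ gathers the lower-order nonlinear terms, the control of $w_i$ in $L^\infty H^1\cap L^2 H^2$ upgrades $w_{i-1}$ to $L^\infty H^3\cap L^2 H^4$; iterating this descent from $i=k$ down to $i=0$ yields $u_\ep\in L^\infty H^{2k+1}\cap L^2 H^{2k+2}$ together with the full hierarchy claimed. The passage from the resulting differential inequality to an a priori bound on each $[0,T]$ with $T<T_\ep$ is furnished by the comparison Corollary \ref{ode}, and the continuity in time needed to legitimize the traces and integrations by parts comes from Lemma \ref{C^0-em}.

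The step I expect to be the main (bookkeeping) obstacle is the organization and estimation of the nonlinear remainders $F_i$ and $G_{i-1}$: one must verify, monomial by monomial, that every factor is dominated by the norms already available at that stage of the induction, using Sobolev product (Moser) estimates and the embedding $H^2(\Om)\hookrightarrow L^\infty(\Om)$ valid for $m\le 3$. Since the parabolic dissipation absorbs the single top-order factor in each term, and since all compatibility and trace issues have been settled in Section \ref{s: com-cond} and Proposition \ref{comp-cond2}, the remainder is a routine---if lengthy---induction, and for the complete computations we refer to \cite{CJ, CW}.
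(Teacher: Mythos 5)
Your proposal is correct and matches the paper's treatment: the paper itself gives no proof of Theorem \ref{para-V-sol}, stating only that the result ``has been shown in \cite{CJ} (also see \cite{CW})'' and recording it for later use, which is exactly the citation you lead with. The sketch you append --- fixed-$\ep$ parabolicity from the positive symmetric part $\ep\,\mathrm{Id}$, time-differentiation to the equations for $w_i=\p_t^i u_\ep$, vanishing of boundary terms via the compatibility conditions and Proposition \ref{comp-cond2}, and elliptic descent through the equivalent norms of Lemma \ref{eq-norm} --- is consistent with the method of \cite{CJ,CW} and with the paper's own $\ep$-uniform induction in Section \ref{s: hig-reg}, so no gap is introduced.
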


First of all, we should mention that the compatibility conditions defined by \eqref{s-com-cond} implies the conditions in \eqref{com-cond1}. This guarantees that the approximate solution $u_\ep$ can  certainly tend to a solution of \ref{S-eq}. In the next context,  we shall get higher order uniform energy estimates of $u_\ep$  and then take $\ep\to 0$ to prove Theorem \ref{V-sol}. To this end, we use the method of induction  on $k$ to show higher order uniform energy estimates of $u_\ep$ by considering the equation of
\[w_k=\p^k_t u_\ep\]
with matching initial-boundary data. Namely, we will prove the following proposition.
\begin{prop}\label{hig-es}
Under the same assumption as in the above theorem \ref{para-V-sol}, for $0\leq i\leq k$ there exists a positive constant $C_k$ depending only on $\norm{u_0}_{H^{2k+1}(\Om)}$ such that
\[\sup_{0<t\leq T_1}\norm{w_i}_{H^{2k+1-2i}(\Om)}\leq C_k(\norm{u_0}_{H^{2k+1}(\Om)}).\]
\end{prop}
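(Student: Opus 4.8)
The plan is to argue by induction on $k$, the base case $k=2$ being exactly the uniform bounds recorded in Proposition \ref{H^5-es}. So I would suppose the conclusion holds with $k$ replaced by $k-1$, which furnishes uniform (in $\ep$) bounds $\sup_{0<t\le T_1}\norm{w_i}_{H^{2k-1-2i}(\Om)}\le C_{k-1}$ for all $0\le i\le k-1$; in particular each of $w_0,\dots,w_{k-1}$ is controlled, and only the top derivative $w_k=\p^k_t u_\ep$ together with two extra spatial orders for each $w_i$ remain to be gained. The argument then splits into two stages: first extract a uniform $H^1$-bound for $w_k$ by energy estimates, and then recover the full family of bounds by a downward elliptic bootstrap.

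For the energy stage I would differentiate \eqref{pra-eq0} $k$ times in $t$ to obtain an equation of the schematic form $\p_t w_k=\ep\De w_k+u_\ep\times\De w_k+\sum_{j=1}^{k}c_j\,w_j\times\De w_{k-j}$ plus parabolic $\ep$-terms of lower order, whose boundary data satisfy $\frac{\p w_k}{\p\nu}|_{\p\Om}=0$ by Proposition \ref{comp-cond2} and whose initial value is $V_k(u_0)$. Testing against $w_k$ and against $-\De w_k$ (the latter after the parabolic regularization of $w_k$, justified exactly as in the passage preceding \eqref{H^1-w}), the leading dispersive term drops out since $u\times$ is antisymmetric, so that $\int_\Om\<u_\ep\times\De w_k,\De w_k\>\,dx=0$ and the $w_k$-pairing is treated by moving one derivative as in the estimate of $I$. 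Every remaining nonlinearity is a product in which at most one factor carries the top regularity: the factors $\De u_\ep$ and, more generally, $\De w_j$ with $j<k$ are rewritten through $\De u_\ep=\frac{1}{1+\ep^2}(\ep w_1-u_\ep\times w_1)-|\n u_\ep|^2u_\ep$ and its $t$-derivatives, so their derivatives fall back onto lower time-derivatives bounded by the inductive hypothesis, while the single factor that does reach the top level, namely $w_{k-1}$, is controlled through $\norm{w_{k-1}}_{H^2}$ and $\norm{w_{k-1}}_{H^3}$ by $\norm{w_k}_{L^2}$ and $\norm{w_k}_{H^1}$ via the equivalent-norm estimate of Lemma \ref{w_{k-1}-w_k} (the higher-$k$ analogue of Lemma \ref{v-w}).

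The decisive structural point is that for $k\ge3$ the indices $j=1$ and $j=k-1$ are distinct, so in each product $w_j\times\De w_{k-j}$ at most one factor, either $w_{k-1}$ or $\De w_{k-1}$, lies above the regularity already controlled by the inductive hypothesis, every other factor being bounded; this is precisely what fails at $k=2$, where $j=1=k-1$ forces the self-interaction $w_1\times\De w_1$ to carry two top factors, and is the reason $k=2$ is taken as the base case. Consequently the differential inequality for $y(t)=\norm{w_k}^2_{H^1}$ is only linear, $y'\le C_k(1+y)$, with $C_k$ depending solely on the inductively bounded quantities and on $\sup_{[0,T_1]}\norm{u_\ep}_{H^3}$, which is exactly what lets me close the estimate on the whole interval $[0,T_1]$ rather than on a shorter one. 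Applying Corollary \ref{ode} with initial value $\norm{V_k}^2_{H^1}\le C(\norm{u_0}_{H^{2k+1}})$ — finite by the explicit formula \eqref{eq-V_k} expressing $V_k$ through spatial derivatives of $u_0$ up to order $2k$ — then yields $\sup_{0<t\le T_1}\norm{w_k}_{H^1}\le C_k$.

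Finally I would run the downward bootstrap. Lemma \ref{w_{k-1}-w_k} upgrades the $H^1$-bound on $w_k$ to $\norm{w_{k-1}}_{H^3}\le C_k$; differentiating the elliptic identity for $\De u_\ep$ in $t$ and invoking the Neumann equivalent norm of Lemma \ref{eq-norm} gives $\norm{w_{k-2}}_{H^5}\le C(\norm{w_{k-1}}_{H^3}+\text{bounded lower-order terms})$, and iterating this step produces $\norm{w_i}_{H^{2k+1-2i}}\le C_k$ for every $0\le i\le k$, in particular $\norm{u_\ep}_{H^{2k+1}}\le C_k$. The step I expect to be the main obstacle is the energy estimate of the second and third paragraphs: one must simultaneously keep all boundary terms vanishing (compatibility together with Proposition \ref{comp-cond2}), use the antisymmetry of $J=u\times$ to annihilate the top-order dispersive contribution, and rewrite each $\De w_j$ through the equation so that no term loses a derivative. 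It is precisely the combination of these three facts that reduces the self-interaction of $w_k$ to a single linear occurrence and makes the inequality closeable on the fixed interval $[0,T_1]$.
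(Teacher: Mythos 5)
Your proposal is correct and follows essentially the same route as the paper: induction on $k$ with base case $k=2$ from Proposition \ref{H^5-es}, testing the $w_k$-equation against $w_k$ and (after parabolic regularization) against $\De w_k$ with boundary terms killed by Proposition \ref{comp-cond2}, controlling the sole top-order factor $w_{k-1}$ via the equivalent-norm Lemma \ref{w_{k-1}-w_k}, closing a linear Gronwall inequality on all of $[0,T_1]$ with initial datum $\norm{V_k(u_0)}_{H^1}\le C(\norm{u_0}_{H^{2k+1}})$, and finishing with the downward elliptic bootstrap through $\De u_\ep$ and Lemma \ref{eq-norm}. Your structural remark about why the differential inequality is linear for $k\ge 3$ (the indices $1$ and $k-1$ are distinct, unlike at $k=2$) is exactly the feature the paper exploits implicitly.
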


In fact, one can see easily that, in the previous subsection, the conclusions in Proposition \ref{hig-es} have been shown for $k=2$.

Next, we will use the method of induction on $k$ to show Proposition \ref{hig-es}. Suppose that the estimates in Proposition \ref{hig-es} are already established for $k-1\geq 2$. Then we want to prove that the conclusions are also true in the case of $k$.

For any $k\geq 3$, Theorem \ref{para-V-sol} shows that $w_k\in L^\infty([0,T], H^1(\Om))\cap L^2([0,T], H^2(\Om))$ and satisfies the following equation
\begin{equation}\label{w_k}
\begin{cases}
\p_t w_k=\ep \De w_k+u_\ep\times \De w_k+K_k(\n w_k)+L_k(w_k)+F_k(u_\ep), &\text{(x,t)}\in\Om\times [0, T_\ep),\\[1ex]
\frac{\p w_k}{\p \nu}=0, &\text{(x,t)}\in\p \Om\times [0, T_\ep),\\[1ex]
w(x,0)=V_k(u_0),  &x\in \Om.
\end{cases}
\end{equation}
Here
\begin{equation*}
\begin{aligned}
K_k(\n w_k)=&2\ep \n w_k\cdot \n u_\ep u_\ep,\\
L_k(w_k)=&\ep|\n u_\ep|^2 w_k+w_k\times \De u_\ep,\\
\end{aligned}
\end{equation*}
and
$$F_k(u_\ep)=\ep\sum_{i+j+l=k,\,0\leq i,j,l<k}\n w_i\#\n w_j\#w_l+\sum_{i+j=k,0\leq i,j<k} C^i_kw_i\times \De w_j,$$
where $V_k(u_0)$ is defined in Section \ref{s: com-cond} and $\#$ denotes the linear contraction.

On the other hand, the  assumption of induction shows that for any $i\in \{0, 1,\cdots, k-1\}$ there exists a constant $C_k(\norm{u_0}_{H^{2k-1}})$, which does not depend on $\ep$, such that
\begin{equation}\label{pre-es}
\sup_{0<t\leq T_1}\norm{w_i}_{H^{2(k-i-1)+1}}\leq C_k(\norm{u_0}_{H^{2k-1}(\Om)}).
\end{equation}

Next, we will adopt a similar procedure with that in Section  \ref{s: H^5} for $w_k=\p^k_tu_\ep$ to get the uniform $H^1$-estimates of $w_k$.

\subsection{Estimates of equivalent norms}\
For later application, we need to establish some lemmas on Sobolev space and the equivalent norms of the energy which we need to estimate. We start with recalling the following lemma, the proof of which can be found in \cite{CJ}.

\begin{lem}\label{alg}
Let $\Om$ be a smooth bounded domain in $\Real^3$, $n\geq 0$ and $m\geq 2$. Suppose $f\in H^n(\Om)$ ( and we also denote $H^0(\Om)=L^2(\Om)$) and $g\in H^m(\Om)$, then $fg\in H^l(\Om)$ with $l=\min\{n,m\}$. Moreover, there exists a constant $C(\norm{f}_{H^n}, \norm{g}_{H^m})$ such that we have
\[\norm{fg}_{H^l(\Om)}\leq C(\norm{f}_{H^n}, \norm{g}_{H^m}).\]	
\end{lem}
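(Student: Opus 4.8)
The plan is to reduce the whole statement to two regimes according to the size of $l=\min\{n,m\}$, exploiting that the ambient dimension is $3$, so that $H^2(\Om)\hookrightarrow L^\infty(\Om)$ and, more generally, $H^s(\Om)$ is a Banach algebra once $s\geq 2$. The key preliminary observation is that, since $m\geq 2$ by hypothesis, the case $l\leq 1$ can occur only when $l=n$; hence only $n\in\{0,1\}$ requires a low-regularity analysis, while every situation with $l\geq 2$ is handled uniformly by the algebra property. All multiplications are understood componentwise, so it suffices to treat scalar factors.

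\emph{Regime $l\geq 2$.} Here both factors already lie in $H^l(\Om)$, since $f\in H^n(\Om)\subset H^l(\Om)$ and $g\in H^m(\Om)\subset H^l(\Om)$ because $l=\min\{n,m\}$. I would establish the algebra estimate $\norm{\phi\psi}_{H^l(\Om)}\leq C\norm{\phi}_{H^l(\Om)}\norm{\psi}_{H^l(\Om)}$ for $l\geq 2$ in dimension three via the Leibniz rule: for every multi-index $|\alpha|\leq l$,
\[
D^\alpha(\phi\psi)=\sum_{\beta\leq\alpha}\binom{\alpha}{\beta}\,D^\beta\phi\,D^{\alpha-\beta}\psi,
\]
and each summand is estimated in $L^2(\Om)$. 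Setting $a=l-|\beta|$ and $b=l-|\alpha-\beta|$, one has $a,b\geq 0$ and $a+b=2l-|\alpha|\geq l\geq 2$, so exactly one of three cases occurs: if $a\geq 2$ one places $D^\beta\phi\in H^2\hookrightarrow L^\infty$ and $D^{\alpha-\beta}\psi\in L^2$; if $b\geq 2$ one argues symmetrically; and in the single balanced case $a=b=1$ one places both factors in $L^4$ via $H^1(\Om)\hookrightarrow L^4(\Om)$. Applying this with $\phi=f$, $\psi=g$ and using $\norm{f}_{H^l}\leq\norm{f}_{H^n}$, $\norm{g}_{H^l}\leq\norm{g}_{H^m}$ gives the claimed bound with $C=C(\norm{f}_{H^n},\norm{g}_{H^m})$.

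\emph{Regime $l\leq 1$, i.e. $n\in\{0,1\}$.} I would argue directly by Leibniz and the three-dimensional embeddings $H^1(\Om)\hookrightarrow L^6(\Om)$, $H^1(\Om)\hookrightarrow L^3(\Om)$ and $H^2(\Om)\hookrightarrow L^\infty(\Om)$. If $n=0$ then $l=0$ and $\norm{fg}_{L^2}\leq\norm{f}_{L^2}\norm{g}_{L^\infty}\leq C\norm{f}_{L^2}\norm{g}_{H^2}\leq C\norm{f}_{L^2}\norm{g}_{H^m}$. If $n=1$ then $l=1$, and it remains to bound $\norm{D(fg)}_{L^2}$; writing $D(fg)=Df\,g+f\,Dg$, the first term is controlled by $\norm{Df}_{L^2}\norm{g}_{L^\infty}$, while the second is controlled by $\norm{f}_{L^6}\norm{Dg}_{L^3}\leq C\norm{f}_{H^1}\norm{Dg}_{H^1}\leq C\norm{f}_{H^1}\norm{g}_{H^2}$, where $Dg\in H^{m-1}\subset H^1$ because $m\geq 2$. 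Combining these with the $L^2$ estimate of $fg$ yields the desired $H^1$ bound.

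The main technical point to watch is the Hölder--Sobolev bookkeeping in the algebra estimate for $l\geq 2$: one must check that for every admissible split $\beta+(\alpha-\beta)=\alpha$ with $|\alpha|\leq l$ the two factors $D^\beta\phi\in H^{a}$ and $D^{\alpha-\beta}\psi\in H^{b}$ can be paired in $L^2$ through conjugate Lebesgue exponents supplied by \emph{genuine}, non-endpoint Sobolev embeddings in $\Real^3$. Because $a+b=2l-|\alpha|\geq l\geq 2>3/2$ (equivalently, at the level of the multiplication theorem, $\max\{n,m\}\geq m\geq 2>3/2$), the relevant condition holds \emph{strictly}, so no borderline embedding is ever needed and each exponent can be chosen in the open admissible range; this is precisely what keeps the constant $C$ finite and dependent only on $\norm{f}_{H^n}$ and $\norm{g}_{H^m}$.
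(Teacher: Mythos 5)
Your argument is correct. Note, however, that the paper does not actually prove Lemma \ref{alg}: it only recalls the statement and refers the reader to \cite{CJ} for the proof, so there is no in-paper argument to compare against. Your self-contained proof is the standard one for the multiplication theorem $H^n\cdot H^m\hookrightarrow H^{\min\{n,m\}}$ in dimension three: the Leibniz expansion with the bookkeeping $a+b=2l-|\alpha|\geq l\geq 2$ correctly isolates the only balanced case $a=b=1$ (handled by $H^1(\Om)\hookrightarrow L^4(\Om)$), the low-regularity cases $n\in\{0,1\}$ are treated with the right embeddings $H^2(\Om)\hookrightarrow L^\infty(\Om)$ and $H^1(\Om)\hookrightarrow L^6(\Om)$, and your observation that $m\geq 2$ forces $l=n$ whenever $l\leq 1$ is exactly what makes the case split exhaustive. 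The only cosmetic slips are that the three cases in the $l\geq 2$ regime can overlap (so ``at least one'' rather than ``exactly one'' occurs, which is harmless) and that your constant is in fact the bilinear bound $C\norm{f}_{H^n}\norm{g}_{H^m}$, which is stronger than the stated $C(\norm{f}_{H^n},\norm{g}_{H^m})$.
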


\begin{lem}\label{w_{k-1}-w_k}
Assume $u_\ep$ is the solution of \eqref{pra-eq0} given in Theorem \ref{para-V-sol}. Then, there exist constants $C_k$ which are independent of $\ep$ such that
\begin{align}
\norm{w_{k-1}}^2_{H^2(\Om)}\leq &C_k(\norm{u_0}^2_{H^{2(k-1)+1}})+2\int_{\Om}|w_k|^2dx,\\
\norm{w_{k-1}}^2_{H^3(\Om)}\leq &C_k(\norm{u_0}^2_{H^{2(k-1)+1}})(\norm{w_k}^2_{H^1}+1),
\end{align}
for a.e. $t\in[0,T_1]$.
\end{lem}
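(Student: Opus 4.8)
The plan is to follow the same strategy used in the proof of Lemma \ref{v-w}, which is essentially the case $k=2$ of the present statement: I rewrite the Laplacian of $w_{k-1}$ in terms of the next time derivative $w_k$ plus lower-order products, and then invoke the equivalent-norm estimate of Lemma \ref{eq-norm}, which is applicable because the Neumann condition $\frac{\p w_{k-1}}{\p \nu}|_{\p\Om}=0$ holds by Proposition \ref{comp-cond2}. Concretely, writing equation \eqref{pra-eq0} in the form
\[\De u_\ep=\frac{1}{1+\ep^2}(\ep\p_t u_\ep-u_\ep\times\p_t u_\ep)-|\n u_\ep|^2u_\ep\]
and applying $\p_t^{k-1}$ together with the Leibniz rule, I obtain
\[\De w_{k-1}=\frac{1}{1+\ep^2}(\ep w_k-u_\ep\times w_k)+R_{k-1},\]
where $R_{k-1}$ is a finite sum of terms of the schematic form $w_i\times w_j$ (with $1\le i\le k-1$, $i+j=k$) and $\n w_i\cdot\n w_j\,w_l$ (with $i+j+l=k-1$), so that every factor is a time derivative of order at most $k-1$ and all coefficients are bounded uniformly in $\ep\in(0,1)$.

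For the $H^2$ bound I first estimate $\norm{\De w_{k-1}}_{L^2}$. The leading term is controlled using the orthogonality $\<w_k,u_\ep\times w_k\>=0$ together with $|u_\ep|=1$, which gives
\[\left|\tfrac{1}{1+\ep^2}(\ep w_k-u_\ep\times w_k)\right|^2\le\tfrac{1}{1+\ep^2}|w_k|^2\le|w_k|^2\]
uniformly in $\ep$; this is what produces the clean coefficient in front of $\int_\Om|w_k|^2$. The remainder $R_{k-1}$ is estimated in $L^2$ using the induction hypothesis \eqref{pre-es}, which provides $w_i\in H^{2k-2i-1}$ for $0\le i\le k-1$, combined with the Sobolev multiplication estimate of Lemma \ref{alg} and the embeddings available in dimension $\le 3$; every product then lies in $L^2$ with norm bounded by $C_k(\norm{u_0}^2_{H^{2k-1}})$. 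Feeding these two bounds into Lemma \ref{eq-norm} and absorbing $\norm{w_{k-1}}^2_{L^2}\le C_k$ yields the first inequality.

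For the $H^3$ bound I differentiate the identity for $\De w_{k-1}$ once more in space and estimate $\norm{\n\De w_{k-1}}_{L^2}$. The spatial derivative of the leading term produces $\n w_k$, $w_k$ and $\n u_\ep$; since $\norm{\n u_\ep}_{L^\infty}\le C\norm{u_\ep}_{H^3}\le C_k$, this contributes $C_k\norm{w_k}^2_{H^1}$. The main work is to check that $\n R_{k-1}\in L^2$ with a bound depending only on $\norm{u_0}_{H^{2k-1}}$, and this bookkeeping is the principal obstacle: after one extra spatial derivative the worst factors are $\n w_j$ (of regularity $H^{2k-2j-2}$) paired against $w_{k-j}$ (of regularity $H^{2j-1}$), and one must verify for every admissible index---especially the extreme cases $j=1$ and $j=k-1$---that the derivative budget guaranteed by \eqref{pre-es} keeps the product in $L^2$ via Lemma \ref{alg}. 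Once this is established, Lemma \ref{eq-norm}, now using $\norm{w_{k-1}}_{H^3}=\norm{w_{k-1}}_{L^2}+\norm{\De w_{k-1}}_{H^1}$, gives the second inequality. I expect the index bookkeeping in this last step to be the only genuinely delicate point; the analytic inputs (the orthogonality, the $\ep$-uniform bound $\tfrac{1}{1+\ep^2}\le1$, and the multiplication lemma) are otherwise routine.
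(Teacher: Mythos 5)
Your proposal follows essentially the same route as the paper's proof: apply $\p_t^{k-1}$ to the rewritten equation to express $\De w_{k-1}$ as the leading term $\tfrac{1}{1+\ep^2}(\ep w_k-u_\ep\times w_k)$ plus lower-order products, bound the leading term uniformly in $\ep$ and the remainder via the induction hypothesis \eqref{pre-es} and Lemma \ref{alg}, and conclude with Lemma \ref{eq-norm} using the Neumann condition from Proposition \ref{comp-cond2}. The only caveat is in the $H^3$ step: the term $\n^2 w_{k-1}\#\n u_\ep\# u_\ep$ arising in $\n R_{k-1}$ is not controlled by $\norm{u_0}_{H^{2k-1}}$ alone but requires the just-proved $H^2$ bound of $w_{k-1}$ (hence carries a factor of $\int_\Om|w_k|^2dx$), which is exactly how the paper handles its term $M_4$ and is still compatible with the claimed inequality $\norm{w_{k-1}}^2_{H^3}\le C_k(\norm{w_k}^2_{H^1}+1)$.
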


\begin{proof}
Our proof is divided into two steps.

\medskip
\noindent\emph{Step 1: $H^2$-estimates of  $w_{k-1}$.} \

By using equation \eqref{pra-eq0}, we have
\[\De u_\ep=\frac{1}{1+\ep^2}(\ep w_1-u_\ep\times w_1)-|\n u_\ep|^2u_\ep.\]
A direct calculation shows
\begin{align*}
\De w_i=&\frac{1}{1+\ep^2}(\ep w_{i+1}-u_\ep\times w_{i+1}-w_i\times w_1)-2\n w_i\#\n u_\ep \# u_\ep-|\n u_\ep|^2w_i\\
&-\frac{1}{1+\ep^2}\sum_{l+s=i, 0\leq l,s<i}C^l_iw_l\times w_{s+1}-\sum_{l+s+m=i, 0\leq l,s,m<i}\n w_l\#\n w_s\# w_m,
\end{align*}
where $0\leq i\leq k-1$.  And hence, by taking $i=k-1$, it follows

\begin{equation}\label{es1}
\begin{aligned}
\int_{\Om}|\De w_{k-1}|^2dx\leq &2\int_{\Om}|w_{k}|^2dx+C\int_{\Om}|w_{k-1}|^2|w_1|^2dx+C\int_{\Om}|\n w_{k-1}|^2|\n u_\ep|^2dx\\
&+C\int_{\Om}|\n u_\ep|^4|w_{k-1}|^2dx+C\sum_{l+s=k-1, 0\leq l,s<k-1}\int_{\Om}|w_l|^2|w_{s+1}|^2dx\\
&+C\sum_{l+s+m=k-1,0\leq l,s,m<k-1}\int_{\Om}|\n w_l|^2|\n w_s|^2 |w_m|^2dx\\
=&2\int_{\Om}|w_{k}|^2dx+I_1+I_2+I_3+I_4+I_5.
\end{aligned}
\end{equation}

Next, we estimate the last five terms on the right hand side of the above inequality \eqref{es1} term by term.
\begin{align*}
|I_1|=&C\int_{\Om}|w_{k-1}|^2|w_1|^2dx\\
\leq&C\norm{w_{k-1}}^2_{L^2}\norm{w_1}^2_{H^2}\leq C(\norm{u_0}_{H^{2k-1}}(\Om)),
\end{align*}

\begin{align*}
|I_2|=&C\int_{\Om}|\n w_{k-1}|^2|\n u_\ep|^2dx\\
\leq &C\norm{\n w_{k-1}}^2_{L^2}\norm{\n u_\ep}^2_{H^2}\leq C(\norm{u_0}_{H^{2k-1}}(\Om)),
\end{align*}

\begin{align*}
|I_3|=&C\int_{\Om}|\n u_\ep|^4|w_{k-1}|^2dx\\
\leq &C\norm{w_{k-1}}^2_{L^2}\norm{\n u_\ep}^4_{H^2}\leq C(\norm{u_0}_{H^{2k-1}}(\Om)),
\end{align*}

\begin{align*}
|I_4|=&\sum_{l+s=k-1, 0\leq l,s<k-1}\int_{\Om}|w_l|^2|w_{s+1}|^2dx\\
\leq&\sum_{l+s=k-1, 0\leq l,s<k-1}\norm{w_l}^2_{H^1}\norm{w_{s+1}}^2_{H^1}\leq C(\norm{u_0}_{H^{2k-1}}(\Om)),
\end{align*}
and
\begin{align*}
|I_5|=&C\sum_{l+s+m=k-1, 0\leq l,s,m<k-1}\int_{\Om}|\n w_l|^2|\n w_s|^2 |w_m|^2dx\\
\leq &C\sum_{l+s+m=k-1, 0\leq l,s,m<k-1} \norm{w_l}^2_{H^2}\norm{w_s}^2_{H^2}\norm{w_m}^2_{H^1}\\
\leq &C(\norm{u_0}_{H^{2k-1}}(\Om)).
\end{align*}
Here we have used the estimates \eqref{pre-es} obtained by the assumption of induction.

Therefore, plugging the above inequalities $I_1$-$I_5$ into inequality \eqref{es1}, we get the estimate $(5.3)$ by applying Lemma \ref{eq-norm}, since $\frac{\p w_{k-1}}{\p \nu}|_{\p\Om\times [0,T_\ep)}=0$ which are implied by Proposition \ref{comp-cond2}.

\medskip
\noindent\emph{Step 2: $H^3$-estimates of $w_{k-1}$.}\

On the other hand, a simple calculation shows
\begin{equation}\label{nDew}
\begin{aligned}
\int_{\Om}|\n\De w_{k-1}|^2dx\leq &C\int_{\Om}|\n w_{k}|^2dx+C\int_{\Om}|w_{k}|^2|\n u_\ep|^2dx+C\int_{\Om}|\n w_{k-1}|^2|w_1|^2dx\\
	&+C\int_{\Om}|w_{k-1}|^2|\n w_1|^2dx+C\int_{\Om}|\n^2 w_{k-1}|^2|\n u_\ep|^2dx\\
	&+C\int_{\Om}|\n w_{k-1}|^2|\n^2 u_\ep|^2dx+C\int_{\Om}|\n u_\ep|^4|\n w_{k-1}|^2dx\\
	&+C\int_{\Om}|\n^2 u_\ep|^2|\n u_\ep|^2|w_{k-1}|^2dx\\
	&+C\sum_{l+s=k-1, 0\leq l,s<k-1}\int_{\Om}|\n(w_l\#w_{s+1})|^2dx\\
	&+C\sum_{l+s+m=k-1, 0\leq l,s,m<k-1}\int_{\Om}|\n(\n w_l\#\n w_s\#w_m)|^2dx\\
	=&C\int_{\Om}|\n w_{k}|^2dx+M_1+M_2+M_3\\
	&+M_4+M_5+M_6+M_7+M_8+M_9.
\end{aligned}	
\end{equation}
Here, by applying again the estimates \eqref{pre-es} we can show
\begin{align*}
M_1= &C\int_{\Om}|w_{k}|^2|\n u_\ep|^2dx\leq C\norm{u_{\ep}}^2_{H^3}\int_{\Om}|w_{k}|^2dx\\
\leq &C(\norm{u_0}_{H^{2k-1}}(\Om))\int_{\Om}|w_{k}|^2dx,\\
M_2= &C\int_{\Om}|\n w_{k-1}|^2|w_1|^2dx\leq \norm{w_{k-1}}^2_{H^1}\norm{w_1}^2_{H^2}\\
\leq &C(\norm{u_0}_{H^{2k-1}}(\Om)),
\end{align*}
\begin{align*}
M_3=& C\int_{\Om}|w_{k-1}|^2|\n w_1|^2dx\leq \norm{w_{k-1}}^2_{H^1}\norm{w_1}^2_{H^2}\\
&\leq C(\norm{u_0}_{H^{2k-1}}(\Om)),\\
M_4=&C\int_{\Om}|\n^2 w_{k-1}|^2|\n u_\ep|^2dx\leq C\norm{u_{\ep}}^2_{H^3}\int_{\Om}|\n^2 w_{k-1}|^2dx,\\
\leq &C(\norm{u_0}_{H^{2k-1}}(\Om))(1+\int_{\Om}|w_k|^2dx),\\
M_5=&C\int_{\Om}|\n w_{k-1}|^2|\n^2 u_\ep|^2dx\leq C\norm{u_{\ep}}^2_{H^4}\int_{\Om}|\n w_{k-1}|^2dx\\
\leq& C(\norm{u_0}_{H^{2k-1}}(\Om)),\\
M_6=&C\int_{\Om}|\n u_\ep|^4|\n w_{k-1}|^2dx\leq C\norm{u_{\ep}}^4_{H^3}\int_{\Om}|\n w_{k-1}|^2dx\\
&\leq C(\norm{u_0}_{H^{2k-1}}(\Om)),\\
M_7=&C\int_{\Om}|\n^2 u_\ep|^2|\n u_\ep|^2|w_{k-1}|^2dx\leq C\norm{u_{\ep}}^2_{H^3}\norm{u_{\ep}}^2_{H^4}\int_{\Om}|w_{k-1}|^2dx\\
\leq &C(\norm{u_0}_{H^{2k-1}}(\Om)),
\end{align*}
and
 \begin{align*}
M_8\leq&C\sum_{l+s=k-1, 0\leq l,s<k-1}\int_{\Om}(|\n w_l|^2|w_{s+1}|^2+|w_l|^2|\n w_{s+1}|^2)dx\\
\leq & C\sum_{l+s=k-1, 0\leq l,s<k-1}\norm{\n w_l}^2_{H^1}\norm{w_{s+1}}^2_{H^1}\\
&+C\sum_{l+s=k-1, 0\leq l,s<k-1}\norm{w_l}^2_{H^2}\norm{\n w_{s+1}}^2_{L^2}\\
\leq &C(\norm{u_0}_{H^{2k-1}}(\Om)),
\end{align*}
since $2(k-l-1)+1\geq 3$ and $2(k-(s+1)-1)+1\geq 1$ for $l,s<k-1$. Similarly, we can also show
\[M_9\leq C(\norm{u_0}_{H^{2k-1}}(\Om)).\]
Hence, by substituting the estimates on $M_1$-$M_9$ into the above inequality \eqref{nDew} we can obtain
\begin{align*}
\int_{\Om}|\n\De w_{k-1}|^2dx\leq &C(\norm{u_0}_{H^{2k-1}}(\Om))(\norm{w_k}^2_{H^1}+1).
\end{align*}
Therefore, we can use Lemma \ref{eq-norm} to get the desired result $(5.4)$.
\end{proof}

With Lemma \ref{w_{k-1}-w_k} at hand, it is not difficult to show the following estimate of the nonhomogeneous term $F_k$ in equation \eqref{w_k}.
\begin{lem}\label{es-error}
Under the assumption of induction (namely Proposition \ref{hig-es} holds for $k-1\geq 2$), there exists a constant $C_k$, which is independent of $\ep$, such that for a.e. $t\in[0,T_1]$
\[\int_{\Om}|F_k|^2dx\leq C_k(1+\int_{\Om}|w_k|^2dx).\]
\end{lem}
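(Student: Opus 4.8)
The plan is to estimate the two groups of terms making up $F_k$ separately in $L^2(\Om)$, using Hölder's inequality together with the three–dimensional Sobolev embeddings $H^2(\Om)\hookrightarrow L^\infty(\Om)$ and $H^1(\Om)\hookrightarrow L^6(\Om)$, and to keep careful track of the regularity supplied by the induction hypothesis \eqref{pre-es}, which gives $\norm{w_i}_{H^{2k-2i-1}}\leq C_k$ uniformly in $\ep$ for every $0\leq i\leq k-1$ (recall also $|u_\ep|=1$). By the definition in \eqref{w_k},
\[F_k=\ep\sum_{i+j+l=k,\,0\leq i,j,l<k}\n w_i\#\n w_j\#w_l+\sum_{i+j=k,\,0\leq i,j<k}C^i_kw_i\times\De w_j,\]
and the decisive feature is that every index appearing is strictly smaller than $k$, so that each factor is covered by \eqref{pre-es}.

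For the first sum I would use $\ep\leq 1$ and bound each triple product in $L^2$ by a constant. Since the three indices sum to $k$ and are each at most $k-1$, at most one of them can be large. If the largest index equals $k-1$, then the other two indices sum to $1$, hence equal $0$ or $1$, so the factors they label lie in $H^{2k-4}(\Om)$ or better and thus in $L^\infty(\Om)$ for $k\geq 3$; placing the factor carrying the largest index in $L^2(\Om)$ and the other two in $L^\infty(\Om)$ closes the estimate. If instead the largest index is at most $k-2$, every factor already lies in $H^2(\Om)\hookrightarrow L^\infty(\Om)$. In either case the term is bounded by $C_k$, independently of $\ep$.

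For the second sum I would split according to the size of $j$. When $j\leq k-2$, the factor $\De w_j$ lies in $H^{2k-2j-3}(\Om)\hookrightarrow L^6(\Om)$ (as $2k-2j-3\geq 1$), while $w_i\in H^{2k-2i-1}(\Om)\hookrightarrow L^6(\Om)\subset L^3(\Om)$; Hölder with exponents $(3/2,3)$ then bounds $\int_\Om|w_i|^2|\De w_j|^2\,dx$ by $C_k$. The only remaining term is the one with $i=1$, $j=k-1$, namely $w_1\times\De w_{k-1}$, and this is where the estimate is genuinely delicate.

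The hard part will be precisely this term, because \eqref{pre-es} places $w_{k-1}$ only in $H^1(\Om)$, which is one derivative short of controlling $\De w_{k-1}$ in $L^2(\Om)$. This gap is bridged by Lemma \ref{w_{k-1}-w_k}, whose first inequality upgrades the $H^2$ norm of $w_{k-1}$ at the admissible cost of $\int_\Om|w_k|^2\,dx$: since $\norm{w_1}_{L^\infty}\leq C\norm{w_1}_{H^2}\leq C_k$ (using $k\geq 3$) and $\norm{\De w_{k-1}}_{L^2}^2\leq C\norm{w_{k-1}}_{H^2}^2\leq C_k+C\int_\Om|w_k|^2\,dx$, we obtain $\int_\Om|w_1\times\De w_{k-1}|^2\,dx\leq C_k\bigl(1+\int_\Om|w_k|^2\,dx\bigr)$. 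Combining this with the constant bounds from all the other terms yields $\int_\Om|F_k|^2\,dx\leq C_k\bigl(1+\int_\Om|w_k|^2\,dx\bigr)$, as claimed.
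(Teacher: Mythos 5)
Your proposal is correct and follows essentially the same route as the paper: all terms of $F_k$ with indices below $k-1$ (and the gradient terms involving $w_{k-1}$) are bounded by constants via the induction hypothesis \eqref{pre-es} and Sobolev embeddings, and the single delicate term $w_1\times\De w_{k-1}$ is handled exactly as in the paper by invoking the first inequality of Lemma \ref{w_{k-1}-w_k} to trade $\norm{w_{k-1}}^2_{H^2}$ for $C_k+\int_\Om|w_k|^2\,dx$. The only difference is organizational (the paper writes out the terms $I_1^*,\dots,I_5^*,\tilde F_k$ explicitly and uses Lemma \ref{alg} for the lower-order products, while you argue by the size of the largest index), which does not change the substance.
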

\begin{proof}
A direct computation shows
\begin{align*}
F_k(u_\ep)=&\ep\sum_{i+j+l=k,\,0\leq i,j,l<k}\n w_i\#\n w_j\#w_l+\sum_{i+j=k,0\leq i,j<k} C^i_kw_i\times \De w_j\\
=&\ep\n w_{k-1}\#\n w_1\#u_\ep+\ep\n w_{k-1}\#\n u_\ep\#w_1+\ep\n w_1\#\n u_\ep\#w_{k-1}\\
&+kw_{k-1}\times \De w_1+kw_1\times \De w_{k-1}\\
&+\ep\sum_{i+j+l=k,\,0\leq i,j,l<k-1}\n w_i\#\n w_j\#w_l+\sum_{i+j=k,0\leq i,j<k-1} C^i_kw_i\times \De w_j\\
=&\ep\n w_{k-1}\#\n w_1\#u_\ep+\ep\n w_{k-1}\#\n u_\ep\#w_1+\ep\n w_1\#\n u_\ep\#w_{k-1}\\
&+kw_{k-1}\times \De w_1+kw_1\times \De w_{k-1}+\tilde{F}_k(u_\ep)\\
=&I_1^* + I_2^* + I_3^* + I_4^* + I_5^* + \tilde{F}_k(u_\ep).
\end{align*}
Here, for the sake of simplicity we denote
\[\tilde{F}_k(u_\ep)=\ep\sum_{i+j+l=k,\,0\leq i,j,l<k-1}\n w_i\#\n w_j\#w_l+\sum_{i+j=k,0\leq i,j<k-1} C^i_kw_i\times \De w_j.\]

Now, by using the estimates \eqref{pre-es} (the assumption of induction) and H\"older inequality, we can estimate the six terms on the right hand side of the above identity as follows.
\begin{align*}
\int_{\Om}|I_1^*|^2dx\leq &C\ep\norm{w_{k-1}}^2_{H^1}\norm{w_1}^2_{H^3}\leq C(\norm{u_0}_{H^{2k-1}}),\\
\int_{\Om}|I_2^*|^2dx\leq &C\ep\norm{w_{k-1}}^2_{H^1}\norm{u_\ep}^2_{H^3}\norm{w_1}^2_{H^2}\leq C(\norm{u_0}_{H^{2k-1}}),\\
\int_{\Om}|I_3^*|^2dx\leq &C\ep\norm{w_{k-1}}^2_{H^1}\norm{u_\ep}^2_{H^3}\norm{w_1}^2_{H^2}\leq C(\norm{u_0}_{H^{2k-1}}),\\
\int_{\Om}|I_4^*|^2dx\leq &C\norm{w_{k-1}}^2_{H^1}\norm{w_1}^2_{H^3}\leq C(\norm{u_0}_{H^{2k-1}}),\\
\int_{\Om}|I_5^*|^2dx\leq &C\norm{w_{k-1}}^2_{H^2}\norm{w_1}^2_{H^2}\leq C(\norm{u_0}_{H^{2k-1}})(1+\int_{\Om}|w_k|^2dx).
\end{align*}

It remains to estimate the $L^2$-norm of $\tilde{F}(u_\ep)$. For $i,j,l<k-1$, by using Lemma \ref{alg}, we have
\[\norm{\n w_i\#\n w_j\# w_l}_{H^{2}(\Om)}\leq C(\norm{u_0}_{H^{2k-1}}(\Om)).\]
On the other hand, we have
\[\norm{\De w_j}_{H^{1}(\Om)}\leq C\norm{w_j}_{H^3}\leq C(\norm{u_0}_{H^{2k-1}}(\Om))\]
for $j<k-1$. Then, using again Lemma \ref{alg} leads to
\[\norm{w_i\#\De w_j}_{H^{1}(\Om)}\leq C(\norm{u_0}_{H^{2k-1}}(\Om))\]
for any $i,j<k-1$. Namely, there holds
	\[\norm{\tilde{F}_k(u_\ep)}_{H^{1}(\Om)}\leq C(\norm{u_0}_{H^{2k-1}}(\Om)).\]
Therefore, we can easily get the desired estimates from the estimates on $I_1^*$-$I_5^*$ and $\tilde{F}(u_\ep)$.
\end{proof}

\subsection{Uniform $L^2$-estimate of $w_k$.}\

Now, we intend to show a uniform $L^2$-estimate of $w_k$ by direct energy estimates. By taking $w_k$ as a test function of \eqref{w_k}, we have
\begin{equation}\label{L^2-w_k}
\begin{aligned}
\frac{1}{2}\frac{\p}{\p t}\int_{\Om}|w_k|^2dx+\ep\int_{\Om}|\De w_k|^2dx=&\int_{\Om}\<u_\ep\times \De w_k, w_k\>dx+\int_{\Om}\<K_k(\n w_k), w_k\>dx\\
&+\int_{\Om}\<L_k(w_k), w_k\>dx+\int_{\Om}\<F_k(u_\ep), w_k\>dx\\
=& J_1 + J_2 + J_3 + J_4.
\end{aligned}	
\end{equation}
Now we give the estimates of the four terms on the right hand side of the above inequality \ref{L^2-w_k} respectively as follows.
\begin{align*}
|J_1|\leq & \left|\int_{\Om}\<u_\ep\times \De w_k, w_k\>dx\right|\leq C\int_{\Om}|\n u_\ep||\n w_k||w_k|dx,\\
\leq &C\norm{u_{\ep}}_{H^3}\int_{\Om}|\n w_k|^2+|w_k|^2dx,\\
|J_2|\leq& 2\ep\left|\int_{\Om}\<\n w_k\cdot \n u_\ep u_\ep, w_k\>dx\right|\\
\leq &C\ep \norm{u_\ep}^2_{H^3}\int_{\Om}|w_k|^2dx+\frac{\ep}{2}\int_{\Om}|\n w_k|^2dx,\\
|J_3|=&\left|\int_{\Om}\<L_k(w_k), w_k\>dx\right|\leq \ep \int_{\Om}|\n u_\ep|^2|w_k|^2dx\\
\leq &C\ep \norm{u_\ep}^2_{H^3}\int_{\Om}|\n w_k|^2dx,\\
|J_4|\leq& C\int_{\Om}|F_k(u_\ep)||w_k|dx\\
\leq &C\int_{\Om}|F_k(u_\ep)|^2dx+C\int_{\Om}|w_k|^2dx\\
\leq &C(\norm{u_0}_{H^{2k-1}}(\Om))+C\int_{\Om}|w_k|^2dx.
\end{align*}
Therefore, by substituting the above estimates $J_1$-$J_4$ into the inequality \eqref{L^2-w_k}, we have
\begin{align}
\frac{\p}{\p t}\int_{\Om}|w_k|^2dx+\ep\int_{\Om}|\De w_k|^2dx\leq C(\norm{u_0}_{H^{2k-1}}(\Om))\(1+\int_{\Om}(|w_k|^2+|\n w_k|^2)dx\),
\end{align}
where the constant $C$ does not depend on $\ep$.

\medskip
\subsection{Uniform $H^1$-estimate of $w_k$.}\
To get a uniform bound of $H^1$-norm of $w_k$ with respect to $\ep$, we should enhance the regularity of $w_k$ to guarantee that integration by parts makes sense during the process of energy estimates. By Theorem \ref{para-V-sol}, we know
$$w_k\in L^\infty([0,T], H^{1}(\Om))\cap L^2([0,T], H^{2}(\Om)),$$
which satisfies the following equation
\begin{equation}
\begin{cases}
\p_t w_k =\ep \De w_k+u_\ep\times \De w_k+f_k,\\[1ex]
\frac{\p w_k}{\p \nu}|_{\p\Om\times [0,T_\ep)}=0, \\[1ex]
w(x,0)=V_k,
\end{cases}
\end{equation}
where
\[f_k=K_k(\n w_k)+L_k(w_k)+F_k(u_\ep).\]

Since we have shown
\[F_{k}(u_\ep)\in L^2([0,T], H^2(\Om))\]
in Proposition 4.4 of \cite{CW}, it is not difficult to get
\[f_k\in L^2([0,T], H^1(\Om)),\]
for any $0<T<T_\ep$.

Hence, the classical $L^2$-estimates of parabolic equation (also see Theorem A.1 in \cite{CW1}) tells us that
\[w_k\in L^2_{loc}((0,T], H^{3}(\Om))\]
and
\[\frac{\p w_k}{\p t}\in L^2_{loc}((0,T], H^{1}(\Om)),\]
which guarantee integration by parts in the following process of energy estimates makes sense. By choosing $\De w_k$ as a test function of \eqref{w_k}, we have
\begin{equation}\label{H^1}
\begin{aligned}
\frac{1}{2}\frac{\p}{\p t}\int_{\Om}|\n w_k|^2dx+\ep\int_{\Om}|\De w_k|^2dx=&-\int_{\Om}\<K_k(\n w_k), \De w_k\>dx-\int_{\Om}\<L_k(w_k), \De w_k\>dx
\\&-\int_{\Om}\<F_k(u_\ep), \De w_k\>dx\\
=& J_1^*+ J_2^* + J_3^* + J_4^*.
\end{aligned}
\end{equation}
Now, we estimate the four terms on the right hand side of the above inequality \ref{H^1} term by term. Firstly, we have
\begin{align*}
|J_1^*|=&\left|\int_{\Om}\<K_k(\n w_k), \De w_k\>dx\right|\\
\leq&2\ep\left|\int_{\Om}\<\n w_k\cdot \n u_\ep u_\ep, \De w_k\>dx\right|\\
\leq& C\ep\norm{u_\ep}^2_{H^3}\int_{\Om}|\n w_k|^2dx+\frac{\ep}{8}\int_{\Om}|\De w_k|^2dx,\\
|J_2^*|=&\left|\int_{\Om}\<L_k(w_k), \De w_k\>dx\right|\\
\leq&\ep\left|\int_{\Om}\<|\n u_\ep|^2 w_k, \De w_k\>dx\right| + \left|\int_{\Om}\<w_k\times \n \De u_\ep, \n w_k\>dx\right|\\
\leq&C\ep\norm{u_\ep}^4_{H^3}\int_{\Om}|w_k|^2dx+\frac{\ep}{8}\int_{\Om}|\De w_k|^2dx\\
&+\norm{u_\ep}^2_{H^5}\int_{\Om}|w_k|^2dx+C\int_{\Om}|\n w_k|^2dx.
\end{align*}
Applying similar arguments as that in the proof of Lemma \ref{es-error} leads to
\begin{align*}
|J_3^*|=&\ep \left|\int_{\Om}\<\sum_{i+j+l=k,\,0\leq i,j,l<k}\n w_i\#\n w_j\#w_l, \De w_k\>dx\right|\\
\leq& C\ep\sum_{i+j+l=k,\,0\leq i,j,l<k}\int_{\Om}|\n w_i|^2|\n w_j|^2|w_l|^2dx+\frac{\ep}{8}\int_{\Om}|\De w_k|^2dx\\
\leq& \ep C(\norm{u_0}_{H^{2k-1}}(\Om))+\frac{\ep}{8}\int_{\Om}|\De w_k|^2dx.
\end{align*}
 For the last term $J^*_4$, we have
\begin{align*}
|J_4^*|=&C\left|\int_{\Om}\sum_{i+j=k,0\leq i,j<k} \<w_i\times \De w_j, \De w_k\>dx\right|\\
\leq& C\sum_{i+j=k,0\leq i,j<k}|\int_{\Om}\<\n w_i\times \De w_j, \n w_k\>dx|\\
&+C\sum_{i+j=k,0\leq i,j<k}|\int_{\Om}\<w_i\times \n \De w_j, \n w_k\>dx|\\
=&C(a^*+b^*).
\end{align*}
Here,
\begin{align*}
a^*=&\left|\int_{\Om}\<\n w_{k-1}\times \De w_1, \n w_k\>dx\right| + \left|\int_{\Om}\<\n w_1\times \De w_{k-1}, \n w_k\>dx\right|\\
&+\sum_{i+j=k,0\leq i,j<k-1}\left|\int_{\Om}\<\n w_i\times \De w_j, \n w_k\>dx\right|\\
\leq&\norm{\n w_{k-1}}_{L^6}\norm{\De w_1}_{L^3}\norm{\n w_k}_{L^2}+\norm{\n w_1}_{L^\infty}\norm{\De w_{k-1}}_{L^2}\norm{\n w_{k}}_{L^2}\\
&+\sum_{i+j=k,0\leq i,j<k-1}\norm{\n w_i}_{L^6}\norm{\De w_j}_{L^3}\norm{\n w_{k}}_{L^2}\\
\leq& C\norm{w_1}^2_{H^3}\norm{w_{k-1}}^2_{H^2}+C\norm{\n w_k}^2_{L^2}+C\sum_{i+j=k,0\leq i,j<k-1}\norm{w_i}^2_{H^2}\norm{w_j}^2_{H^3}\\
\leq& C_k(1+\norm{w_k}^2_{H^1}),
\end{align*}
and
\begin{align*}
b^*=&\left|\int_{\Om}\<w_{k-1}\times \n \De w_1, \n w_k\>dx\right| + \left|\int_{\Om}\<w_1\times \n \De w_{k-1}, \n w_k\>dx\right|\\
&+\sum_{i+j=k,0\leq i,j<k-1}\left|\int_{\Om}\<w_i\times \n \De w_j, \n w_k\>dx\right|\\
\leq& \norm{w_{k-1}}_{L^\infty}\norm{\n \De w_1}_{L^2}\norm{\n w_k}_{L^2}+\norm{w_1}_{L^\infty}\norm{\n \De w_{k-1}}_{L^2}\norm{\n w_k}_{L^2}\\
&+\sum_{i+j=k,0\leq i,j<k-1}\norm{w_i}_{L^\infty}\norm{\n \De w_j}_{L^2}\norm{\n w_{k}}_{L^2}\\
\leq&\norm{w_{k-1}}^2_{H^2}\norm{w_1}^2_{H^3}+C\norm{\n w_k}^2_{L^2}+C\norm{w_1}^2_{H^2}\norm{\n \De w_{k-1}}^2_{L^2}\\
&+C\sum_{i+j=k,0\leq i,j<k-1}\norm{w_i}^2_{H^2}\norm{w_j}^2_{H^3}\leq C_k(1+\norm{w_k}^2_{H^1}),
\end{align*}
where we have used Lemma \ref{w_{k-1}-w_k} and the estimates \eqref{pre-es} from the assumption of induction. Hence, it follows that
\[\sup_{0<t\leq T_1}\norm{w_i}_{H^3}\leq C_k.\]
since $2(k-i-1)+1\geq 3$ for $0\leq i<k-1$.

Therefore, by combining the above estimates with formula \eqref{H^1}, we have
\begin{equation}\label{H^1-w_k}
\frac{\p}{\p t}\int_{\Om}|\n w_k|^2dx+\ep\int_{\Om}|\De w_k|^2dx\leq C(\norm{u_0}_{H^{2k-1}(\Om)})(1+\norm{w_k}^2_{H^1})
\end{equation}
where $C$ does not depend on $\ep\in(0,\,1)$.

To end this section, under the assumption of induction (i.e. the estimates \eqref{pre-es} hold), we combine the inequalities \eqref{L^2-w_k} with \eqref{H^1-w_k} to show the conclusions of Proposition \ref{hig-es} are also true in the case of $k\geq 3$.

\begin{prop}\label{H-es}
Let $u_\ep$ and $T_\ep>0$ be the same as those given in Theorem \ref{para-H^5} and $0<T_1<T_\ep$ be the positive time obtained in Proposition \ref{H^5-es}. Assume that $u_\ep$ satisfies the estimates \eqref{pre-es} for any $0\leq i\leq k-1$, where $k\geq 3$. If $u_0\in H^{2k+1}(\Om,\U^2)$ and satisfies the $(k-1)$-order compatibility condition defined in \eqref{com-cond}, then there exists a constant $C_k$ independent of $\ep$ such that for any $0\leq i\leq k$, there holds true
\[\sup_{0<t\leq T_1}\norm{\p^i_t u_\ep}_{H^{2(k-i)+1}}\leq C_k(\norm{u_0}_{H^{2k+1}(\Om)}).\]
\end{prop}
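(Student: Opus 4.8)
The plan is to run, for $w_k=\p^k_tu_\ep$, the same scheme that closed the case $k=2$ in Proposition \ref{H^5-es}, and finish with the ODE comparison of Corollary \ref{ode}. The two energy inequalities already established for $w_k$ — the $L^2$-estimate and the $H^1$-estimate \eqref{H^1-w_k} — both have right-hand sides that are \emph{affine} in $\norm{w_k}^2_{H^1}$, because every genuinely nonlinear interaction has been absorbed into the lower-order quantities $w_0,\dots,w_{k-1},u_\ep$, which are bounded via the induction hypothesis \eqref{pre-es} by constants depending only on $\norm{u_0}_{H^{2k-1}}$ (for instance $\norm{u_\ep}_{H^5}\leq\norm{u_\ep}_{H^{2k-1}}\leq C$ when $k\geq3$). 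Setting $y(t)=\norm{w_k}^2_{H^1}=\int_{\Om}(|w_k|^2+|\n w_k|^2)\,dx$ and adding the two inequalities, the nonnegative terms $\ep\int_{\Om}|\De w_k|^2dx$ can be discarded, giving
\[y'(t)\leq C_k\bigl(1+y(t)\bigr),\qquad C_k\ \text{independent of}\ \ep.\]
Since Theorem \ref{para-V-sol} supplies $w_k\in L^\infty([0,T_1],H^1)\cap L^2([0,T_1],H^2)$ with $\p_t w_k\in L^2([0,T_1],L^2)$, Lemma \ref{C^0-em} yields $w_k\in C^0([0,T_1],H^1)$, so $y$ is continuous.

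Next I would control the initial datum. As $y(0)=\norm{V_k(u_0)}^2_{H^1}$ and the closed formula \eqref{eq-V_k} writes $V_k$ as a universal polynomial expression in $u_0$ and its derivatives up to order $2k$, the algebra Lemma \ref{alg} gives $\norm{V_k}_{H^1}\leq C(\norm{u_0}_{H^{2k+1}})$; this is the single place where the full $H^{2k+1}$-regularity of $u_0$ enters. Comparing $y$ with the maximal solution of $z'=C_k(1+z)$, $z(0)=y(0)$ — which is global since the right-hand side is linear — Corollary \ref{ode} produces
\[\sup_{0<t\leq T_1}\norm{w_k}^2_{H^1}\leq C_k(\norm{u_0}_{H^{2k+1}}).\]
This is the case $i=k$, and Lemma \ref{w_{k-1}-w_k} upgrades it immediately to $\sup_{0<t\leq T_1}\norm{w_{k-1}}_{H^3}\leq C_k$, i.e.\ the case $i=k-1$.

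The remaining cases $0\leq i\leq k-2$ I would obtain by elliptic bootstrapping, mirroring the passage $\norm{u_\ep}_{H^3}\to\norm{u_\ep}_{H^4}\to\norm{u_\ep}_{H^5}$ at the end of Proposition \ref{H^5-es}. Rewriting \eqref{pra-eq0} furnishes, for each $i$, an elliptic identity of the schematic form
\[\De w_i=\tfrac{1}{1+\ep^2}\bigl(\ep w_{i+1}-u_\ep\times w_{i+1}-w_i\times w_1\bigr)-2\n w_i\#\n u_\ep\#u_\ep-|\n u_\ep|^2w_i-(\text{lower order}),\]
whose right-hand side is controlled in $H^{2(k-i)-1}$ once $w_{i+1}$ is known at its target level $H^{2(k-i)-1}$ and the remaining lower-index (hence more regular) factors are bounded by \eqref{pre-es} and Lemma \ref{alg}; the equivalent-norm Lemma \ref{eq-norm}, applicable because $\frac{\p w_i}{\p\nu}|_{\p\Om}=0$ by Proposition \ref{comp-cond2}, then returns $w_i\in H^{2(k-i)+1}$. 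Carrying this out by downward induction on $i$ from $i=k-1$ to $i=0$ completes the proof, all constants being $\ep$-independent since $\tfrac{1}{1+\ep^2}$ is uniformly bounded.

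The step I expect to be most delicate is precisely this bootstrapping. The terms carrying $\n w_i$, such as $\n w_i\#\n u_\ep\#u_\ep$, lose one derivative, so a single downward pass only improves $w_i$ from $H^{2(k-i)-1}$ to $H^{2(k-i)}$; exactly as in the $H^3\to H^4\to H^5$ argument for $k=2$, a second elliptic pass — in which $\n w_i$ is now one derivative smoother — recovers the missing derivative and reaches $H^{2(k-i)+1}$. Care is needed at every stage to check that the Sobolev indices verify the hypotheses of Lemma \ref{alg} (one factor of order $\geq2$) and that no factor is ever demanded at a regularity exceeding what \eqref{pre-es} or the already-improved bounds provide.
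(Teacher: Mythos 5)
Your proposal is correct and follows essentially the same route as the paper: sum the $L^2$- and $H^1$-inequalities for $w_k$, drop the $\ep\int|\De w_k|^2$ terms, apply Gronwall/ODE comparison with $y(0)=\norm{V_k(u_0)}^2_{H^1}\leq C(\norm{u_0}_{H^{2k+1}})$, and then recover the cases $i<k$ by a downward two-pass elliptic bootstrap through the identity for $\De w_i$ together with Lemma \ref{eq-norm}, Lemma \ref{alg}, and Proposition \ref{comp-cond2}. The only cosmetic differences are that you invoke Corollary \ref{ode} where the paper simply cites Gronwall, and you dispatch $i=k-1$ via Lemma \ref{w_{k-1}-w_k} before bootstrapping, whereas the paper folds it into the same downward induction and treats $i=0$ as a separate final step; you also correctly identified the need for the second elliptic pass caused by the derivative loss in the $\n w_i\#\n u_\ep\#u_\ep$ terms, which is exactly how the paper's Step 2 proceeds.
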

\begin{proof}
Our proof is divided into three steps.

\medskip
\noindent\emph{Step 1: Estimates of $w_k$.} \

By combining inequalities \eqref{L^2-w_k} with \eqref{H^1-w_k}, we can show
\[\frac{\p}{\p t}\int_{\Om}(|w_k|^2+|\n w_k|^2)dx\leq C_k(1+\int_{\Om}(|w_k|^2+|\n w_k|^2)dx),\]
for $0\leq t\leq T_1$.
Then, Gronwall inequality implies
\[\sup_{0<t\leq T_1}\int_{\Om}(|w_k|^2+|\n w_k|^2)dx\leq e^{C_kT_1}(\norm{V_k(u_0)}^2_{H^1}+1).\]
It is not difficult to verify
\[\norm{V_k(u_0)}^2_{H^1}\leq C(\norm{u_0}_{H^{2k+1}}).\]

\noindent\emph{Step 2: Estimates of $w_i$ for $1\leq i<k$.} \

We show the estimates of $w_i$ with $0\leq i\leq k$ by applying the method of induction on $n=k-i$. Since the desired estimates of $w_k$ have been obtained in above, that is the case of $n=0$, we assume that the result has been established for $n\leq j$ where $j\leq k-2$. Then, in the case that $n=j+1$, a simple calculation shows
\begin{align*}
\De w_{k-j-1}=&\frac{1}{1+\ep^2}(\ep w_{k-j}-u_\ep\times w_{k-j}-w_{k-j-1}\times w_1)\\
&-2\n w_{k-j-1}\#\n u_\ep\# u_\ep-|\n u_\ep|^2w_{k-j-1}\\
&-\frac{1}{1+\ep^2}\sum_{l+s=k-j-1, 0\leq l,s<k-j-1}w_l\times w_{s+1}\\
&-\sum_{l+s+m=k-j-1, 0\leq l,s,m<k-j-1}\n w_l\#\n w_s\# w_m\\
&=K_1 + K_2 + K_3.
\end{align*}
Here,
\begin{align*}
	K_1=&\frac{1}{1+\ep^2}\(\ep w_{k-j}-u_\ep\times w_{k-j}-w_{k-j-1}\times w_1\)\\
	&-2\n w_{k-j-1}\#\n u_\ep\# u_\ep-|\n u_\ep|^2w_{k-j-1}.
\end{align*}

Next we estimate the three terms $K_1$, $K_2$ and $K_3$ in the above respectively.
\begin{itemize}
	\item[$(1)$]  For the term $K_1$,
since $2(k-(k-j-1)-1)+1=2j+1$ with $0\leq j\leq k-2$ and $k\geq 3$, by using the estimates \eqref{pre-es}, we have
	\[\sup_{0<t\leq T_1}(\norm{w_{k-j-1}}_{H^{2j+1}}+\norm{u_\ep}_{H^{2k-1}}+\norm{w_{1}}_{H^{2(k-2)+1}})\leq C_k.\]
On the other hand, by using the assumption of induction, we know that there holds true
	\[\sup_{0<t\leq T_1}\norm{w_{k-j}}_{H^{2j+1}}\leq  C_k.\]
	Therefore, Lemma \ref{alg} implies
	\[\sup_{0<t\leq T_1}\norm{K_1}_{H^{2j}}\leq C_k.\]
	\item[$(2)$] For the term $$K_2=\frac{1}{1+\ep^2}\sum_{l+s=k-j-1, 0\leq l,s<k-j-1}w_l\times w_{s+1},$$
since there holds
	\[\sup_{0<t\leq T_1}(\norm{w_l}_{H^{2(j+1)+1}}+\norm{w_{s+1}}_{H^{2j+1}})\leq C_k\]
   for $s,l\leq k-j-2$, Lemma \ref{alg} tells us that
   \[\sup_{0<t\leq T_1}\norm{K_2}_{H^{2j+1}}\leq C_k.\]

   \item[$(3)$] For the third term $$K_3=\sum_{l+s+m=k-j-1, 0\leq l,s,m<k-j-1}\n w_l\#\n w_s\# w_m,$$
  by a similar argument with that for the term $K_2$, we can use Lemma \ref{alg} again to show
   \[\sup_{0<t\leq T_1}\norm{K_3}_{H^{2(j+1)}}\leq C_k.\]	
\end{itemize}

Therefore, we has obtained
\[\sup_{0<t\leq T_1}\norm{\De w_{k-j-1}}_{H^{2j}}\leq C_k.\]
It follows the above estimates, the classical $L^2$-estimates and Lemma \ref{eq-norm}
\[\sup_{0<t\leq T_1}\norm{w_{k-j-1}}_{H^{2(j+1)}}\leq C_k.\]

Once the regularity of $w_{k-j-1}$ is improved, there is an improved bound of $K_1$:
\[\sup_{0<t\leq T_1}\norm{K_1}_{H^{2j+1}}\leq C_k.\]
And hence, it follows
\[\sup_{0<t\leq T_1}\norm{\De w_{k-j-1}}_{H^{2j+1}}\leq C_k.\]
Again the classical $L^2$-estimates  and Lemma \ref{eq-norm} implies
\[\sup_{0<t\leq T_1}\norm{w_{k-j-1}}_{H^{2(j+1)+1}}\leq C_k.\]

\medskip
\noindent\emph{Step 3: Estimates of $u_\ep$.} \

In the above step $2$, we have gotten a bound of $w_1$ stated as follows
 \[\sup_{0<t\leq T_1}\norm{w_1}_{H^{2k-1}}\leq C_k.\]
On the other hand, we have
 \[\De u_\ep=\frac{1}{1+\ep^2}(\ep w_1-u_\ep\times w_1)-|\n u_\ep|^2u_\ep.\]
Since $\norm{u_\ep}_{H^{2k-1}}\leq C_k$, by the above equation and Lemma \ref{alg}, it is not difficult to verify the following
 \[\sup_{0<t\leq T_1}\norm{\De u_\ep}_{H^{2k-2}}\leq C_k.\]
Immediately, it follows from the classical $L^2$-estimates theory that
 \[\sup_{0<t\leq T_1}\norm{u_\ep}_{H^{2k}}\leq C_k.\]
Hence, by using $L^2$-theory again we can improve the estimate of $\De u_\ep$ to achieve
 \[\sup_{0<t\leq T_1}\norm{\De u_\ep}_{H^{2k-1}}\leq C_k.\]
This leads to
 \[\sup_{0<t\leq T_1}\norm{u_\ep}_{H^{2k+1}}\leq C_k.\]
\end{proof}

\subsection{The proof of Theorem \ref{V-sol}.}\
In this subsection, we prove Theorem \ref{V-sol}.

\begin{proof}[The proof of Theorem \ref{V-sol}]
Suppose that $u_0\in H^{2k+1}(\Om,\U^2)$ with $k\geq 2$, which satisfies the $(k-1)$-order compatibility condition defined in \eqref{com-cond1}. For any $0\leq i\leq k$, Proposition \ref{hig-es} tells us that the following uniform estimates of $u_\ep$ hold
\[\sup_{0<t\leq T_1}\norm{\p^i_t u_\ep}_{H^{2(k-i)+1}}\leq C_k.\]
Hence, an argument on convergence shows that there exists a limiting map $u\in L^\infty([0,T], H^{2k+1}(\Om))$ solving \eqref{S-eq}. Moreover, the lower semicontinuity of weak convergence implies that $u$ also satisfies
\begin{align}
\sup_{0<t\leq T_1}\norm{\p^i_t u}_{H^{2(k-i)+1}}\leq C_k\label{es-u-k}	
\end{align}
for any $0\leq i\leq k$.

Additionally, if $u_0\in C^\infty(\bar{\Om})$, which satisfies the $k$-order compatibility conditions defined by \eqref{s-com-cond} for any $k\geq 0$, the above estimates \eqref{es-u-k} yield that
\[\sup_{0<t<T_1}\norm{\p^j_t\p^s_xu}^2_{L^2}<\infty\]
for any $j,s\in \mathbb{N}$. So, it follows from the Sobolev embedding theorem that
\[u\in C^\infty(\bar{\Om}\times[0,T_1]),\] 
 
Therefore, the proof is completed.

\end{proof}

\section{Global existence of smooth solutions to 1-dimensional Schr\"odinger flow}\label{s: global-exist}
In this section, we are concerned with the global existence of regular solutions to the following initial-Neumann boundary value problem of the 1-dimensional Schr\"odinger flow
\begin{equation}\label{eq-ISMF}
\begin{cases}
\p_tu=u\times \p^2_xu,\quad\quad&\text{(x,t)}\in(0,1)\times \Real^+,\\[1ex]
\p_xu(0,t)=0,\,\p_xu(1,t)=0, &t\in\Real^+,\\[1ex]
u(x,0)=u_0: \Om\to \U^2,
\end{cases}
\end{equation}
where $u$ is a time-dependent map from $(0,1)$ into a standard sphere $\U^2$. For simplicity, we set $I=[0,1]$.

Recall that Theorem \ref{thm2} implies the following result about the local existence of smooth solution to \eqref{eq-ISMF}.
\begin{thm}\label{thm4}
Suppose that $u_0\in C^\infty (I,\U^2)$, which satisfies the $k$-order compatibility condition defined in \eqref{com-cond3} for any $k\in \mathbb{N}$. Then there exists a positive maximal time $T_{max}$ depending only on $\norm{u_0}_{H^5(I)}$ such that the initial-Neumann boundary value problem \eqref{eq-ISMF} admits a unique local smooth solution $u$ on $[0,T_{max})$.
\end{thm}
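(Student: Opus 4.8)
The plan is to read Theorem~\ref{thm4} off from the one-dimensional local theory already established, and then to promote the resulting local smooth solution to a maximally extended one by a standard continuation argument. The decisive input is that the existence time furnished by Theorem~\ref{thm2'} (the $\dim\Om=1$ instance of Theorems~\ref{thm2} and \ref{V-sol}) depends only on $\norm{u_0}_{H^5(I)}$ and is independent of the order $k$ of regularity being propagated; this uniformity is precisely what converts ``regular of every finite order on one common interval'' into genuine $C^\infty$ regularity.

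First I would invoke Theorem~\ref{thm2'} directly. Since $u_0\in C^\infty(I,\U^2)$ satisfies $\widetilde{CC}(k)$ for every $k\geq 0$, its concluding clause furnishes a local solution $u\in C^\infty(I\times[0,T_1])$ of \eqref{eq-ISMF}, where $T_1=T_1(\norm{u_0}_{H^5(I)})>0$; uniqueness in the regular class is guaranteed by \cite{CW1}. Should one wish to assemble the smoothness by hand, it follows from the bounds $\p^i_t u\in L^\infty([0,T_1],H^{2k+1-2i}(I))$, which hold for every $k$ on the same interval $[0,T_1]$: for fixed $i,j$ one chooses $k$ large so that $\p^i_t u\in L^\infty([0,T_1],H^{j+1}(I))$ and applies the one-dimensional embedding $H^{j+1}(I)\hookrightarrow C^{j}(\bar I)$ to control $\p^i_t\p^j_x u$, while the bound on $\p^{i+1}_t u$ gives Lipschitz dependence of $t\mapsto\p^i_t u$ in each Sobolev norm and hence joint continuity of all mixed derivatives.

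Next I would define the maximal existence time $T_{max}$ as the supremum of all $T>0$ for which \eqref{eq-ISMF} possesses a smooth solution on $I\times[0,T]$. By the first step $T_{max}\geq T_1>0$, and uniqueness forces the smooth solutions on different intervals to coincide on overlaps, so $u$ is unambiguously defined and smooth on $I\times[0,T_{max})$. To see that $T_{max}$ is genuinely maximal, suppose toward a contradiction that $u$ extended smoothly up to and including a finite time $T_{max}$. Then $u(\cdot,T_{max})\in C^\infty(I,\U^2)$ still obeys the Neumann condition, and, crucially, it still satisfies every compatibility condition $\widetilde{CC}(k)$: indeed Proposition~\ref{comp-cond2} yields $\frac{\p}{\p\nu}\p^j_t u|_{\p I\times[0,T_{max}]}=0$ for all $j$, which by the intrinsic characterization of Proposition~\ref{intrc-cd2} is exactly the statement that $\tilde{\n}^{2j+1}_x u(\cdot,T_{max})|_{\p I}=0$. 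Re-applying the local construction with initial datum $u(\cdot,T_{max})$ would then extend the solution beyond $T_{max}$, a contradiction.

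The main obstacle is the continuation step, specifically the verification that the compatibility conditions are \emph{preserved} along the flow so that the local theory may be restarted at any interior time. This is where the intrinsic, $\ep$-independent descriptions of Section~\ref{s: com-cond} do the real work: Propositions~\ref{comp-cond2} and \ref{intrc-cd2} together translate the boundary behaviour of the time-derivatives of a smooth solution into the vanishing of the odd covariant spatial derivatives defining $\widetilde{CC}(k)$, so that $u(\cdot,t)$ is an admissible initial datum for every $t<T_{max}$. Everything else --- the conversion of temporal into spatial derivatives via $\p_t u=u\times\p^2_x u$, the Sobolev product estimates of Lemma~\ref{alg}, and the equivalent-norm bounds of Lemma~\ref{eq-norm} --- is routine once this preservation is secured.
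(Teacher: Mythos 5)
Your proposal is correct and follows essentially the same route as the paper: invoke the one-dimensional local theory (Theorem \ref{thm2'}) to get a smooth solution on $[0,T_1]$ with $T_1$ depending only on $\norm{u_0}_{H^5(I)}$, observe that the solution at any later time still satisfies all the compatibility conditions $\widetilde{CC}(k)$, and continue to a maximal time. Your justification of the compatibility-preservation step via Propositions \ref{comp-cond2} and \ref{intrc-cd2} is in fact more explicit than the paper's ``it is not difficult to show,'' but it is the same argument.
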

\begin{proof}
By applying Theorem \ref{thm2}, we know that there exists a positive time $T_1$ depending only on $\norm{u_0}_{H^5(I)}$ such that the problem \eqref{eq-ISMF} admits a smooth local solution $u\in C^\infty(I\times [0,T_1])$.

On the other hand, it is not difficult to show that $u(x,T_1)$ meets the same compatibility condition as $u_0$ defined in \eqref{com-cond3}. Then, $T_1$ is an extendable time. Therefore, by taking the same arguments as in the proof of Theorem \ref{thm2} we can get a maximal existence time $T_{max}$ depending only on $\norm{u_0}_{H^5(I)}$ such that equation \eqref{eq-ISMF} admits a smooth solution $u$ on $[0,T_{max})$.
\end{proof}

Next, we show energy estimates for the local solution $u$. Without lose of generality, we use $C$ to denote constants independent of $u$ and $T$ appearing in the estimates in the following context. And especially, for any $k\in \mathbb{N}$, we use $C(\norm{u_0}^2_{H^{2k+1}}, T)$ to denote the constants depending only on $\norm{u_0}^2_{H^{2k+1}}$ and $T$, such that
\[C(\norm{u_0}^2_{H^{2k+1}}, T)<\infty\]
if $T<\infty$. For simplicity, we also denote the partial derivatives of any vector valued function $f$ by $f_t=\p_tf$ and $f_x=\p_xf$.

\subsection{$H^2$-energy estimate}
For any $T<T_{max}$, a simple calculation shows
\begin{equation}\label{ineq1}
\frac{\p}{\p t}\int_{I}|u_x|^2dx=2\int_{I}\<u_x,u_{xt}\>dx=-2\int_{I}\<u_{xx},u_{t}\>dx=0,
\end{equation}
and
\begin{align*}
\frac{\p}{\p t}\int_{I}|u_t|^2dx=&2\int_{I}\<u_t, u_{tt}\>dx=2\int_{I}\<u_t, (u\times u_{xx})_t\>dx\\
=&2\int_{I}\<u_t, u_t\times u_{xx}\>dx+2\int_{I}\<u_t, u\times u_{xxt}\>dx\\
=&-2\int_{I}\<u\times u_t, u_{txx}\>dx\\
=&2\int_{I}\<u_x\times u_t, u_{tx}\>dx+2\int_{I}\<u\times u_{tx}, u_{tx}\>dx\\
=&2\int_{I}\<u_x\times (u\times u_{xx}), u_{tx}\>dx\\
=&2\int_{I}\<u_x,u_{xx}\>\<u, u_{tx}\>dx-2\int_{I}\<u_x,u\>\<u_{xx}, u_{tx}\>dx\\
=&-\int_{I}|u_x|^2_x\<u_x, u_{t}\>dx=\int_{I}|u_x|^2\<u_x, u_{t}\>_xdx\\
=&\int_{I}|u_x|^2(\<u_{xx}, u_{t}\>+\<u_x,u_{xt}\>)dx\\
=&\int_{I}|u_x|^2\<u_x,u_{xt}\>dx=\frac{1}{4}\frac{\p}{\p t}\int_{I}|u_x|^4dx,
\end{align*}
namely,
\begin{equation}\label{ineq2}
\frac{\p}{\p t}\(\int_{I}|u_t|^2dx-\frac{1}{4}\int_{I}|u_x|^4dx\)=0.
\end{equation}
Here we have applied the facts:
\begin{itemize}
\item[$(1)$] $u_x(0,t)=u_x(1,t)=0$ and $u_{tx}(0,t)=u_{tx}(1,t)=0$ for any $t\in [0,T)$,
\item[$(2)$] $\<u_{xx}, u_t\>=0$ and $\<u, u_x\>=\<u,u_t\>=0$.
\end{itemize}
Since $|u_t|^2|_{t=0}=|\tau(u_0)|$ with $\tau(u_0)=u_{0xx}+|u_{0x}|^2u_0$, we have
\begin{align*}
\int_{I}|u_t|^2dx(t)=&\frac{1}{4}\int_{I}|u_x|^4dx(t)+\int_{I}|\tau(u_0)|^2dx-\frac{1}{4}\int_{I}|u_{0x}|^4dx\\
\leq&\frac{1}{4}\int_{I}|u_x|^4dx(t)+\int_{I}|\tau(u_0)|^2dx.
\end{align*}

To proceed, we need to recall the following Sobolev interpolation inequality.
\begin{lem}
Let $1\leq q,r\leq \infty$, $0\leq j\leq k\in \mathbb{N}$. For $f\in C^\infty(\Om)$ with $\text{dim}(\Om)=m$, there holds
\begin{equation}\label{sob-int-ineq}
\norm{\p_x^jf}_{L^p}\leq C_{p,r,q,j,k}\norm{f}^a_{H^k}\norm{f}^{1-a}_{L^q}
\end{equation}
where $p,q,r,a$ satisfies
\[\frac{1}{p}=\frac{j}{m}+a(\frac{1}{r}-\frac{k}{m})+\frac{1-a}{q}\]
and
\[\frac{j}{k}\leq a\leq 1.\]

In the case $\frac{1}{r}=\frac{k-j}{m}\neq 1$, inequality \eqref{sob-int-ineq} is not valid for $a=1$.
\end{lem}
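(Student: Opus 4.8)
The plan is to prove this as a special case of the classical Gagliardo--Nirenberg interpolation inequality, first on the whole space $\Real^m$ and then transferring to the bounded smooth domain $\Om$ by an extension argument. First I would record the scaling heuristic: replacing $f$ by $f(\lambda\,\cdot)$ and comparing both sides forces the stated relation $\frac{1}{p}=\frac{j}{m}+a(\frac{1}{r}-\frac{k}{m})+\frac{1-a}{q}$, so that this identity is not an extra hypothesis to be checked but rather the only dimensionally admissible choice of weight $a$. This same bookkeeping also pinpoints why $a=1$ must be excluded in the borderline regime $\frac{1}{r}=\frac{k-j}{m}\neq 1$, where the top-order norm alone cannot control $\norm{\p_x^jf}_{L^p}$.

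Next, on $\Real^m$ I would build the inequality from two ingredients. The base ingredient is the Gagliardo--Nirenberg--Sobolev embedding $\norm{g}_{L^{m/(m-1)}}\leq C\norm{\n g}_{L^1}$, proved by expressing $g(x)$ as an integral of $\p_{x_i}g$ along each coordinate axis and applying the iterated one-dimensional H\"older inequality; applying this to powers $|g|^t$ upgrades it to the family $\norm{g}_{L^{p^*}}\leq C\norm{\n g}_{L^p}$. The second ingredient is the elementary second-order interpolation $\norm{f'}_{L^p}\leq C\norm{f''}_{L^r}^{1/2}\norm{f}_{L^q}^{1/2}$ in one variable, obtained by integration by parts together with H\"older's inequality, and its multivariable analogue. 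I would then bootstrap: combining the Sobolev family with the second-order estimate and iterating on the pair $(j,k)$ yields \eqref{sob-int-ineq} for all admissible $p,q,r,a$ with $0\leq j\leq k$.

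Finally, to pass from $\Real^m$ to the bounded smooth domain $\Om$ I would invoke a bounded extension operator $E:W^{k,r}(\Om)\to W^{k,r}(\Real^m)$, which exists because $\Om$ is smooth and bounded, apply the whole-space inequality to $Ef$, and restrict back to $\Om$; since $\Om$ is bounded one may freely use the full norm $\norm{f}_{H^k(\Om)}$ in place of the homogeneous seminorm, absorbing the lower-order contributions. \emph{The main obstacle} I anticipate is not any single estimate but the careful accounting over the entire admissible range of exponents together with the borderline cases: verifying which triples $(p,q,r)$ and weights $a\in[\tfrac{j}{k},1]$ are reachable by the bootstrap, and isolating precisely the excluded endpoint $a=1$ when $\frac{1}{r}=\frac{k-j}{m}\neq 1$. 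For the application in this paper only the one-dimensional instance $m=1$ is actually needed (for example $\int_I|\p_xu|^4\leq C\norm{\p_xu}_{H^1}\norm{\p_xu}^3_{L^2}$, which is \eqref{sob-int-ineq} with $m=1$, $j=0$, $k=1$, $p=4$, $q=r=2$, $a=\tfrac14$), and in that regime the bootstrap terminates after a single step, so the exceptional analysis becomes unnecessary.
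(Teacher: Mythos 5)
The paper itself offers no proof of this lemma: it is explicitly ``recalled'' as the classical Gagliardo--Nirenberg interpolation inequality, and the only thing actually verified in the text is the corollary, namely the single instance $m=1$, $j=0$, $k=1$, $p=4$, $q=r=2$, $a=\tfrac14$ used for $\int_I|\p_xu|^4\,dx$. Your outline is therefore not comparable to anything in the paper, but it is the standard textbook route (essentially Nirenberg's original argument): the scaling identity forcing the exponent relation, the $L^{m/(m-1)}$--$\dot W^{1,1}$ endpoint via iterated one-dimensional H\"older, the upgrade to $\norm{g}_{L^{p^*}}\leq C\norm{\n g}_{L^p}$ by applying it to $|g|^t$, the second-order interpolation $\norm{f'}_{L^p}\leq C\norm{f''}_{L^r}^{1/2}\norm{f}_{L^q}^{1/2}$, the bootstrap on $(j,k)$, and Stein extension to pass to the bounded domain (where one should note that a single extension operator bounded simultaneously on $L^q$ and $W^{k,r}$ is needed, which Stein's construction provides). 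The ingredients and their order of deployment are all correct; what you have written is a plan rather than a proof, and the part you yourself flag as the main obstacle --- the combinatorial verification that the bootstrap reaches every admissible triple $(p,q,r)$ and every $a\in[\tfrac{j}{k},1]$, and the isolation of the excluded endpoint $a=1$ when $\tfrac{1}{r}=\tfrac{k-j}{m}\neq 1$ --- is exactly where the real work of the classical proof lives and is not carried out here. Since the paper treats the lemma as citable background and only ever invokes the elementary one-dimensional case (where, as you observe, the bootstrap collapses to a single step and no borderline analysis arises), your level of detail is adequate for the role the lemma plays, though it would not stand alone as a complete proof of the statement in its full generality.
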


As a corollary, we have
\begin{cor}
For any $u\in C^\infty(I)$, there holds
\[\norm{u_x}_{L^4}\leq C\norm{u_x}^\frac{1}{4}_{H^1}\norm{u_x}^\frac{3}{4}_{L^2}.\]
\end{cor}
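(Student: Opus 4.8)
The plan is to apply the Gagliardo--Nirenberg interpolation inequality \eqref{sob-int-ineq} directly to the function $f=u_x$, which lies in $C^\infty(I)$ since $u\in C^\infty(I)$. First I would record the parameter choices dictated by the desired estimate: the domain is one dimensional, so $m=1$; the left-hand side is the plain $L^4$-norm of $u_x$ with no extra derivative, so $j=0$ and $p=4$; the first factor on the right is the $H^1=W^{1,2}$-norm, so $k=1$ and $r=2$; and the second factor is the $L^2$-norm, so $q=2$.

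Next I would determine the interpolation exponent $a$ from the scaling relation in \eqref{sob-int-ineq}. Substituting $m=1$, $j=0$, $p=4$, $k=1$, $r=2$, $q=2$ into
\[\frac{1}{p}=\frac{j}{m}+a\(\frac{1}{r}-\frac{k}{m}\)+\frac{1-a}{q}\]
gives $\frac14=a\(\frac12-1\)+\frac{1-a}{2}=\frac12-a$, whence $a=\frac14$.

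Then I would check admissibility before invoking the inequality. The constraint $\frac{j}{k}\leq a\leq 1$ reads $0\leq \frac14\leq 1$, which holds, and since $a=\frac14\neq 1$ we are not in the excluded endpoint case (indeed $\frac1r=\frac12\neq 1=\frac{k-j}{m}$, so the exceptional hypothesis of the lemma is not even met). With these parameters \eqref{sob-int-ineq} yields
\[\norm{u_x}_{L^4}=\norm{\p_x^0 u_x}_{L^4}\leq C\,\norm{u_x}^{\frac14}_{H^1}\norm{u_x}^{\frac34}_{L^2},\]
which is exactly the asserted estimate.

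There is no genuine obstacle here; this corollary is a one-line specialization of the preceding interpolation lemma. The only point requiring care is reading off the correct parameter values and verifying the scaling identity, so that the exponents $\frac14$ and $\frac34$ emerge as claimed rather than being mismatched.
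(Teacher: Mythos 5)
Your proposal is correct and is essentially identical to the paper's proof, which likewise cites the interpolation inequality \eqref{sob-int-ineq} with $m=1$, $j=0$, $k=1$, $q=r=2$ and $f=u_x$. Your explicit computation of $a=\frac14$ from the scaling relation and the check of the admissibility constraints simply spell out what the paper leaves implicit.
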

\begin{proof}
This inequality is just inequality \eqref{sob-int-ineq} with $m=1,\,j=0,\, k=1,\, q=r=2$ and $f=u_x$.
\end{proof}

On the other hand, since
\[u_{xx}=-u\times u_t-|u_x|^2u,\]
one can show
\begin{align*}
\int_{I}|u_{xx}|^2dx=&\int_{I}|u_t|^2dx+\int_{I}|u_x|^4dx\\
\leq &\frac{5}{4}\int_{I}|u_x|^4dx+\int_{I}|\tau(u_0)|^2dx\\
\leq &C\norm{u_x}_{H^1}\norm{u_x}^3_{L^2}+\int_{I}|\tau(u_0)|^2dx\\
\leq &C\norm{u_x}^4_{L^2}+C\norm{u_{xx}}_{L^2}\norm{u_x}^3_{L^2}+\int_{I}|\tau(u_0)|^2dx\\
\leq &C(\norm{u_x}^2_{L^2}+1)^3+\int_{I}|\tau(u_0)|^2dx+\frac{1}{2}\norm{u_{xx}}^2_{L^2}.
\end{align*}
This implies
\[\int_{I}|u_{xx}|^2dx\leq C(\norm{u_{0x}}^2_{L^2}+1)^3+\int_{I}|\tau(u_0)|^2dx.\]

Hence, we concludes that
\begin{prop}
For any $T<T_{max}$, the solution $u$ satisfies an energy estimate
\begin{equation}\label{ineq3}
\sup_{0<t<T}(\norm{u}^2_{H^2}+\norm{u_t}^2_{L^2})\leq C(\norm{u_{0}}^2_{H^1}+1)^3+\norm{\tau(u_0)}^2_{L^2}.
\end{equation}
\end{prop}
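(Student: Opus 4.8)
The plan is to derive \eqref{ineq3} from two conservation laws and one interpolation inequality. First I would establish the energy identity \eqref{ineq1}, that $\int_I|u_x|^2\,dx$ is constant in $t$: differentiating under the integral, integrating by parts in $x$, and using the Neumann conditions $\p_xu(0,t)=\p_xu(1,t)=0$ together with the pointwise relation $\<u_{xx},u_t\>=0$ (immediate from $u_t=u\times\p_x^2u$) kills every term. Consequently $\norm{u_x}_{L^2}^2(t)=\norm{u_{0x}}_{L^2}^2$ for all $t<T_{max}$, a fact I will use repeatedly below.

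Second, and this is the crucial step, I would prove the conservation law \eqref{ineq2}, that $\int_I|u_t|^2\,dx-\tfrac14\int_I|u_x|^4\,dx$ is independent of $t$. Differentiating $\int_I|u_t|^2\,dx$ and repeatedly integrating by parts in $x$, while discarding all boundary contributions — these vanish precisely because both $u_x$ and $u_{tx}$ are zero at the endpoints — and exploiting the constraints $\<u,u_x\>=\<u,u_t\>=0$ and $\<u_{xx},u_t\>=0$, one rewrites the time derivative until it collapses to $\tfrac14\frac{\p}{\p t}\int_I|u_x|^4\,dx$. Since the initial velocity satisfies $u_t|_{t=0}=u_0\times\tau(u_0)$ and hence $|u_t|^2|_{t=0}=|\tau(u_0)|^2$ (because $\tau(u_0)\perp u_0$ and $|u_0|=1$), integrating the conserved quantity from $0$ to $t$ yields the pointwise-in-time bound $\int_I|u_t|^2\,dx\le\tfrac14\int_I|u_x|^4\,dx+\int_I|\tau(u_0)|^2\,dx$.

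Next I would convert this control on $u_t$ into an $H^2$ bound. From the equation one has the algebraic identity $u_{xx}=-u\times u_t-|u_x|^2u$; the first term is orthogonal to the second (being orthogonal to $u$) and has modulus $|u_t|$ (since $u_t\perp u$ and $|u|=1$), so $|u_{xx}|^2=|u_t|^2+|u_x|^4$. Integrating and inserting the bound on $\int_I|u_t|^2\,dx$ gives $\norm{u_{xx}}_{L^2}^2\le\tfrac54\int_I|u_x|^4\,dx+\norm{\tau(u_0)}_{L^2}^2$. The quartic term is then handled by the Gagliardo--Nirenberg inequality of the preceding Corollary, $\int_I|u_x|^4=\norm{u_x}_{L^4}^4\le C\norm{u_x}_{H^1}\norm{u_x}_{L^2}^3$; expanding $\norm{u_x}_{H^1}\lesssim\norm{u_x}_{L^2}+\norm{u_{xx}}_{L^2}$ and applying Young's inequality lets me absorb a $\tfrac12\norm{u_{xx}}_{L^2}^2$ into the left-hand side, while all remaining powers of $\norm{u_x}_{L^2}=\norm{u_{0x}}_{L^2}$ are constant by the first conservation law. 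This closes the estimate for $\norm{u_{xx}}_{L^2}$, and adding back $\norm{u}_{L^2}^2$ (bounded since $|u|=1$ on the bounded interval) and $\norm{u_x}_{L^2}^2$ delivers \eqref{ineq3} uniformly on $(0,T)$.

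The main obstacle is the second step, the conservation law \eqref{ineq2}. Unlike the energy identity it does not fall out of a single integration by parts, but requires careful bookkeeping of several integrations by parts in which the geometric constraints and the boundary conditions on both $u_x$ and $u_{tx}$ are invoked in turn to cancel every boundary term and every stray interior term. Getting this cancellation exactly right is the heart of the argument, and it is precisely where the one-dimensionality and the sphere target $\U^2$ are essential.
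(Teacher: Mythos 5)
Your proposal is correct and follows essentially the same route as the paper: the conservation of $\int_I|u_x|^2\,dx$, the conserved quantity $\int_I|u_t|^2\,dx-\tfrac14\int_I|u_x|^4\,dx$ combined with $|u_t|^2|_{t=0}=|\tau(u_0)|^2$, the pointwise identity $u_{xx}=-u\times u_t-|u_x|^2u$ giving $|u_{xx}|^2=|u_t|^2+|u_x|^4$, and finally the Gagliardo--Nirenberg bound $\int_I|u_x|^4\le C\norm{u_x}_{H^1}\norm{u_x}_{L^2}^3$ with Young's inequality to absorb $\tfrac12\norm{u_{xx}}_{L^2}^2$. The only ingredient you leave as a sketch, the cancellation scheme behind \eqref{ineq2}, is carried out in the paper exactly as you describe, using the boundary conditions on $u_x$, $u_{tx}$ and the constraints $\<u,u_x\>=\<u,u_t\>=\<u_{xx},u_t\>=0$.
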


\subsection{$H^3$-energy estimate}
Since $u_t=u\times u_{xx}$ and $u_{xx}=-u\times u_t-|u_x|^2u$, a simple calculation shows
\begin{equation}\label{eq-H3}
u_{tt}=u\times u_{txx}+u_t\times u_{xx}
\end{equation}
and
\begin{equation}\label{1eq-H3}
u_{xxx}=-u_x\times u_t-u\times u_{tx}-2u_{xx}\cdot u_x u-|u_x|^2u_x.
\end{equation}

Taking $-u_{txx}$ as a test function to \eqref{eq-H3}, we can see
\begin{align*}
\frac{1}{2}\frac{\p}{\p t}\int_{I}|u_{tx}|^2=&-\int_I\<u_t\times u_{xx}, u_{txx}\>dx\\
=&\int_I\<u_t\times u_{xxx}, u_{tx}\>dx,
\end{align*}
where we have used the boundary condition $u_{xt}(0,t)=0,\, u_{xt}(1,t)=0$ to cancel the boundary term arising from integration by parts. Then substituting \eqref{1eq-H3} into the above formula, one can show
\begin{align*}
\frac{1}{2}\frac{\p}{\p t}\int_{I}|u_{tx}|^2=&\int_I\<u_t\times u_{xxx}, u_{tx}\>dx\\
=&-\int_I\<u_t\times (u_x\times u_t), u_{tx}\>dx-\int_I\<u_t\times (u\times u_{tx}), u_{tx}\>dx\\
&-2\int_I\<u_t\times u, u_{tx}\>\<u_{xx}, u_x\>dx-\int_I|u_x|^2\<u_t\times u_x, u_{tx}\>dx\\
=&A_1+A_2+A_3+A_4.
\end{align*}
Then we estimate the above four terms $A_1-A_4$ as follows.
\begin{align*}
|A_1|=&\left|\int_I\<u_t\times (u_x\times u_t), u_{tx}\>dx\right|\\
=&\left|\int_I|u_t|^2\<u_x, u_{tx}\>dx-\int_I\<u_t,u_x\>\<u_t, u_{tx}\>dx\right|\\
\leq &C\int_I|u_t|^2|u_x||u_{tx}|dx\leq C|u_t|_{L^\infty}|u_x|_{L^\infty}\norm{u_t}_{L^2}\norm{u_{tx}}_{L^2}\\
\leq &C\norm{u}_{H^2}\norm{u_t}_{H^1}\norm{u_t}_{L^2}\norm{u_{tx}}_{L^2}\\
\leq & C(\norm{u}^2_{H^2}+\norm{u_t}^2_{L^2}+1)^2(\norm{u_{tx}}^2_{L^2}+1),
\end{align*}
where we have used the following Sobolev embedding
\[C^0(I)\hookrightarrow W^{1,1}(I)\hookrightarrow H^1(I).\]
\begin{align*}
|A_2|=&\left|\int_I\<u_t\times (u\times u_{tx}), u_{tx}\>dx\right|\\
=&\left|\int_I\<u_t, u_{tx}\>\<u,u_{tx}\>dx-\int_I\<u_t,u\>\<u_{tx}, u_{tx}\>dx\right|\\
=&\left|\int_I\<u_t, u_{tx}\>\<u_x,u_{t}\>dx\right|\leq \int_I|u_t|^2|u_x||u_{tx}|dx\\
\leq& C(\norm{u}^2_{H^2}+\norm{u_t}^2_{L^2}+1)^2(\norm{u_{tx}}^2_{L^2}+1).
\end{align*}
\begin{align*}
|A_3|=&2\left|\int_I\<u_t\times u, u_{tx}\>\<u_{xx}, u_x\>dx\right|\leq\int_I|u_t||u_{tx}||u_{xx}|u_x|dx\\
\leq &C|u_t|_{L^\infty}|u_x|_{L^\infty}\norm{u_{tx}}_{L^2}\norm{u_{xx}}_{L^2}\leq C\norm{u}^2_{H^2}\norm{u_t}_{H^1}\norm{u_{tx}}_{L^2}\\
\leq &C(\norm{u}^2_{H^2}+\norm{u_t}^2_{L^2}+1)^2(\norm{u_{tx}}^2_{L^2}+1).
\end{align*}
and
\begin{align*}
|A_4|=&\left|\int_I|u_x|^2\<u_t\times u_x, u_{tx}\>dx\right|\leq \int_I|u_x|^3|u_t||u_{tx}dx\\
\leq &C\norm{u}^3_{H^2}(\norm{u_t}^2_{L^2}+\norm{u_{tx}}^2_{L^2})\\
\leq &C(\norm{u}^2_{H^2}+\norm{u_t}^2_{L^2}+1)^3(\norm{u_{tx}}^2_{L^2}+1).
\end{align*}
Therefore, we have
\begin{align*}
\frac{1}{2}\frac{\p}{\p t}\int_{I}|u_{tx}|^2dx\leq C(\norm{u}^2_{H^2}+\norm{u_t}^2_{L^2}+1)^3(\norm{u_{tx}}^2_{L^2}+1),
\end{align*}
which implies
\begin{equation}\label{add}
\sup_{0<t<T}\int_{I}|u_{tx}|^2dx\leq e^{C(\norm{u_0}_{H^1},\, \norm{\tau(u_0)}_{L^2})T}\int_{I}|u_{tx}|^2|_{t=0},
\end{equation}
where
\[u_{tx}(x,0)=u_{0x}\times u_{0xx}+u_0\times u_{0xxx}.\]
Hence, we have
\[\sup_{0<t<T}\int_{I}|u_{tx}|^2\leq C(\norm{u_0}_{H^3},T).\]

Furthermore, we apply the formula \eqref{1eq-H3} to give a bound
\begin{align*}
\int_{I}|u_{xxx}|^2dx\leq &C\{\int_{I}|u_x\times u_t|^2dx+\int_{I}|u\times u_{tx}|^2dx\}\\
&+C\{\int_{I}|u_{xx}\cdot u_x|^2dx+\int_{I}|u_x|^6dx\}\\
=&B_1+B_2+B_3+B_4,
\end{align*}
where
\begin{align*}
|B_1|=& C\int_{I}|u_x\times u_t|^2dx\leq C\norm{u_x}^2_{L^\infty}\norm{u_t}^2_{L^2}\leq C\norm{u}^2_{H^2}\norm{u_t}^2_{L^2},\\
|B_2|=&C\int_{I}|u\times u_{tx}|^2dx\leq C\norm{u_{tx}}^2_{L^2},\\
|B_3|=&C\int_{I}|u_{xx}\cdot u_x|^2dx\leq C|u_x|^2_{L^\infty}\norm{u_{xx}}^2_{L^2}\leq C\norm{u}^4_{H^2},\\
|B_4|=&C\int_{I}|u_x|^6dx\leq \norm{u}^6_{H^2}.
\end{align*}
We concludes from the above
\begin{equation}
	\int_{I}|u_{xxx}|^2dx\leq C\norm{u_{tx}}^2_{L^2}+(\norm{u}^2_{H^2}+1)^3\leq C(\norm{u_0}_{H^3}, T).
\end{equation}

Therefore, we get the following $H^3$-bound of $u$.
\begin{prop}
For any $T<T_{max}$, the solution $u$ satisfies an energy estimate:
\[\sup_{0<t<T}\(\norm{u}^2_{H^3}+\norm{u_t}^2_{H^1}\)\leq C(\norm{u_0}_{H^3}, T).\]
\end{prop}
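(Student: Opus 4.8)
The plan is to upgrade the $H^2$-estimate \eqref{ineq3} by exactly one derivative, controlling $\norm{u_t}^2_{H^1}$ and $\norm{u}^2_{H^3}$ simultaneously. First I would differentiate the flow once in time to obtain the identity \eqref{eq-H3} for $u_{tt}$, and differentiate the relation $u_{xx}=-u\times u_t-|u_x|^2u$ once more in space to obtain the pointwise formula \eqref{1eq-H3} expressing $u_{xxx}$ through $u_t$, $u_{tx}$ and lower-order terms. The reason to work with the temporal derivative $u_t$ rather than with $u_{xx}$ directly is that the Neumann condition propagates to $u_{tx}(0,t)=u_{tx}(1,t)=0$, so $u_{tx}$ is an admissible test direction and all boundary contributions vanish under integration by parts.

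Next I would test equation \eqref{eq-H3} against $-u_{txx}$. The leading linear term $u\times u_{txx}$ contributes nothing after integrating by parts (using the vanishing boundary data), while the term $u_t\times u_{xx}$ is rewritten via \eqref{1eq-H3} as the sum $A_1+A_2+A_3+A_4$ of cubic-and-higher expressions in $u_x,u_t,u_{tx}$. Each $A_i$ is estimated by placing the worst factors in $L^\infty$ through the one-dimensional embedding $C^0(I)\hookrightarrow H^1(I)$, which produces a differential inequality of the schematic form
\[
\frac{1}{2}\frac{\p}{\p t}\int_{I}|u_{tx}|^2dx\leq C(\norm{u}^2_{H^2}+\norm{u_t}^2_{L^2}+1)^3(\norm{u_{tx}}^2_{L^2}+1).
\]

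The crucial point making this closed is the already-established uniform bound \eqref{ineq3}: it controls the prefactor $(\norm{u}^2_{H^2}+\norm{u_t}^2_{L^2}+1)^3$ by a constant depending only on $\norm{u_0}_{H^3}$ (since $\norm{\tau(u_0)}_{L^2}\leq C\norm{u_0}_{H^2}$). Hence Gronwall's inequality applies and yields $\sup_{0<t<T}\norm{u_{tx}}^2_{L^2}\leq C(\norm{u_0}_{H^3},T)$, once I note that the initial value $u_{tx}(x,0)=u_{0x}\times u_{0xx}+u_0\times u_{0xxx}$ is bounded by $\norm{u_0}_{H^3}$. Feeding this bound, together with \eqref{ineq3}, back into the identity \eqref{1eq-H3} and estimating the resulting terms $B_1,\dots,B_4$ controls $\norm{u_{xxx}}_{L^2}$; collecting $\norm{u}_{H^2}$, $\norm{u_{xxx}}_{L^2}$ and $\norm{u_t}_{H^1}$ then gives the asserted estimate.

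The hard part will be the energy identity for $u_{tx}$: I must arrange that every term surviving after the substitution \eqref{1eq-H3} is controlled either by the quantity $\norm{u_{tx}}^2_{L^2}$ itself or by the already-bounded $H^2$-norm, so that no uncontrolled third-order derivative (neither $\norm{u_{xxx}}_{L^2}$ nor $\norm{u_{txx}}_{L^2}$) remains on the right-hand side. Because the flow is non-dissipative ($\ep=0$) there is no parabolic gain to absorb such terms, so the cancellation of the top-order term and the exact $L^\infty$ budget furnished by $H^1(I)\hookrightarrow L^\infty(I)$ in one space dimension are exactly what close the estimate; this is also why the argument is special to $\dim(\Om)=1$.
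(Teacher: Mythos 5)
Your proposal follows essentially the same route as the paper: the same identities \eqref{eq-H3} and \eqref{1eq-H3}, the same test function $-u_{txx}$ with the boundary data $u_{tx}(0,t)=u_{tx}(1,t)=0$, the same decomposition into $A_1,\dots,A_4$ estimated via the one-dimensional embedding $H^1(I)\hookrightarrow L^\infty(I)$, Gronwall with the prefactor controlled by the $H^2$-bound \eqref{ineq3}, and the recovery of $\norm{u_{xxx}}_{L^2}$ from \eqref{1eq-H3} via $B_1,\dots,B_4$. The only cosmetic remark is that the term $\<u\times u_{txx},u_{txx}\>$ vanishes pointwise by orthogonality of the cross product rather than by integration by parts; otherwise the argument is the paper's.
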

\subsection{Higher order energy estimates}
Now we return to equation \eqref{eq-H3}, i.e.
\[u_{tt}=u_t\times u_{xx}+u\times u_{txx},\]
by differentiating the above equation with respect to $t$ we can show
\begin{equation}\label{eq-H5}
u_{ttt}=u_{tt}\times u_{xx}+2u_t\times u_{txx}+u\times u_{ttxx}.
\end{equation}
Moreover, differentiating the following with respect to $t$
\[u_{xx}=-u\times u_t-|u_{x}|^2u\]
leads to
\begin{equation}\label{1eq-H5}
u_{txx}=-u\times u_{tt}-2\<u_{tx}, u_x\>u-|u_{x}|^2u_t.
\end{equation}

Then, taking $u_{tt}$ as a test function to \eqref{eq-H5}, we obtain
\begin{equation}\label{add1}
\begin{aligned}
\frac{1}{2}\frac{\p}{\p t}\int_{I}|u_{tt}|^2dx&=2\int_{I}\<u_t\times u_{txx}, u_{tt}\>dx+\int_{I}\<u\times u_{ttxx}, u_{tt}\>dx\\
=&-2\int_{I}\<u_t\times (u\times u_{tt}), u_{tt}\>dx-4\int_{I}\<u_{tx}, u_x\>\<u_t\times u, u_{tt}\>dx\\
&-\int_{I}\<u\times u_{ttx}, u_{ttx}\>dx-\int_{I}\<u_x\times u_{ttx}, u_{tt}\>dx\\
=&-2\int_{I}\<u_t,u_{tt}\>\<u, u_{tt}\>dx-4\int_{I}\<u_{tx}, u_x\>\<u_t\times u, u_{tt}\>dx\\
&-\int_{I}\<u_x\times u_{ttx}, u_{tt}\>dx\\
=&2\int_{I}\<u_t,u_{tt}\>\<u_t, u_{t}\>dx+4\int_{I}\<u_{tx}, u_x\>\<u_t\times u, u_{tt}\>dx\\
&-\int_{I}\<u_x\times u_{ttx}, u_{tt}\>dx\\
\leq &2\norm{u_t}^3_{L^6}\norm{u_{tt}}_{L^2}+4\norm{u_{tx}}_{L^2}\norm{u_x}_{L^\infty}\norm{u_t}_{L^6}\norm{u_{tt}}_{L^3}\\
&+\norm{u_x}_{L^6}\norm{u_{tt}}_{L^3}\norm{u_{ttx}}_{L^2}\\
\leq &C(\norm{u_t}^2_{H^1}+\norm{u}^2_{H^2}+1)^3(\norm{u_{tt}}^2_{H^1}+1).
\end{aligned}
\end{equation}

Next, we choose $-u_{ttxx}$ as a test function to \eqref{eq-H5}. Since $u_{ttx}(0,t)=0$ and $u_{ttx}(1,t)=0$, we can deduce that there holds
\begin{equation}\label{add2}
\begin{aligned}
\frac{1}{2}\frac{\p}{\p t}\int_{I}|u_{ttx}|^2dx=&-\int_{I}\<u_{tt}\times u_{xx},u_{ttxx}\>dx-2\int_{I}\<u_t\times u_{txx},u_{ttxx}\>dx\\
=&\int_{I}\<u_{tt}\times u_{xxx},u_{ttx}\>dx+2\int_{I}\<u_{tx}\times u_{txx},u_{ttx}\>dx\\
&+2\int_{I}\<u_t\times u_{txxx},u_{ttx}\>dx=D_1+D_2+D_3,
\end{aligned}
\end{equation}
where
\begin{align*}
|D_1|\leq &C\norm{u_{tt}}_{L^\infty}\norm{u_{xxx}}_{L^2}\norm{u_{ttx}}_{L^2}\leq C\norm{u}_{H^3}\norm{u_{tt}}^2_{H^1},
\end{align*}
\begin{align*}
|D_2|\leq& C\int_{I}|u_{tx}||u_{txx}||u_{ttx}|dx
\leq C|u_{txx}|_{L^\infty}\norm{u_{tx}}_{L^2}\norm{u_{ttx}}_{L^2}\\
\leq &C\norm{u_{txx}}_{W^{1,1}}\norm{u_{tx}}_{L^2}\norm{u_{ttx}}_{L^2},
\end{align*}
and
\begin{align*}
|D_3|= &2\left|\int_{I}\<u_t\times u_{txxx},u_{ttx}\>dx\right|\leq C|u_t|_{L^\infty}\norm{u_{txxx}}_{L^2}\norm{u_{ttx}}_{L^2}\\
\leq&C\norm{u_t}_{H^1}\norm{u_{txxx}}_{L^2}\norm{u_{ttx}}_{L^2}.
\end{align*}

Now, we turn to estimating $\norm{u_{txx}}_{W^{1,1}}$ and $\norm{u_{txxx}}_{L^2}$ which appeared in the above inequalities.
By differentiating the above equation \eqref{1eq-H5} with respect to $x$ we have
\begin{align*}
u_{txxx}=&-u_x\times u_{tt}-u\times u_{ttx}-2\<u_{tx},u_{xx}\>u-2\<u_{txx},u_{x}\>u\\
&-2\<u_{tx},u_{xx}\>u_x-2\<u_{xx},u_x\>u_t-|u_x|^2u_{xt}.
\end{align*}
So, from the two equations obtained in the above respectively we take a simple computation to show
\begin{equation}\label{add3}
\begin{aligned}
\int_{I}|u_{txx}|^2dx\leq &C\{\int_{I}|u_{tt}|^2dx+\int_{I}|u_{tx}|^2|u_x|^2dx+\int_{I}|u_{x}|^4|u_t|^2dx\}\\
\leq &C(\norm{u_{tt}}^2_{L^2}+|u_x|^2_{L^\infty}\norm{u_{tx}}^2_{L^2}+|u_x|^4_{L^\infty}\norm{u_t}^2_{L^2})\\
\leq &C\norm{u_{tt}}^2_{L^2}+C(\norm{u}^2_{H^2}+1)^2\norm{u_t}^2_{H^1},
\end{aligned}
\end{equation}
and
\begin{equation}\label{add4}
\begin{aligned}
\int_{I}|u_{txxx}|^2dx\leq &C\{\int_{I}|u_x\times u_{tt}|^2dx+\int_{I}| u_{ttx}|^2dx\}\\
&+C\{\int_{I}|\<u_{tx},u_{xx}\>|^2dx+\int_{I}|\<u_{txx},u_{x}\>|^2dx\}\\
&+C\{\int_{I}|\<u_{tx},u_{xx}\>u_x|^2dx+\int_{I}|\<u_{xx},u_{x}\>u_t|^2dx+\int_{I}|u_x|^4|u_{xt}|^2dx\}\\
\leq &C\{|u_x|^2_{L^\infty}\norm{u_{tt}}^2_{L^2}+\norm{u_{ttx}}^2_{L^2}+|u_{xx}|^2_{L^\infty}\norm{u_{tx}}^2_{L^2}+|u_{x}|^2_{L^\infty}\norm{u_{txx}}^2_{L^2}\}\\
&+C\{|u_x|^4_{L^\infty}\norm{u_{tx}}^2_{L^2}+\norm{u_x}^2_{L^\infty}\norm{u_{xx}}^2_{L^4}\norm{u_t}^2_{L^4}+|u_x|^2_{L^\infty}|u_{xx}|^2_{L^\infty}\norm{u_t}^2_{L^2}\}\\
\leq &C(\norm{u}^2_{H^3}+\norm{u_t}^2_{H^1}+1)^4(\norm{u_{tt}}^2_{H^1}+1).
\end{aligned}
\end{equation}
Thus, we have obtained the estimates of $\norm{u_{txx}}_{H^1}$ and $\norm{u_{txxx}}_{L^2}$. Moreover, in view of \eqref{add3}, \eqref{add4} and the fact
$$\norm{u_{txx}}_{W^{1,1}}\leq \norm{u_{txx}}_{H^1},$$
we take a simple calculation to show
\[|D_1|+|D_2|+|D_3|\leq C(\norm{u}^2_{H^3}+\norm{u_t}^2_{H^1}+1)^4(\norm{u_{tt}}^2_{H^1}+1).\]
Therefore, we can derive from \eqref{add2}
\begin{equation}\label{add2'}
\frac{1}{2}\frac{\p}{\p t}\int_{I}|u_{ttx}|^2dx\leq C(\norm{u}^2_{H^3}+\norm{u_t}^2_{H^1}+1)^4(\norm{u_{tt}}^2_{H^1}+1).
\end{equation}

Now, we combine \eqref{add1} and \eqref{add2'} to obtain
\begin{align*}
	\frac{1}{2}\frac{\p}{\p t}\norm{u_{tt}}^2_{H^1}\leq C(\norm{u}^2_{H^3}+\norm{u_t}^2_{H^1}+1)^4(\norm{u_{tt}}^2_{H^1}+1),
\end{align*}
which implies that there holds true
\begin{align*}
	\sup_{0<t<T}\norm{u_{tt}}^2_{H^1}\leq C(\norm{u_0}_{H^5}, T).
\end{align*}
Once the above estimate is established, it follows from\eqref{add}, \eqref{add3} and \eqref{add4} that
\begin{align*}
	\sup_{0<t<T}\norm{u_{t}}^2_{H^3}\leq C(\norm{u_0}_{H^5}, T).
\end{align*}

On the other hand, since $u_{xx}=-u\times u_t-|u_{x}|^2u$ and $$\sup_{0<t<T}\norm{u}^2_{H^3}\leq C(\norm{u_0}_{H^3},T),$$
it is not difficult to show
\[\sup_{0<t<T}\norm{u_{xx}}^2_{H^2}\leq C(\norm{u_0}_{H^5},T),\]
which then implies
\[\sup_{0<t<T}\norm{u}^2_{H^4}\leq C(\norm{u_0}_{H^5},T).\]

Furthermore, applying the equation again
\[u_{xx}=-u\times u_t-|u_{x}|^2u,\]
we can show
\[\sup_{0<t<T}\norm{u_{xx}}^2_{H^3}\leq C(\norm{u_0}_{H^5},T),\]
namely,
\[\sup_{0<t<T}\norm{u}^2_{H^5}\leq C(\norm{u_0}_{H^5},T).\]

In other word, we get the following estimate of $u$.
\begin{prop}\label{H5-es}
The solution $u$ has the following energy bound
\[\sup_{0<t<T}\(\norm{u}^2_{H^5}+\norm{u_t}^2_{H^3}+\norm{u_{tt}}^2_{H^1}\)\leq C(\norm{u_0}_{H^5}, T).\]
\end{prop}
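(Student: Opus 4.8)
The plan is to obtain a uniform-in-$t$ bound on the top energy $\norm{u_{tt}}^2_{H^1}$ by a Gronwall argument applied to the twice time-differentiated flow, and then to recover all the spatial regularity of $u$ and $u_t$ by bootstrapping through the elliptic-type identity $u_{xx}=-u\times u_t-|u_x|^2u$. Since the preceding $H^3$-energy estimate already furnishes $\sup_{0<t<T}(\norm{u}^2_{H^3}+\norm{u_t}^2_{H^1})\leq C(\norm{u_0}_{H^3},T)$, I may treat the combination $\norm{u}^2_{H^3}+\norm{u_t}^2_{H^1}$ as an a priori controlled coefficient throughout, so that only $\norm{u_{tt}}^2_{H^1}$ remains to be closed.

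First I would differentiate the flow twice in time to arrive at \eqref{eq-H5} and test it successively against $u_{tt}$ and against $-u_{ttxx}$. In both tests the principal factor $u\times u_{ttxx}$ is handled by integration by parts, after which the antisymmetry of $J=u\times$ annihilates the resulting top-order self-interaction (respectively $\int_I\<u\times u_{ttx},u_{ttx}\>dx=0$ and $\int_I\<u\times u_{ttxx},u_{ttxx}\>dx=0$), the boundary contributions vanishing because $u_{ttx}|_{\p I}=0$. The $u_{tt}$-test then yields \eqref{add1}, whose remaining terms are cubic in first derivatives and bounded via the one-dimensional embedding $H^1(I)\hookrightarrow C^0(I)$; the $-u_{ttxx}$-test yields \eqref{add2'}, at the price of the quantities $u_{txx}$ and $u_{txxx}$. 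Rather than estimate these as spatial derivatives directly, I would re-express them through the $x$-derivatives of the constraint \eqref{1eq-H5}, so that their top-order content is carried by $u_{tt}$ and $u_{ttx}$; this is precisely what \eqref{add3} and \eqref{add4} record, and after substitution the only surviving top-order term is the energy quantity $\norm{u_{ttx}}^2_{L^2}$.

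Adding \eqref{add1} and \eqref{add2'} gives a closed differential inequality $\tfrac{d}{dt}\norm{u_{tt}}^2_{H^1}\leq C(\norm{u}^2_{H^3}+\norm{u_t}^2_{H^1}+1)^4(\norm{u_{tt}}^2_{H^1}+1)$, whose coefficient is uniformly bounded by the $H^3$-estimate. The (linear) Gronwall inequality then gives $\sup_{0<t<T}\norm{u_{tt}}^2_{H^1}\leq C(\norm{u_0}_{H^5},T)$, the initial value $u_{tt}(\cdot,0)$ being controlled by $\norm{u_0}_{H^5}$ through its explicit expression in $u_0$. Substituting this bound back into \eqref{add}, \eqref{add3} and \eqref{add4} upgrades the control to $\sup_{0<t<T}\norm{u_t}^2_{H^3}\leq C(\norm{u_0}_{H^5},T)$. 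Finally I would bootstrap the spatial regularity of $u$ itself: writing $u_{xx}=-u\times u_t-|u_x|^2u$ and applying the product estimate of Lemma \ref{alg}, the established bounds give $\norm{u_{xx}}_{H^2}$, hence $\norm{u}_{H^4}$, and one further application of the same identity promotes this to $\norm{u_{xx}}_{H^3}$, i.e. $\norm{u}_{H^5}$, which completes the claimed bound.

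The main obstacle is the threatened loss of derivatives in the $H^1$-estimate of $u_{tt}$: both $u\times u_{ttxx}$ in the energy identity and the auxiliary quantity $u_{txxx}$ sit one spatial derivative above the energy level. The decisive points are that the antisymmetry of $J=u\times$ cancels the top-order self-interaction after integration by parts (with the boundary contributions vanishing because $u_{ttx}|_{\p I}=0$, which follows from time-differentiating the Neumann condition as in Proposition \ref{comp-cond2}), and that every remaining high-order spatial derivative of $u_t$ can be traded, through the differentiated constraint \eqref{1eq-H5}, for a time derivative already accounted for in the energy. Ensuring that this trading introduces no hidden circularity — i.e. that the sole surviving top-order term is the Gronwall-absorbable $\norm{u_{ttx}}^2_{L^2}$ — is the crux of the argument.
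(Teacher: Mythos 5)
Your proposal is correct and follows essentially the same route as the paper: testing the twice time-differentiated equation \eqref{eq-H5} against $u_{tt}$ and $-u_{ttxx}$, using the antisymmetry of $u\times$ to cancel the top-order self-interactions, trading $u_{txx}$ and $u_{txxx}$ for $u_{tt}$ and $u_{ttx}$ via the differentiated constraint \eqref{1eq-H5}, closing with Gronwall, and then bootstrapping $\norm{u}_{H^4}$ and $\norm{u}_{H^5}$ through $u_{xx}=-u\times u_t-|u_x|^2u$. No substantive differences from the paper's argument.
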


With the above $H^5$-energy estimates of $u$ in Proposition \ref{H5-es} at hand, we then apply almost the same argument as that in Section \ref{s: hig-reg} with $\ep=0$ to show the higher order energy bounds of $u$ as follows.
\begin{prop}\label{es-hig-u}
Let $k\geq 2$. For any $T<T_{max}$, there exists constant $C_k(\norm{u_0}^2_{H^{2k+1}}, T)$ such that for any $0\leq i\leq k$ the solution $u$ satisfies the following energy bound
\begin{equation}\label{es-u}
\sup_{0<t<T}\norm{\p^i_tu}^2_{H^{2(k-i)+1}}\leq C_k(\norm{u_0}^2_{H^{2k+1}}, T).
\end{equation}
\end{prop}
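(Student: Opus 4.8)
The plan is to argue by induction on $k$, paralleling the scheme of Section \ref{s: hig-reg} but specialized to one space dimension and to $\ep=0$. The base case $k=2$ is already contained in Proposition \ref{H5-es}, which is exactly \eqref{es-u} since $(2(k-i)+1)_{i=0,1,2}=(5,3,1)$. Assume therefore that \eqref{es-u} holds at level $k-1$, i.e. $\sup_{0<t<T}\norm{\p^i_tu}^2_{H^{2(k-1-i)+1}}\le C_{k-1}(\norm{u_0}^2_{H^{2k-1}},T)$ for all $0\le i\le k-1$, and set $w_k=\p^k_tu$. Differentiating $u_t=u\times u_{xx}$ repeatedly in $t$ shows that $w_k$ solves $\p_tw_k=u\times\p_x^2w_k+w_k\times\p_x^2u+F_k$ with the lower-order nonlinearity $F_k:=\sum_{i+j=k,\,0\le i,j<k}C^i_kw_i\times\p_x^2w_j$, Neumann data $\p_xw_k|_{\p I}=0$, and initial value $V_k(u_0)$; this is the $\ep=0$ reduction of \eqref{w_k}. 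Since Theorem \ref{thm4} provides a genuinely smooth local solution, every integration by parts below is justified directly, without the auxiliary parabolic $L^2$-regularity step used in Section \ref{s: hig-reg}.

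First I would run the two energy identities. Testing against $w_k$ and integrating by parts using $\p_xw_k|_{\p I}=0$, the term $\<u\times\p_x^2w_k,w_k\>$ reduces to $-\<u_x\times\p_xw_k,w_k\>$ because its leading piece $\<u\times\p_xw_k,\p_xw_k\>$ vanishes by antisymmetry, while $\<w_k\times\p_x^2u,w_k\>\equiv0$; together with $\int|F_k|^2\le C_k(1+\norm{w_k}^2_{L^2})$ this gives $\frac{\p}{\p t}\norm{w_k}^2_{L^2}\le C_k(1+\norm{w_k}^2_{H^1})$. Testing instead against $-\p_x^2w_k$, the crucial cancellation $\<u\times\p_x^2w_k,\p_x^2w_k\>\equiv0$ removes the top-order term, and after one integration by parts the surviving contributions $\<w_k\times\p_x^3u,\p_xw_k\>$ and the $F_k$-terms are controlled by the one-dimensional analogues of Lemma \ref{w_{k-1}-w_k} and Lemma \ref{es-error} together with the induction hypothesis, yielding $\frac{\p}{\p t}\norm{w_k}^2_{H^1}\le C_k(1+\norm{w_k}^2_{H^1})$. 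The structural input throughout is that the highest-order terms are annihilated by $\<a\times b,b\>=0$.

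Next I would integrate the Gronwall inequality. Because it is \emph{linear} in $y=\norm{w_k}^2_{H^1}$ with a coefficient $C_k$ depending only on lower-order norms already bounded by $C_{k-1}(T)$, one gets $\sup_{0<t<T}\norm{w_k}^2_{H^1}\le e^{C_k(T)T}(\norm{V_k(u_0)}^2_{H^1}+1)$, and $\norm{V_k(u_0)}^2_{H^1}\le C(\norm{u_0}^2_{H^{2k+1}})$, so the bound is finite for every finite $T$. I would then recover the intermediate estimates by a downward bootstrap on $n=k-i$: the relation $u_{xx}=-u\times u_t-|u_x|^2u$, differentiated in $t$, expresses $\p_x^2w_i$ through $w_{i+1}$ and products of lower-order quantities, so Lemma \ref{alg} (in its one-dimensional version) and the elliptic estimate of Lemma \ref{eq-norm} upgrade $\sup\norm{w_i}_{H^{2(k-i)+1}}\le C_k(T)$ one order at a time down to $i=1$; finally the same relation with repeated $L^2$-elliptic estimates promotes $u$ from $H^{2k-1}$ to $H^{2k+1}$.

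The main obstacle is, at first sight, the loss of the regularizing term $\ep\int|\p_x^2w_k|^2$ available in Section \ref{s: hig-reg} to absorb top-order expressions. The resolution, and the point that requires care, is that at $\ep=0$ precisely those expressions were themselves carried by a factor $\ep$ and hence vanish, while the genuinely dangerous term $u\times\p_x^2w_k$ is killed when tested against either $w_k$ or $\p_x^2w_k$ by antisymmetry, so no coercivity is actually lost. The remaining delicate point is bookkeeping: one must verify the one-dimensional equivalent-norm estimates controlling $\norm{w_{k-1}}_{H^2}$ and $\norm{w_{k-1}}_{H^3}$ by $\norm{w_k}_{H^1}$ and lower-order data, and confirm that every nonlinear term is genuinely linear in the top-order energy $\norm{w_k}^2_{H^1}$, so that the Gronwall constant stays finite on bounded intervals and yields $C_k(\norm{u_0}^2_{H^{2k+1}},T)<\infty$ for $T<\infty$.
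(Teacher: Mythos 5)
Your proposal is correct and follows essentially the same route as the paper, which itself only indicates that Proposition \ref{es-hig-u} is obtained by "almost the same argument as that in Section \ref{s: hig-reg} with $\ep=0$": induction on $k$ with base case Proposition \ref{H5-es}, the $L^2$ and $H^1$ energy identities for $w_k=\p^k_tu$ in which the top-order term $u\times\p^2_xw_k$ is annihilated by antisymmetry of the cross product, the equivalent-norm and nonlinearity estimates of Lemmas \ref{w_{k-1}-w_k} and \ref{es-error}, a Gronwall inequality linear in $\norm{w_k}^2_{H^1}$, and the downward elliptic bootstrap via $u_{xx}=-u\times u_t-|u_x|^2u$ together with Lemmas \ref{alg} and \ref{eq-norm}. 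Your additional observation that smoothness of $u$ from Theorem \ref{thm4} makes the auxiliary parabolic regularization step of Section \ref{s: hig-reg} unnecessary is accurate.
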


\medskip
\subsection{Global existence result}
Now, we are in position to prove the main result (namely Theorem \ref{thm3}) in this section.
\begin{thm}\label{thm3'}
Suppose that $u_0\in C^\infty (I,\U^2)$, which satisfies the $k$-order compatibility condition defined in \eqref{com-cond3} for any $k\in \mathbb{N}$. Then the initial-Neumann boundary value problem \eqref{eq-ISMF} admits a unique global smooth solution $u$ on $[0,\infty)$.
\end{thm}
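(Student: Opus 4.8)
The plan is to run a standard continuation argument, exploiting that the local existence time in Theorem \ref{thm4} depends only on $\norm{u_0}_{H^5(I)}$ together with the a priori bounds of Proposition \ref{es-hig-u}. First I would invoke Theorem \ref{thm4} to obtain a unique local smooth solution $u$ on a maximal interval $[0,T_{max})$, with $T_{max}$ depending only on $\norm{u_0}_{H^5(I)}$, and suppose for contradiction that $T_{max}<\infty$. The heart of the matter is that on $[0,T_{max})$ every Sobolev norm of $u$ stays finite: by Proposition \ref{es-hig-u}, for each $k\geq 2$ and each $0\leq i\leq k$,
\[
\sup_{0<t<T_{max}}\norm{\p^i_t u}^2_{H^{2(k-i)+1}}\leq C_k(\norm{u_0}^2_{H^{2k+1}},T_{max})<\infty,
\]
because $T_{max}<\infty$. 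In particular $\p_t u\in L^\infty([0,T_{max}),H^s(I))$ for every $s$, so $t\mapsto u(\cdot,t)$ is Lipschitz, hence Cauchy, into each $H^s(I)$ as $t\to T_{max}^-$. Denoting the limit by $u(\cdot,T_{max})$, the uniform bounds and the Sobolev embeddings show that $u$ extends to a map in $C^\infty(I\times[0,T_{max}])$ with $\abs{u(\cdot,T_{max})}=1$.

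Next I would verify that the endpoint data is admissible for a restart. The limiting map $u(\cdot,T_{max})\in C^\infty(I,\U^2)$ inherits the Neumann condition, and, exactly as in the proof of Theorem \ref{thm4} (where one checks that $u(x,T_1)$ satisfies the same compatibility condition as $u_0$), it satisfies the $k$-order compatibility condition \eqref{com-cond3} for every $k$: these conditions are equivalent, via Proposition \ref{intrc-cd2}, to $\frac{\p}{\p\nu}\p^j_t u|_{\p I}=0$, which hold for all $t\in[0,T_{max})$ by differentiating the Neumann condition in $t$, and thus persist at $t=T_{max}$ by continuity. Applying Theorem \ref{thm4} with initial data $u(\cdot,T_{max})$ then produces a smooth solution on $[T_{max},T_{max}+\de)$ for some $\de>0$; since both pieces satisfy the same equation and agree to all orders at $T_{max}$, gluing yields a smooth solution strictly past $T_{max}$, contradicting maximality. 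Hence $T_{max}=\infty$, and uniqueness on $[0,\infty)$ follows from the local uniqueness of Theorem \ref{thm1} (see \cite{CW1}) applied on each finite subinterval.

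The main obstacle is not the continuation mechanism but securing the time-dependent a priori bounds that feed it, namely Proposition \ref{es-hig-u}. Its base case rests on the conservation law \eqref{ineq2} (equivalently \eqref{Key-formu}), which, together with the conserved energy $\int_I\abs{u_x}^2$, converts into the uniform $H^2$-bound \eqref{ineq3}; the induction of Section \ref{s: hig-reg}, run with $\ep=0$, then bootstraps this to all orders. The only point genuinely specific to the present theorem is justifying the $C^\infty$ extension up to the closed endpoint $t=T_{max}$ and the preservation there of the infinitely many compatibility conditions \eqref{com-cond3}, so that the flow may legitimately be restarted; both follow cleanly from the uniform-in-order, finite-in-$T$ bounds above.
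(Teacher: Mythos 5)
Your proposal is correct and follows essentially the same route as the paper: a continuation argument in which the assumed finiteness of $T_{max}$ is combined with the all-order a priori bounds of Proposition \ref{es-hig-u} to extend $u$ smoothly to $I\times[0,T_{max}]$, after which the persistence of the compatibility conditions \eqref{com-cond3} at $t=T_{max}$ allows the flow to be restarted, contradicting maximality. Your write-up merely makes explicit a few points the paper leaves terse (the Lipschitz-in-$t$ extension to the closed endpoint and the continuity argument for the boundary conditions), which is a welcome elaboration rather than a deviation.
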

\begin{proof}
Let $T_{max}$ be the maximal existence time given in Theorem \ref{thm4}. We claim $T_{max}=\infty$.  On the contrary, if $T_{max}<\infty$, then Proposition \ref{es-hig-u} implies that for any $k\in \mathbb{N}$, we have
\[\sup_{0<t<T_{max}}\norm{\p^i_tu}^2_{H^{2k+1-2i}}\leq C_k(\norm{u_0}_{H^{2k+1}}, T_{max})<\infty\]
for $0\leq i\leq k$. This yields that
\[\sup_{0<t<T_{max}}\norm{\p^j_t\p^s_xu}^2_{L^2}<\infty\]
for any $j,s\in \mathbb{N}$.

So, it follows from the Sobolev embedding theorem that
\[u\in C^\infty(I\times[0,T_{max}]),\]
and hence $u(x,T_{max})$ satisfies the same compatibility condition as that for $u_0$ defined in \eqref{com-cond3}. Therefore, $T_{max}$ can be extended. This leads to a contradiction with the definition of $T_{max}$.
\end{proof}

\medskip
\noindent {\it\bf{Acknowledgements}}: The authors thank the referees for their insightful comments, which greatly improved the manuscript. The author B. Chen is supported partially by NSFC (Grant No. 12301074), Guangzhou Basic and Applied Basic Research Foundation (Grant No. 2024A04J3637) and Guangdong Basic and Applied Basic Research Foundation (Grant no. 2025A1515010502), the author Y.D. Wang is supported partially by NSFC (Grant No.12431003).


\medskip

\end{document}